\documentclass[11pt,reqno]{amsart}

\usepackage{hyphenat}
\usepackage[a4paper,margin=31mm]{geometry}
\usepackage[foot]{amsaddr}
\usepackage{amsmath,amssymb,amsthm,mathtools}
\usepackage{microtype}
\usepackage{enumitem}

\usepackage{xcolor}
\usepackage[hypertexnames=false]{hyperref}
\usepackage{bookmark}
\usepackage{aliascnt}
\usepackage{silence}
\mathtoolsset{showonlyrefs}
\numberwithin{equation}{section}
\allowdisplaybreaks
\usepackage{mathrsfs}

\usepackage[section]{placeins}
\usepackage{etoolbox}
\AtBeginEnvironment{thebibliography}{\raggedright}

\hypersetup{
  colorlinks=true,
  linkcolor=blue!55!black,
  citecolor=green!45!black,
  urlcolor=blue!65!black,
  pdftitle={Positivity Rigidity for Grossman--Larson Characters and the Kingman Face of the Hoffman Rooted-Tree Graph},
  pdfauthor={Shengjun Zhang},
  pdfsubject={Positive Grossman--Larson characters, two-sided Pieri states, and the Kingman face},
  pdfkeywords={Grossman--Larson Hopf algebra, basis-positive characters,
two-sided Pieri states, symmetric functions, Kingman paintboxes,
central measures, Martin boundary}
}

\newtheorem{theorem}{Theorem}[section]
\newaliascnt{proposition}{theorem}
\newtheorem{proposition}[proposition]{Proposition}
\aliascntresetthe{proposition}
\newaliascnt{lemma}{theorem}
\newtheorem{lemma}[lemma]{Lemma}
\aliascntresetthe{lemma}
\newaliascnt{corollary}{theorem}
\newtheorem{corollary}[corollary]{Corollary}
\aliascntresetthe{corollary}
\newaliascnt{definition}{theorem}
\newtheorem{definition}[definition]{Definition}
\aliascntresetthe{definition}
\newaliascnt{remark}{theorem}
\newtheorem{remark}[remark]{Remark}
\aliascntresetthe{remark}
\theoremstyle{plain}
\newaliascnt{example}{theorem}
\newtheorem{example}[example]{Example}
\aliascntresetthe{example}

\usepackage{tikz}

\tikzset{
  rtreepic/.style={baseline=-0.6ex,scale=.42,every node/.style={circle,fill,inner sep=1.15pt},every path/.style={line width=.42pt}}
}
\newcommand{\TreeTauPic}{\begin{tikzpicture}[rtreepic]
  \node (a) at (0,0) {}; \node (b) at (0,-1) {}; \draw (a)--(b);
\end{tikzpicture}}
\newcommand{\TreeCthreePic}{\begin{tikzpicture}[rtreepic]
  \node (a) at (0,0) {}; \node (b) at (0,-1) {}; \node (c) at (0,-2) {}; \draw (a)--(b)--(c);
\end{tikzpicture}}
\newcommand{\TreeCfourPic}{\begin{tikzpicture}[rtreepic]
  \node (a) at (0,0) {}; \node (b) at (0,-.8) {}; \node (c) at (0,-1.6) {}; \node (d) at (0,-2.4) {}; \draw (a)--(b)--(c)--(d);
\end{tikzpicture}}
\newcommand{\TreeYPic}{\begin{tikzpicture}[rtreepic]
  \node (a) at (0,0) {}; \node (b) at (0,-.9) {}; \node (c) at (-.5,-1.8) {}; \node (d) at (.5,-1.8) {}; \draw (a)--(b)--(c); \draw (b)--(d);
\end{tikzpicture}}
\newcommand{\TreeBPic}{\begin{tikzpicture}[rtreepic]
  \node (a) at (0,0) {}; \node (b) at (-.45,-.9) {}; \node (c) at (.45,-.9) {}; \node (d) at (.45,-1.8) {}; \draw (a)--(b); \draw (a)--(c)--(d);
\end{tikzpicture}}
\newcommand{\T}{\mathcal T}
\newcommand{\GL}{\mathcal G}
\newcommand{\Ipf}{\mathcal I_{\mathrm{pf}}}
\newcommand{\GLpf}{\mathcal G_{\mathrm{pf}}}
\newcommand{\Aut}{\operatorname{Aut}}
\newcommand{\Leaves}{\operatorname{Leaves}}
\newcommand{\Prob}{\mathbb P}
\newcommand{\E}{\mathbb E}
\newcommand{\N}{\mathbb N}
\newcommand{\R}{\mathbb R}
\newcommand{\one}{{\mathord{\bullet}}}
\newcommand{\Span}{\operatorname{span}}
\newcommand{\QSL}{\operatorname{QSL}}
\newcommand{\SL}{\operatorname{SL}}
\newcommand{\King}{\nabla_\infty}
\newcommand{\Tcen}{\mathcal T_{\mathrm{cen}}}

\title[Grossman--Larson characters and the Kingman face]
{Positivity Rigidity for Grossman--Larson Characters and the Kingman Face of the Hoffman Rooted-Tree Graph}
\address{Université Paris-Saclay, Faculté des Sciences d’Orsay, Institut de mathématiques d’Orsay, Bâtiment 307, F-91405 Orsay, France}
\email{zhang.shengjun@universite-paris-saclay.fr}
\author{Shengjun Zhang}
\subjclass[2020]{Primary 16T30; Secondary 05E05, 60G09, 60J50, 05C05}
\keywords{Grossman--Larson Hopf algebra, basis-positive characters,
two-sided Pieri states, symmetric functions, Kingman paintboxes,
central measures, Martin boundary}

\begin{document}
\raggedbottom

\begin{abstract}
We classify the real characters of the Grossman--Larson Hopf algebra that are
nonnegative on the rooted-tree basis. They vanish on trees with branching
away from the root, and the normalized characters are parametrized by Kingman
paintboxes. Two-sided Pieri states are mixtures of the normalized
characters, with unique mixing measures. The corresponding laws 
form a proper exposed Bauer face of the simplex of
central measures on the Hoffman rooted-tree graph. For the path-forest subgraph, 
the full and minimal Martin boundaries coincide
and are homeomorphic to the Kingman simplex. The limiting ranked root-branch
frequencies generate the completed central tail.
\end{abstract}

\maketitle
\setcounter{tocdepth}{1}
\tableofcontents

\section{Introduction}\label{sec:introduction}

The Grossman--Larson product combines rooted trees by grafting all root
branches of one tree onto vertices of another. Its structure constants are
nonnegative integers, and left multiplication by the two-vertex chain is the
leaf-growth operator of the Hoffman graph. Consequently, a character that is
nonnegative on the tree basis and takes the value one on this chain defines a
central path measure. We classify the measures obtained in this way and
characterize their mixtures through a two-sided Pieri identity.

Let \(\GL\) be the real Grossman--Larson Hopf algebra, with basis
\(F_t\) indexed by finite unlabelled non-plane rooted trees. Write
\(C_m\) for the chain with \(m\) vertices, with
\(C_1=\one\) and \(C_2=\tau\). Let \(n(t;T)\) be the number
of vertices of \(t\) at which attaching a leaf produces \(T\), and
write \(t\nearrow T\) when \(n(t;T)>0\). We use the convention that
the root of the left factor is removed and its branches are grafted onto the
right factor. Then
\begin{equation}\label{eq:intro-Pieri}
 F_\tau F_t=\sum_{T:t\nearrow T}n(t;T)F_T.
\end{equation}

By a character we mean a multiplicative real linear functional,
with the zero map allowed. 
A linear functional is \emph{basis-positive} if it is nonnegative on every
\(F_t\). A normalized positive left Pieri state is a basis-positive linear
functional \(L\) satisfying \(L(F_\one)=1\) and
\(L(F_\tau x)=L(x)\) for all \(x\in\GL\). Such states correspond to
central measures on the Hoffman graph: finite weighted paths with the same
endpoint have equal cylinder probabilities. We denote the central law of
\(L\) by \(\mu_L\). The normalized basis-positive characters are the
multiplicative left Pieri states.

The proof of rigidity begins with the smallest tree having a branching
vertex away from the root. Let \(Y\) have a root with one child, which in
turn has two leaf children. Direct grafting gives
\begin{equation}\label{eq:intro-commutator}
 F_\tau F_{C_3}-F_{C_3}F_\tau=F_Y.
\end{equation}
Every real character sends this commutator to zero. Basis positivity then
propagates the conclusion: if a product of tree basis vectors has value zero,
every basis vector appearing with nonzero coefficient also has value zero.
We show that, starting from \(Y\), every tree with branching away from the
root is reached in at most two such product steps.

A \emph{path-forest tree} is a rooted tree whose non-root vertices have at
most one child. Write \(P_\lambda\) for the tree with chain branches of
sizes given by an integer partition \(\lambda\): a part \(\lambda_i\)
corresponds to a branch with \(\lambda_i\) vertices.
Figure~\ref{fig:path-forest-trees} illustrates the definition.

\begin{figure}[htbp]
\centering
\begin{tikzpicture}[
  x=1cm,y=1cm,
  line width=.45pt,
  font=\small,
  pf vertex/.style={
    circle,fill=black,inner sep=0pt,minimum size=4pt
  },
  pf root/.style={pf vertex,minimum size=5pt}
]
  % A path-forest tree with branch sizes 3, 2, 2, 1.
  \begin{scope}
    \node[pf root,label=above:{root}] (r) at (0,0) {};

    \foreach \name/\x/\y in {
      a1/-1.5/-.6, a2/-1.5/-1.2, a3/-1.5/-1.8,
      b1/-.5/-.6,  b2/-.5/-1.2,
      c1/.5/-.6,   c2/.5/-1.2,
      d1/1.5/-.6
    }{
      \node[pf vertex] (\name) at (\x,\y) {};
    }

    \draw (r)--(a1)--(a2)--(a3)
          (r)--(b1)--(b2)
          (r)--(c1)--(c2)
          (r)--(d1);

    \foreach \x/\size in {-1.5/3,-.5/2,.5/2,1.5/1}{
      \node at (\x,-2.12) {\(\size\)};
    }

    \node at (0,-2.65) {\(P_{(3,2,2,1)}\)};
  \end{scope}

  % Non-root branching is excluded.
  \begin{scope}[xshift=4.6cm]
    \node[pf root,label=above:{root}] (s) at (0,0) {};
    \node[pf vertex] (v) at (0,-.6) {};
    \node[pf vertex] (u) at (-.5,-1.2) {};
    \node[pf vertex] (w) at (.5,-1.2) {};

    \draw (s)--(v)--(u) (v)--(w);
    \draw (v) circle[radius=3.4pt];

    \node[anchor=west,font=\scriptsize] at (.28,-.6)
      {two children};

    \node at (0,-2.65) {\(Y\) (not path-forest)};
  \end{scope}
\end{tikzpicture}

\caption{A path-forest tree \(P_{(3,2,2,1)}\) and the tree \(Y\).
Roots are at the top. The four numbers give the branch sizes, excluding the root.
The branches are unordered. The circled non-root vertex
of \(Y\) has two children.}
\label{fig:path-forest-trees}
\end{figure}

Let \(\Ipf\) be the span of the basis vectors \(F_t\) for which
\(t\) is not path-forest. Set
\(a_\lambda=\prod_{j\ge1}r_j(\lambda)!\), where \(r_j(\lambda)\) is
the multiplicity of the part \(j\). Hoffman's Hopf morphism
\(\mathsf Q:\GL\to\Lambda\) to the Hopf algebra of symmetric functions sends
\(F_{P_\lambda}\) to the augmented monomial
\(\widetilde m_\lambda=a_\lambda m_\lambda\) and vanishes on
\(\Ipf\) \cite[Equation~(4.2)]{hoffman2009rootedsymmetric}.
Here \(m_\lambda\) is the monomial symmetric function. The morphism induces
an isomorphism \(\GL/\Ipf\cong\Lambda\), and we prove that every
basis-positive character factors through this quotient.

Let
\[
 \King=\left\{p_1\ge p_2\ge\cdots\ge0:\ \sum_i p_i\le1\right\}
\]
carry the product topology. The missing mass
\(p_0=1-\sum_i p_i\) is called the dust mass. For a compact metrizable
space \(E\), we write \(\mathcal P(E)\) for its Borel probability
measures with the weak topology. With \(\mathsf p_k\) denoting the
power-sum symmetric functions, let \(\chi_p:\Lambda\to\R\) be the
unital algebra homomorphism determined by
\(\chi_p(\mathsf p_1)=1\) and
\(\chi_p(\mathsf p_k)=\sum_i p_i^k\) for \(k\ge2\).

\noindent\textbf{Theorem A (positive characters).}
Every basis-positive character of \(\GL\) vanishes on \(\Ipf\).
The normalized characters are uniquely parametrized by \(p\in\King\):
\(\psi_p=\chi_p\circ\mathsf Q\), with
\[
 \psi_p(F_{P_\lambda})=\chi_p(\widetilde m_\lambda),
 \qquad
 \psi_p(F_t)=0\quad\text{if }t\text{ is not path-forest}.
\]
The parametrization is a homeomorphism for pointwise convergence on the tree
basis.

More generally, apart from the zero map and the counit \(\varepsilon\),
every basis-positive character has the unique form
\[
 \psi(F_t)=c^{|t|-1}\psi_p(F_t),
 \qquad c=\psi(F_\tau)>0,\quad p\in\King.
\]
Here \(\varepsilon(F_t)=\mathbf1_{\{t=\one\}}\).

\medskip

For a positive left Pieri state, vanishing on \(\Ipf\) can be
tested through the right Pieri identity on chains. As the chain length
varies, the commutators contain trees with branching at every positive
depth. The left Pieri identity and positivity propagate their vanishing. 

\noindent\textbf{Theorem B (chain tests and the Kingman face).}
For a normalized positive left Pieri state \(L\), the chain identities
\[
 L(F_{C_m}F_\tau)=L(F_{C_m})\qquad(m\ge3)
\]
are equivalent to the full right Pieri identity
\(L(xF_\tau)=L(x)\) for every \(x\in\GL\), to the trace identity
\(L(xy)=L(yx)\) for every \(x,y\in\GL\), and to
\(L(\Ipf)=0\). These conditions hold if and only if there is a unique
\(\rho\in\mathcal P(\King)\) such that
\[
 L(x)=\int_{\King}\psi_p(x)\,\rho(dp)\qquad(x\in\GL).
\]
The central laws associated with these equivalent conditions are exactly
those supported on the path-forest subgraph. They form a proper exposed Bauer
face \(\mathscr C_{\mathrm{pf}}\), affinely homeomorphic to
\(\mathcal P(\King)\), whose extreme points are the laws
\(\mu^p:=\mu_{\psi_p}\). An explicit positive weighted sum of the chain
commutator values exposes this face.

Moreover, for each \(m\ge3\), there exists a normalized positive left Pieri state
satisfying all the other chain identities but not the identity at level \(m\).

\medskip
The path-forest subgraph is related to the classical Kingman graph by a
diagonal change of edge weights. We describe its Martin kernels in the
Hoffman normalization and identify its full and minimal Martin boundaries with
\(\King\). Under every law in \(\mathscr C_{\mathrm{pf}}\), the ranked
root-branch frequencies converge almost surely. Their limit is the paintbox
mixing parameter; it generates the completed central tail and determines the
unique decomposition into extreme central laws.

A unital basis-positive character \(\psi\) of arbitrary mass is
parametrized by its total mass \(c=\psi(F_\tau)\) and ranked atom masses
\(q_1\ge q_2\ge\cdots\ge0\), subject to \(\sum_iq_i\le c\).
Under Hopf convolution, these characters form a commutative monoid:
positive atom masses combine by multiset union and dust masses add.
In this monoid, an \(n\)-th convolution root, \(n\ge2\), exists if and
only if every positive atom mass has multiplicity divisible by \(n\),
and is then unique. The infinitely divisible elements are the pure-dust
characters, and every convolution semigroup has pure dust with mass
linear in time.

\medskip
\noindent\textbf{Background and organization.}
The rooted-tree algebra and growth multiplicities are due to
Grossman--Larson and Hoffman
\cite{grossman1989hopf,hoffman2003combinatorics}; see
\cite{oudom2008enveloping} for the enveloping-algebra construction and
\cite{zhao2008noncommutative,hoffman2009rootedsymmetric} for the related
symmetric-function morphisms. Kingman's paintbox representation and its
partition-probability form are developed in
\cite{kingman1978representation,pitman1995exchangeable,pitman2006combinatorial}.
The commutative boundary and its moment coordinates are described in
\cite{borodin2000harmonic,petrov2009kingman}, and the fixed-cotransition
approach in \cite{dynkin1978sufficient,vershik2015equipped}.

In a previous paper \cite{zhang2026recursivepaintboxesmartinboundary}, we determined the full
Martin boundary of the Hoffman graph in terms of recursive paintboxes and
obtained the canonical decomposition of all central laws. Here we identify
the Kingman face by algebraic conditions: basis-positive characters vanish
on non-path-forest trees, and two-sided Pieri invariance characterizes their
mixtures. The proofs use grafting identities and Kingman's paintbox representation. On
the path-forest subgraph, we give direct formulas for the Martin kernels
and recover the tail parameter from root-branch frequencies.

Section~\ref{sec:rooted} sets up the algebra and central-measure conventions.
Section~\ref{sec:path-forest-quotient} proves positivity rigidity, describes
Hoffman's quotient, and develops the chain tests and their independence.
Section~\ref{sec:kingman-boundary-characters} classifies positive
characters and two-sided Pieri states. Section~\ref{sec:kingman-face}
describes the exposed face and the Martin boundary,
Section~\ref{sec:central-tail} identifies the tail variable, and
Section~\ref{sec:finite-mass-convolution} treats finite mass and convolution.

\section{The rooted-tree algebra and central measures on the Hoffman graph}
\label{sec:rooted}

We use the rooted-tree conventions of
\cite[Section~2]{zhang2026recursivepaintboxesmartinboundary} and recall the central-measure
correspondence in the Hoffman normalization.

Throughout, \(\N=\{1,2,\ldots\}\), \([n]=\{1,\ldots,n\}\), and
\([0]=\varnothing\). We write \(\lambda\vdash N\) for an integer
partition of size \(|\lambda|=\sum_i\lambda_i=N\), with positive parts
in nonincreasing order and zero padding beyond its length. The empty
partition is \(\varnothing\), and \(a_\varnothing=1\).
The symbols \(\delta_x\), \(\delta_{ab}\), and \(\mathbf1_A\)
denote a point mass, the Kronecker delta, and an indicator, respectively.
Also, \(\mathcal L_\mu(Z)\) denotes the law of \(Z\) under \(\mu\), with
the subscript omitted when the underlying measure is understood.
For a compact space \(E\), \(C(E)\) denotes the real continuous
functions with the supremum norm.

\subsection{Rooted trees and the Hopf algebra}
\label{subsec:GL-product}

Let \(\T_n\) be the finite set of unlabelled non-plane rooted trees with
\(n\) vertices, and put \(\T=\bigsqcup_{n\ge1}\T_n\). For
\(t\in\T\), write \(|t|\) for its size, \(V(t)\) for the vertices of
a rooted representative, \(\Aut(t)\) for its rooted automorphism group,
\(\Leaves(t)\) for its leaves,
and \(t_v\) for the descendant subtree at \(v\), including \(v\).
The depth of a vertex is its distance from the root, measured in edges;
a branching vertex has at least two children.
Let \(C_m\) be the chain with \(m\) vertices, so
\(C_1=\one\) and \(C_2=\tau\).

A rooted forest is a finite unordered multiset of rooted trees. Deleting the
root of \(s\) gives a forest \(B_-(s)=s_1\cdots s_k\); the branches
are treated as distinct occurrences even when their shapes agree. Conversely,
\(B_+\) adjoins a new root to a forest, with
\(B_+(\varnothing)=\one\). Forest juxtaposition means unordered union.

The Grossman--Larson algebra \(\GL\) has basis \(F_t\), \(t\in\T\),
and product
\begin{equation}\label{eq:GL-product}
 F_sF_t=\sum_{f:[k]\to V(t)}F_{t\circ_f B_-(s)},
 \qquad B_-(s)=s_1\cdots s_k.
\end{equation}
Here \(t\circ_f B_-(s)\) is obtained by attaching the root of each
\(s_i\) to \(f(i)\). The resulting tree has the root of \(t\), and
all grafting targets belong to \(t\). Reordering branch occurrences or
changing rooted representatives induces bijections between the grafting
maps, so the coefficients are well-defined nonnegative integers. The product
is associative, has unit \(F_\one\), and is graded by
\(\deg F_t=|t|-1\)
\cite{grossman1989hopf,hoffman2003combinatorics}.

For \(t=B_+(t_1\cdots t_k)\) and \(I\subseteq[k]\), let
\(\mathfrak f_I=\prod_{i\in I}t_i\). The standard Hopf structure is
\begin{equation}\label{eq:GL-coproduct}
 \Delta(F_t)=\sum_{I\sqcup J=[k]}
 F_{B_+(\mathfrak f_I)}\otimes F_{B_+(\mathfrak f_J)},
 \qquad \varepsilon(F_t)=\mathbf1_{\{t=\one\}},
\end{equation}
where the bipartitions are ordered. This makes \(\GL\) connected,
graded, and cocommutative. In particular, its antipode \(S\) is determined
recursively in positive degree by
\begin{equation}\label{eq:GL-antipode-recursion}
 S(x)=-x-\sum S(x')x'',
 \qquad
 \Delta x=x\otimes F_\one+F_\one\otimes x+\sum x'\otimes x''.
\end{equation}
For linear functionals \(f,g:\GL\to\R\), their Hopf convolution is
\((f\star g)(x)=(f\otimes g)(\Delta x)\), with identity \(\varepsilon\).

A \emph{character} is a multiplicative linear functional
\(\psi:\GL\to\R\). The only nonunital character is the zero map:
\(\psi(F_\one)\) is an idempotent scalar and
\(\psi(x)=\psi(F_\one)\psi(x)\). A character is \emph{normalized}
when \(\psi(F_\tau)=1\), which implies unitality. The basis cone is
\[
 \GL_+=\left\{\sum_t a_tF_t:a_t\ge0,
                   \text{ with finite support}\right\}.
\]
A linear functional is basis-positive when it is nonnegative on this cone.
All positivity assertions below refer to this basis cone.

\subsection{Growth multiplicities and quotient histories}

For \(s\in\T\) and \(v\in V(s)\), let \(s^{+v}\) be obtained by
attaching a leaf to \(v\). Write \(s\nearrow t\) if
\(s^{+v}\cong t\) for some \(v\), and define
\[
 n(s;t)=\#\{v:s^{+v}\cong t\},\qquad
 m(s;t)=\#\{\ell\in\Leaves(t):t-\ell\cong s\}.
\]
The Hoffman graph \(\Gamma\) has edge multiplicities \(n(s;t)\).
A tree of size \(n\) is at graph level \(n-1\); endpoints and marginals
will always be indexed by tree size.

Let \(\SL(t)\) be the bijective labellings of a fixed representative of
\(t\) by \(\{0,\ldots,|t|-1\}\) that increase along every path away
from the root, and put
\(\QSL(t)=\SL(t)/\Aut(t)\). The automorphism action is free. If
\(d(t)=|\SL(t)|\) and \(u(t)=|\QSL(t)|\), the rooted-tree hook formula
and Hoffman's symmetry relation give
\begin{equation}\label{eq:rooted-hook}
 d(t)=\frac{|t|!}{\prod_{v\in V(t)}|t_v|},
 \qquad u(t)=\frac{d(t)}{|\Aut(t)|},
 \qquad |\Aut(s)|m(s;t)=n(s;t)|\Aut(t)|.
\end{equation}
For the hook formula, see
\cite{bergeron1992varieties,stanley2012enumerative}; for the symmetry
relation, see \cite[Proposition~2.1]{hoffman2003combinatorics}. The latter identity
also follows by counting rooted isomorphisms \(s^{+v}\to t\), first by
\(v\) and then by the image of the new leaf. Deleting the largest label
gives \(d(t)=\sum_{s\nearrow t}m(s;t)d(s)\), whence
\begin{equation}\label{eq:Hoffman-dimension-recursion}
 u(t)=\sum_{s\nearrow t}n(s;t)u(s),\qquad u(\one)=1.
\end{equation}
Thus \(u(t)>0\) is the weighted path dimension of \(t\), in agreement
with \cite[Proposition~2.6]{hoffman2003combinatorics}.

The single branch of \(\tau\) is a single vertex. Consequently,
\begin{equation}\label{eq:pieri-n}
 F_\tau F_t=\sum_{v\in V(t)}F_{t^{+v}}
           =\sum_{T:t\nearrow T}n(t;T)F_T.
\end{equation}
Iterating from the unit gives
\begin{equation}\label{eq:iterated-Pieri}
 F_\tau^m=\sum_{t\in\T_{m+1}}u(t)F_t,
 \qquad m\ge0.
\end{equation}

Replace each weighted edge by a set of \(n(s;t)\) edge copies. A compatible
identification of weighted paths with quotient labellings will be useful.

\begin{lemma}[Weighted paths and quotient labellings]
\label{lem:path-quotient-labelling-bijection}
Weighted paths from \(\one\) to \(t\) can be identified with
\(\QSL(t)\), compatibly with deletion of the largest label. Any two such compatible identifications are related by an
endpoint-preserving homeomorphism that preserves every central path law.
\end{lemma}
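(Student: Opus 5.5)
We build the identification by induction on tree size, using the dimension recursion \eqref{eq:Hoffman-dimension-recursion}. For \(t=\one\) both sides are singletons. Suppose a bijection \(\Phi_s\) between weighted paths from \(\one\) to \(s\) and \(\QSL(s)\) is fixed for every \(s\) with \(|s|<|t|\). A weighted path to \(t\) is a weighted path to some \(s\nearrow t\) followed by one of the \(n(s;t)\) edge copies from \(s\) to \(t\). On the other side, deleting the largest label \(|t|-1\) from a labelling in \(\SL(t)\) removes a leaf \(\ell\) and gives an increasing labelling of \(t-\ell\cong s\). This map passes to \(\Aut\)-orbits. So \(\QSL(t)\) is fibred over \(\bigsqcup_{s\nearrow t}\QSL(s)\), and it remains to show that each fibre has exactly \(n(s;t)\) elements. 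Once that is known, we fix for each pair \((s,t)\) and each \(\Aut\)-orbit a bijection between the edge copies and the fibre. Combined with \(\Phi_s\), this defines \(\Phi_t\), and by construction \(\Phi_t\) commutes with deletion of the largest label.

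The key step is the fibre count. Fix an increasing labelling \(\sigma\) of a representative of \(s\), with class \([\sigma]\in\QSL(s)\). Its lifts to \(\SL(t)\) modulo \(\Aut(t)\) correspond to choices of a vertex \(v\in V(s)\) with \(s^{+v}\cong t\): put the new label \(|t|-1\) on the leaf attached at \(v\). The resulting class in \(\QSL(t)\) depends only on \(v\), because \(\sigma\) is fixed and the action of \(\Aut(s)\) on \(\SL(s)\) is free. Distinct \(v\) give distinct classes. Indeed, an automorphism of \(t\) carrying one lift to another preserves all labels, so it fixes every labelled vertex, hence every vertex, and is the identity. So the fibre over \([\sigma]\) has exactly \(n(s;t)\) elements. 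This also re-derives \eqref{eq:Hoffman-dimension-recursion} as \(u(t)=\sum_s n(s;t)u(s)\). I expect this freeness and well-definedness bookkeeping, independent of the chosen representative \(\sigma\), to be the main point requiring care.

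For the uniqueness statement, let \(\Phi,\Phi'\) be two compatible identifications. Then \(\Phi'\circ\Phi^{-1}\) is a bijection of the weighted paths ending at each \(t\) that commutes with truncation, so it extends to a bijection \(h\) of infinite weighted paths. At every finite level, \(h\) permutes the finite paths with a given endpoint. The path space carries the product of discrete topologies, and \(h\) and \(h^{-1}\) are determined levelwise, so \(h\) is a homeomorphism and it preserves endpoints. A central law gives equal mass to all cylinders of finite paths with the same endpoint. Because \(h\) maps each such cylinder to a cylinder with the same endpoint, it preserves every central law on cylinders. By the \(\pi\)-\(\lambda\) theorem it then preserves the law on the whole Borel \(\sigma\)-algebra.
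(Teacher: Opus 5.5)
Your proof is correct and takes essentially the same route as the paper. You count the restriction fibre over a fixed representative labelling as \(n(s;t)\), using that a label-preserving isomorphism fixes every vertex, and you match fibres with edge copies to build the compatible identification; two choices then differ by compatible endpoint-preserving permutations, which induce a homeomorphism preserving central laws. One small slip: the induced permutation of paths is \((\Phi')^{-1}\circ\Phi\), not \(\Phi'\circ\Phi^{-1}\).
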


\begin{proof}
Fix a quotient labelling of \(s\) and one representative labelling \(L\).
For a cover \(s\nearrow t\), attach the new largest label at a vertex in
\(E(s,t)=\{v:s^{+v}\cong t\}\). The resulting quotient labellings of
\(t\) are distinct: any labelled isomorphism between two such extensions restricts,
after deletion of the new leaf, to an automorphism fixing \(L\), hence to
the identity. Conversely every quotient labelling whose restriction is the
chosen class arises this way. The restriction fibre therefore has
\(n(s;t)\) elements.

Choose a bijection from each such fibre to the fixed edge-copy set.
Successive deletion constructs the required compatible bijections. Two
choices differ by compatible endpoint-preserving permutations of the finite
paths, hence induce a homeomorphism of the infinite path space. A central
law assigns the same mass to all cylinders with a given endpoint, so these
permutations preserve that law.
\end{proof}

\subsection{Central measures and left Pieri states}
\label{subsec:central-measures}

Let \(\mathcal E_n\) be the finite set of edge copies from
\(\T_n\) to \(\T_{n+1}\), with source and range maps
\(\mathrm s,\mathrm r\). The weighted path space is
\[
 \mathcal X_\Gamma=
 \{(e_n)_{n\ge1}:\mathrm s(e_1)=\one,
                 \ \mathrm r(e_n)=\mathrm s(e_{n+1})\}.
\]
It is a closed subspace of \(\prod_{n\ge1}\mathcal E_n\), hence compact
metrizable. Write \(\mathsf E_n\) for the edge coordinates and
\(X_n\in\T_n\) for the endpoint coordinates. For a finite path
\(\gamma\) ending at \(t\in\T_n\), let \(C_\gamma\) be its prefix
cylinder. These clopen cylinders generate the Borel sigma-field and determine
weak convergence of probability measures, since their finite linear
combinations are uniformly dense in \(C(\mathcal X_\Gamma)\).

A probability measure \(\mu\) on \(\mathcal X_\Gamma\) is
\emph{central} when
\begin{equation}\label{eq:central-cylinder-followup}
 \mu(C_\gamma)=\frac{M_n^\mu(t)}{u(t)},
 \qquad M_n^\mu(t)=\mu\{X_n=t\}.
\end{equation}
Equivalently, prefixes are uniform conditional on their endpoints. Its
endpoint laws have canonical cotransitions
\begin{equation}\label{eq:canonical-cotransition-followup}
 p^\downarrow(T,t)=\frac{n(t;T)u(t)}{u(T)},
 \qquad
 M_n^\mu(t)=\sum_{T:t\nearrow T}M_{n+1}^\mu(T)p^\downarrow(T,t).
\end{equation}
A coherent family of endpoint laws conversely defines a unique central law
by \eqref{eq:central-cylinder-followup}, since coherence makes the
prefix-cylinder probabilities consistent. Thus central measures are
in affine bijection with normalized nonnegative harmonic functions
\begin{equation}\label{eq:harmonic-general}
 h(\one)=1,\qquad
 h(t)=\sum_{T:t\nearrow T}n(t;T)h(T),\qquad h(t)\ge0,
\end{equation}
through \(M_n(t)=u(t)h(t)\). Their set \(\mathscr C(\Gamma)\) is
compact convex: the central identities are closed affine conditions on
\(\mathcal P(\mathcal X_\Gamma)\). This is the fixed-cotransition
framework of \cite{dynkin1978sufficient,vershik2015equipped}.

\begin{definition}[Pieri states]
\label{def:Pieri-states}
A \emph{normalized positive left Pieri state} is a linear functional
\(L:\GL\to\R\) satisfying
\begin{equation}\label{eq:left-Pieri-state}
 L(F_\one)=1,\qquad L(F_t)\ge0\ (t\in\T),\qquad
 L(F_\tau x)=L(x)\ (x\in\GL).
\end{equation}
We call such a functional a \emph{left Pieri state} for short. It is
\emph{two-sided} if also \(L(xF_\tau)=L(x)\) for every \(x\in\GL\),
and is a \emph{trace} if \(L(xy)=L(yx)\) for every \(x,y\in\GL\).
\end{definition}

Equip the state space with pointwise convergence on the tree basis.
By \eqref{eq:pieri-n}, a left Pieri state is the linear extension of a
harmonic function \(h(t)=L(F_t)\). Hence these states are affinely
homeomorphic to \(\mathscr C(\Gamma)\), with
\begin{equation}\label{eq:left-Pieri-central-map}
 \mu_L(C_\gamma)=L(F_t),\qquad
 M_n^L(t)=u(t)L(F_t)\quad(t\in\T_n).
\end{equation}
Iterating the left Pieri identity gives \(L(F_\tau^m)=1\) for
\(m\ge0\). Equation \eqref{eq:iterated-Pieri} then yields
\(\sum_{t\in\T_{m+1}}u(t)L(F_t)=1\), hence
\(0\le L(F_t)\le u(t)^{-1}\) for every \(t\in\T\).
In particular, every normalized basis-positive character defines a
central measure.

\begin{remark}[Classification of left Pieri states]
\label{rem:left-Pieri-classification}
Let \(\partial_{\mathrm{RP}}\) be the compact space of deterministic
recursive paintboxes modulo equality of all finite rooted-tree sampling
laws, equipped with the topology of finite sampling laws. For
\(\xi\in\partial_{\mathrm{RP}}\), write \(M_n^\xi\) for its
\(n\)-vertex sampling law. Combining
\eqref{eq:left-Pieri-central-map} with
\cite[Theorem~A and Corollary~B]{zhang2026recursivepaintboxesmartinboundary} gives an
affine homeomorphism from \(\mathcal P(\partial_{\mathrm{RP}})\)
onto the space of left Pieri states:
\begin{equation}\label{eq:left-Pieri-boundary-mixture}
 L_\rho(F_t)
 =\int_{\partial_{\mathrm{RP}}}
   \frac{M_{|t|}^\xi(t)}{u(t)}\,\rho(d\xi),
 \qquad t\in\T.
\end{equation}
The representing measure \(\rho\) is unique. Thus the left Pieri
states form a Bauer simplex, with extreme points
\(L_{\delta_\xi}\), \(\xi\in\partial_{\mathrm{RP}}\).

The Kingman subboundary consists of recursive paintboxes with an
arbitrary root split and with each positive-mass root branch directed
by the boundary point whose \(n\)-vertex sampling law is
\(\delta_{C_n}\) for every \(n\ge1\).
Theorem~\ref{thm:two-sided-Pieri-characterization} identifies the
two-sided states with mixtures supported on this subboundary.
Extreme left Pieri states need not be characters; see
Proposition~\ref{prop:extremal-non-character}.
\end{remark}

\subsection{The central tail}

Let \(\mathscr F_n^0=\sigma(\mathsf E_1,\ldots,\mathsf E_{n-1})\)
be the sigma-field of prefixes up to size \(n\), with \(\mathscr F_1^0\)
trivial, and define the future and raw central tail fields by
\[
 \mathscr G_N^0=\sigma(X_N,\mathsf E_N,\mathsf E_{N+1},\ldots),
 \qquad \Tcen^0=\bigcap_{N\ge1}\mathscr G_N^0.
\]
For a law \(\mu\), let \(\mathcal N_\mu\) consist of all subsets
of \(\mu\)-null Borel sets, and write
\(\overline{\mathscr H}^{\,\mu}=\sigma(\mathscr H\cup\mathcal N_\mu)\)
for a Borel sub-sigma-field \(\mathscr H\). Set
\(\Tcen=\overline{\Tcen^0}^{\,\mu}\). A nonzero nonnegative harmonic
function \(h\) spans a \emph{minimal ray} if every harmonic \(g\) with
\(0\le g\le h\) is a scalar multiple of \(h\). The standard
fixed-cotransition extremality criterion \cite{dynkin1978sufficient}
takes the following form.

\begin{proposition}[Central extremality and the tail]
\label{prop:central-tail-extreme}
A central law is extreme in \(\mathscr C(\Gamma)\) if and only if its
raw central tail, equivalently its completed central tail, is trivial. Under
\eqref{eq:harmonic-general}, this is also equivalent to the associated
harmonic function spanning a minimal ray of the nonnegative harmonic cone.
\end{proposition}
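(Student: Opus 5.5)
The plan is to prove the two equivalences separately and in the standard fixed-cotransition way. We work with the conditional structure of central laws: for \(\mu\in\mathscr C(\Gamma)\) and \(N\ge1\), centrality says that, given \(\mathscr G_N^0\), the prefix \((\mathsf E_1,\ldots,\mathsf E_{N-1})\) is uniform over the \(u(X_N)\) weighted paths ending at \(X_N\). Hence the conditional law of the past given the future depends only on \(X_N\), and it is the same kernel for every central law. Combined with Lemma~\ref{lem:path-quotient-labelling-bijection}, this means every central law is determined by its endpoint marginals, and the conditional laws of cylinders given the future are universal.

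\textbf{Tail triviality implies extremality.} First we show that every \(A\in\Tcen^0\) with \(\mu(A)>0\) yields a central law \(\mu(\cdot\mid A)\). For a finite path \(\gamma\) ending at \(t\in\T_n\), we have \(A\in\mathscr G_n^0\). The universal conditional law then gives
\[
\mu(C_\gamma\cap A)=\E_\mu\bigl[\mathbf1_A\,\mathbf1_{\{X_n=t\}}\bigr]/u(t),
\]
so \(\mu(\cdot\mid A)\) is central. If \(0<\mu(A)<1\), then \(\mu=\mu(A)\mu(\cdot\mid A)+\mu(A^c)\mu(\cdot\mid A^c)\). These two laws differ because they disagree on \(A\), so \(\mu\) is not extreme. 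Contrapositively, extremality implies that the raw tail is trivial. Raw and completed tail triviality are equivalent, because completion only adds sets of measure \(0\) or \(1\) up to null modification.

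\textbf{Extremality implies tail triviality.} Suppose \(\mu=a\mu_1+(1-a)\mu_2\) with \(\mu_i\) central and \(0<a<1\). Then \(\mu_1\ll\mu\), and we need to show the density \(D=d\mu_1/d\mu\) is \(\Tcen^0\)-measurable modulo \(\mu\). On \(\mathscr F_{N+1}^0\vee\sigma(X_N)\) the density is
\[
D_N=\frac{M_N^{\mu_1}(X_N)}{M_N^{\mu}(X_N)},
\]
since both laws share the same uniform prefix kernel. In fact \(D_N\) is the density on \(\mathscr G_N^0\) as well: any event of \(\mathscr G_N^0\) is determined by \(X_N\) and the future edges, and the future transition kernels coincide given \(X_N\). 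Indeed, for a central law the forward transition from a cylinder depends only on the endpoint, namely \(M_{n+1}(T)u(t)/(u(T)M_n(t))\) per edge copy. These kernels are not universal, so the cleaner route is reverse martingale convergence. The densities \(\E_\mu[D\mid\mathscr G_N^0]\) converge \(\mu\)-a.s. to \(\E_\mu[D\mid\Tcen^0]\). Since \(\mu_1\) and \(\mu\) share the universal past-given-future kernel, \(\E_\mu[D\mid \mathscr G_N^0]\) determines \(\mu_1\) on all cylinders of size at most \(N\). Letting \(N\to\infty\) gives \(\mu_1=\E_\mu[D\mid\Tcen^0]\,\mu\). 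If the tail is trivial, this density is constant, so \(\mu_1=\mu\). Hence \(\mu\) is extreme.

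\textbf{Minimal rays.} Through \(M_n=u\,h\), central laws correspond affinely to normalized harmonic functions. Suppose \(0\le g\le h\) with \(g\) harmonic and \(g\ne0\). Then \(h-g\) is also harmonic and nonnegative, and \(h=g(\one)\,(g/g(\one))+(h-g)(\one)\,\bigl((h-g)/(h-g)(\one)\bigr)\) is a convex decomposition, handling the degenerate cases \(g(\one)\in\{0,1\}\). A harmonic \(g\ge0\) with \(g(\one)=0\) vanishes identically, since \(g(\one)\) dominates \(g(t)\,u(t)\)-weighted sums, as in \(\sum_{t\in\T_n}u(t)g(t)=g(\one)\). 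So extremality gives \(g\propto h\). Conversely, any convex decomposition produces \(g=ah_1\le h\), and minimality forces \(h_1=h\).

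\textbf{Main obstacle.} The main obstacle is the density identification step in the second part. There we must justify that the universal past kernel lets \(\E_\mu[D\mid\mathscr G_N^0]\) reproduce \(\mu_1\) on \(\mathscr F_N^0\)-cylinders. This is where we will argue carefully with the Dynkin sufficiency formalism.
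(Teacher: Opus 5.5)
Your argument is correct, but for one direction it takes a different route from the paper. Your two bold headings are swapped relative to their content: the first paragraph proves extremality \(\Rightarrow\) tail triviality by contraposition, and the second proves the converse. The first direction and the minimal-ray part are the paper's argument. The one thing to add there is that uniformity of the prefix given \(\mathscr G_n^0\) is not literally the definition of centrality. It follows from it by counting paths through a fixed finite future segment and then a monotone-class step.

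For ``trivial tail \(\Rightarrow\) extreme'', the paper runs the \emph{forward} martingale \(H_n=\E_\mu[H\mid\mathscr F_n^0]=M_n^\nu(X_n)/M_n^\mu(X_n)\). Since it depends on the endpoint alone, every \(H_n\) with \(n\ge N\) is \(\mathscr G_N^0\)-measurable, so \(\limsup_nH_n\) is a raw-tail version of \(H\). You run the \emph{reverse} martingale \(D^{(N)}=\E_\mu[D\mid\mathscr G_N^0]\). The step you call the main obstacle takes two lines. Let \(\gamma\) end at size \(n\le N\), and let \(k_\gamma(X_N)=\mu(C_\gamma\mid\mathscr G_N^0)=\mu_1(C_\gamma\mid\mathscr G_N^0)\) be the universal bridge probability. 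Then
\[
 \mu_1(C_\gamma)=\E_{\mu_1}[k_\gamma(X_N)]=\E_\mu[D\,k_\gamma(X_N)]
 =\E_\mu[D^{(N)}k_\gamma(X_N)]=\E_\mu[D^{(N)}\mathbf1_{C_\gamma}].
\]
After this, \(L^1\) convergence in L\'evy's downward theorem and a \(\pi\)-system argument give \(\mu_1=\E_\mu[D\mid\Tcen^0]\,\mu\).

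The paper's route is shorter: the forward density is explicit, and this direction then needs only uniformity given the endpoint. Yours exhibits the tail conditional expectation directly as the density (Dynkin's sufficiency mechanism). It is also the same finite-bridge, reverse-martingale argument the paper itself uses in Step~3 of Theorem~\ref{thm:path-forest-tail}.

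When you write it up, delete the abandoned detour. The ratio \(M_N^{\mu_1}(X_N)/M_N^{\mu}(X_N)\) is the density on \(\mathscr F_N^0\), which already contains \(X_N\). It is not the density on \(\mathscr F_{N+1}^0\vee\sigma(X_N)\); there the density is the corresponding ratio at size \(N+1\). It is also in general not the density on \(\mathscr G_N^0\), which is \(D^{(N)}\).
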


\begin{proof}
If \(\gamma\) ends at \(t\in\T_n\), centrality and counting prefixes
give, for \(N\ge n\) and \(B\in\mathscr G_N^0\),
\begin{equation}\label{eq:prefix-uniformity-followup}
 \mu(C_\gamma\cap B)=\frac{\mu(\{X_n=t\}\cap B)}{u(t)}.
\end{equation}
It suffices to count for a fixed finite future segment and then use the
monotone-class theorem. Conditioning on a nontrivial raw-tail event therefore
splits \(\mu\) into two different central laws.

Conversely, suppose \(\mu=a\nu+(1-a)\nu'\), with \(0<a<1\) and
central \(\nu,\nu'\). For \(H=d\nu/d\mu\), prefix uniformity gives
\[
 \E_\mu[H\mid\mathscr F_n^0]
   =\frac{M_n^\nu(X_n)}{M_n^\mu(X_n)}=:H_n,
\]
with the ratio set to zero at zero-mass endpoints. The domination
\(\mu\ge a\nu\) gives \(0\le H_n\le a^{-1}\) on every path.
Since the prefix fields generate the Borel sigma-field, \(H_n\to H\)
almost surely and in \(L^1\). For each \(N\), all \(H_n\) with \(n\ge N\) are
\(\mathscr G_N^0\)-measurable; hence \(\limsup_nH_n\) is a raw-tail
version of \(H\). Tail triviality forces \(H=1\) and \(\nu=\mu\).
Completion does not change triviality. Finally, the affine harmonic
correspondence preserves extreme points; decomposing a normalized harmonic
function by a smaller nonnegative harmonic function gives the
minimal-ray equivalence.
\end{proof}

\section{Positivity rigidity and the path-forest quotient}
\label{sec:path-forest-quotient}

A tree is \emph{path-forest} if every non-root vertex has at most one child.
For an integer partition \(\lambda=(\lambda_1,\ldots,\lambda_k)\), let
\(P_\lambda\) have chain branches of these sizes. Thus
\(|P_\lambda|=1+|\lambda|\), \(P_\varnothing=\one\), and
\(P_{(k)}=C_{k+1}\) for \(k\ge1\). Put
\[
 \Ipf=\Span\{F_t:t\text{ is not path-forest}\}.
\]
We prove that basis-positive characters and two-sided Pieri states vanish
on \(\Ipf\), identify the symmetric-function quotient, and characterize
this vanishing by chain tests.

\subsection{Commutators and positivity propagation}

Let \(Y\) be the tree whose root has one child with two leaf children, and
let \(B=P_{(2,1)}\). The products in Figure~\ref{fig:GL-low-degree-products}
give the commutator identity used in the rigidity proof.

\begin{figure}[htbp]
\centering
\begin{tabular}{c@{\qquad}c@{\qquad}c@{\qquad}c@{\qquad}c}
\(\tau\)&\(C_3\)&\(C_4\)&\(Y\)&\(B\)\\[.4em]
\TreeTauPic&\TreeCthreePic&\TreeCfourPic&\TreeYPic&\TreeBPic
\end{tabular}
\caption{The trees in the two products
\(F_\tau F_{C_3}=F_{C_4}+F_Y+F_B\) and
\(F_{C_3}F_\tau=F_{C_4}+F_B\). Roots are drawn at the top.}
\label{fig:GL-low-degree-products}
\end{figure}

\begin{lemma}[The four-vertex commutator]\label{lem:GL-kills-Y}
One has
\begin{equation}\label{eq:universal-Y-commutator}
 F_\tau F_{C_3}-F_{C_3}F_\tau=F_Y.
\end{equation}
Every real character and every two-sided Pieri state vanishes on \(F_Y\).
\end{lemma}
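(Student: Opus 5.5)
The plan is to compute both products directly from the grafting formula \eqref{eq:GL-product}, subtract them, and then apply multiplicativity or the Pieri identities to the resulting commutator.

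\emph{The product \(F_\tau F_{C_3}\).} Here \(B_-(\tau)=\one\) is a single vertex, so \eqref{eq:pieri-n} applies. Label the vertices of \(C_3\) as root \(r\), middle vertex \(a\), and leaf \(b\). Attaching a leaf at \(r\) gives \(B=P_{(2,1)}\). Attaching it at \(a\) gives \(Y\). Attaching it at \(b\) gives \(C_4\). Each of these grafting maps yields a distinct tree, so
\[
F_\tau F_{C_3}=F_{C_4}+F_Y+F_B .
\]

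\emph{The product \(F_{C_3}F_\tau\).} Deleting the root of \(C_3\) leaves the single branch \(B_-(C_3)=\tau\). This branch is grafted onto one of the two vertices of \(\tau\). Grafting at the root of \(\tau\) gives \(B\). Grafting at the leaf of \(\tau\) gives \(C_4\). Hence
\[
F_{C_3}F_\tau=F_{C_4}+F_B .
\]
Subtracting the two displays gives \eqref{eq:universal-Y-commutator}. This agrees with Figure~\ref{fig:GL-low-degree-products}. The one point to check carefully is the grafting convention. The root of the \emph{left} factor is removed, so in the second product the two-vertex branch \(\tau\) is what gets attached. Its root lands on a vertex of \(\tau\), and neither position creates a vertex with two children away from the root.

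\emph{Real characters.} Let \(\psi\) be a real character. The algebra \(\R\) is commutative, so
\[
\psi(F_\tau F_{C_3})=\psi(F_\tau)\psi(F_{C_3})=\psi(F_{C_3}F_\tau).
\]
Hence \(\psi(F_Y)=0\). This argument also covers the zero map.

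\emph{Two-sided Pieri states.} Let \(L\) be a two-sided Pieri state. The left identity gives \(L(F_\tau F_{C_3})=L(F_{C_3})\), and the right identity gives \(L(F_{C_3}F_\tau)=L(F_{C_3})\). Subtracting yields \(L(F_Y)=0\).

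I do not expect a real obstacle. The only care needed is in the bookkeeping of the product convention and of the grafting multiplicities, and the latter are all equal to one here.
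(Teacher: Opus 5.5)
Your proposal is correct and follows the paper's proof essentially verbatim: both products are computed by grafting at the three vertices of \(C_3\) and at the two vertices of \(\tau\), and the difference is \(F_Y\). The vanishing then follows from multiplicativity into the commutative field \(\R\) for characters, and from \(L(F_\tau F_{C_3})=L(F_{C_3})=L(F_{C_3}F_\tau)\) for two-sided Pieri states.
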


\begin{proof}
In \(F_\tau F_{C_3}\), grafting the single vertex at the root, middle
vertex, or leaf produces \(B,Y,C_4\), respectively. In
\(F_{C_3}F_\tau\), the chain branch can be grafted at either vertex of
\(\tau\), producing \(B\) or \(C_4\). Subtraction proves the identity.
A character annihilates commutators. For a two-sided Pieri state, both
products have value \(L(F_{C_3})\), so their difference has value zero.
\end{proof}

\begin{lemma}[Propagation of zeros]\label{lem:positivity-propagation}
Let \(L\) be either a basis-positive character or a two-sided
Pieri state. If \(L(F_A)=0\), then every basis vector occurring in
\(F_SF_A\) or \(F_AF_S\), for any tree \(S\), has value zero under
\(L\).
\end{lemma}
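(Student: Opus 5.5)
The plan is to show that \(L(F_SF_A)=0\) and then use basis positivity. The structure constants in \eqref{eq:GL-product} are nonnegative integers, so
\[
 F_SF_A=\sum_T c_T F_T,\qquad c_T\in\mathbb Z_{\ge0}.
\]
If \(L(F_SF_A)=0\), then \(\sum_T c_T L(F_T)=0\) is a finite sum of nonnegative terms. Every term with \(c_T>0\) must therefore vanish, so \(L(F_T)=0\) for every basis vector occurring in the product. The same argument applies verbatim to \(F_AF_S\). The whole proof thus reduces to showing that \(L(F_SF_A)=L(F_AF_S)=0\).

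For a basis-positive character this is immediate from multiplicativity:
\[
 L(F_SF_A)=L(F_S)L(F_A)=0,
\]
and likewise for the opposite order.

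For a two-sided Pieri state, multiplicativity is not available, so I would get the vanishing from positivity and a bound instead.

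1. \textbf{Trees with no grafted branches.} If \(S=\one\), then \(F_SF_A=F_AF_S=F_A\), and there is nothing to prove.

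2. \textbf{Upper bound by powers of \(F_\tau\).} I will show that \(L(F_SF_A)\le C\,L(F_\tau^kF_A)\) for some constant \(C>0\) and some \(k\), and similarly on the right with \(F_AF_\tau^k\).
   \begin{enumerate}
   \item By \eqref{eq:iterated-Pieri}, with \(k=|S|-1\), we have \(F_\tau^k=\sum_{t\in\T_{k+1}}u(t)F_t\).
   \item Since \(u(S)\ge1\), this gives \(F_S\le F_\tau^k\) coefficientwise in the cone \(\GL_+\).
   \item Multiplication by a basis vector preserves \(\GL_+\), again by the nonnegative structure constants. Hence \(F_\tau^kF_A-F_SF_A\in\GL_+\), and similarly \(F_AF_\tau^k-F_AF_S\in\GL_+\).
   \end{enumerate}

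3. \textbf{Evaluation using the Pieri identities.} Basis positivity of \(L\) gives
\[
 0\le L(F_SF_A)\le L(F_\tau^kF_A)=L(F_A)=0,
\]
where the equality \(L(F_\tau^kF_A)=L(F_A)\) comes from iterating the left Pieri identity. On the right, \(L(F_AF_\tau^k)=L(F_A)=0\) follows from iterating the right Pieri identity in the same way.

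The only point needing care is the comparison \(F_S\le F_\tau^k\) in the cone, together with the fact that multiplying by a basis vector preserves \(\GL_+\). Both follow directly from \eqref{eq:iterated-Pieri} and the integrality of the structure constants, so I expect no real obstacle.
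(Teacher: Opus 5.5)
Your proposal is correct and follows essentially the same route as the paper. Characters are handled by multiplicativity. For two-sided Pieri states both arguments use the iterated identities \(L(F_\tau^kF_A)=L(F_A)=L(F_AF_\tau^k)=0\), the expansion of \(F_\tau^k\), and basis positivity. The last step is positivity of the tree expansion of the product. The one cosmetic difference is that you dominate \(F_SF_A\) by \(F_\tau^kF_A\) using \(u(S)\ge1\), whereas the paper writes \(0=\sum_{S\in\T_{k+1}}u(S)L(F_SF_A)\) and uses \(u(S)>0\).
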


\begin{proof}
For a character, \(L(F_SF_A)=L(F_AF_S)=L(F_S)L(F_A)=0\).
Assume now that \(L\) is a two-sided Pieri state. Repeated application of
the two Pieri identities gives, for every \(m\ge0\),
\[
 L(F_\tau^mF_A)=L(F_A)=L(F_AF_\tau^m)=0.
\]
Expanding \(F_\tau^m\) by \eqref{eq:iterated-Pieri} yields
\begin{equation}\label{eq:two-sided-propagation-sums}
\begin{aligned}
 0&=\sum_{S\in\T_{m+1}}u(S)L(F_SF_A),\\
 0&=\sum_{S\in\T_{m+1}}u(S)L(F_AF_S).
\end{aligned}
\end{equation}
Every product lies in \(\GL_+\), so all summands are nonnegative. Since
\(u(S)>0\) for every \(S\), both product values vanish individually.
Allowing \(m\) to vary covers all trees \(S\).

Expanding either product in the tree basis and using positivity proves
the assertion.
\end{proof}

\begin{lemma}[Two-step grafting from \(Y\)]
\label{lem:constructive-propagation-from-Y}
Let \(T\) be a non-path-forest tree. There are rooted trees
\(A_0=Y,A_1,\ldots,A_m=T\), with \(0\le m\le2\), such that
for every \(j<m\), some rooted tree \(S_j\) has \(F_{A_{j+1}}\)
occurring with positive coefficient in \(F_{S_j}F_{A_j}\) or
\(F_{A_j}F_{S_j}\).
\end{lemma}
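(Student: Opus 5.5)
The plan is to write \(T\) as a grafting of a smaller tree by a tree of the form \(B_+(U)\), where \(U\) is rooted at a branching vertex, and then to grow \(B_+(U)\) out of \(Y\). Both steps use only the definition \eqref{eq:GL-product}. All coefficients there are nonnegative integers. So to show that a tree occurs with positive coefficient in a product, it suffices to exhibit one grafting map \(f\) that produces it.

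\emph{Setup.} Since \(T\) is not path-forest, it has a non-root vertex \(v\) with children subtrees \(u_1,\ldots,u_k\), where \(k\ge2\). Let \(w\) be the parent of \(v\), which exists because \(v\) is not the root. Let \(U=T_v=B_+(u_1\cdots u_k)\). Let \(T'\) be the tree obtained from \(T\) by deleting all of \(T_v\); it contains the root and \(w\). Put \(A_1=B_+(U)\).

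\emph{Step from \(A_1\) to \(T\).} Consider the product \(F_{A_1}F_{T'}\). The forest \(B_-(A_1)\) is the single tree \(U\). The grafting map sending it to \(w\in V(T')\) produces exactly \(T\), since \(T'\circ_f U\) restores the subtree \(T_v\) below \(w\). Hence \(F_T\) occurs in \(F_{A_1}F_{T'}\) with positive coefficient, and we take \(S_1=T'\).

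\emph{Step from \(Y\) to \(A_1\).} Label the vertices of \(Y\) as the root, the middle vertex \(c\), and the two leaves \(\ell_1,\ell_2\). Let \(S_0=B_+\bigl(B_-(u_1)\,B_-(u_2)\,u_3\cdots u_k\bigr)\). Consider the product \(F_{S_0}F_Y\) and the grafting map that attaches:
\begin{itemize}
\item the branches of \(B_-(u_1)\) to \(\ell_1\);
\item the branches of \(B_-(u_2)\) to \(\ell_2\);
\item \(u_3,\ldots,u_k\) to \(c\).
\end{itemize}
This map produces the tree whose root has the single child \(c\), and \(c\) has children subtrees \(u_1,u_2,u_3,\ldots,u_k\). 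That tree is \(B_+(U)=A_1\). So \(F_{A_1}\) occurs in \(F_{S_0}F_Y\) with positive coefficient.

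\emph{Degenerate cases and the bound \(m\le2\).} If the forest inside \(S_0\) is empty, then \(S_0=\one\) and \(A_1=Y\). In that case we drop the first step, using \(F_\one F_Y=F_Y\). If moreover \(T'=\one\), then \(T=A_1\) and we drop the second step as well. In every case the chain \(Y=A_0,A_1,A_2=T\), possibly shortened, has length at most two, as claimed.

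\emph{Main obstacle.} The only real care needed is in the first step. One must check that the grafted tree in \(F_{S_0}F_Y\) is exactly \(B_+(U)\) up to rooted isomorphism, with the new subtrees landing in the correct positions. This follows because grafting \(B_-(u_i)\) at a single vertex reconstructs \(u_i\), as \(B_+\) and grafting at a leaf agree. I expect this bookkeeping to be routine once the vertices of \(Y\) are named.
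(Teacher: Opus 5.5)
Your proof is correct and follows essentially the paper's route: a first product \(F_{S_0}F_Y\) whose left factor is \(B_+\) of the omitted subtrees, followed by a product \(F_{A_1}F_{S_1}\) that regrafts onto a pruned copy of \(T\), with positivity witnessed by a single grafting map. The only difference is the anchoring. You give \(Y\) a fresh root above \(v\), rebuild \(B_+(T_v)\), and then graft the single branch \(T_v\) at the parent \(w\). The paper instead roots the copy of \(Y\) at the parent \(u\), rebuilds all of \(T_u\) in the first step, and regrafts all children of \(u\) onto \(u\) in the second.
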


\begin{proof}
Choose a non-root branching vertex \(v\) of \(T\), let \(u\) be its
parent, and choose two children of \(v\). Retain \(u\), \(v\), and these
two children, together with their connecting edges. Rooted at \(u\), this
four-vertex subtree is a copy of \(Y\) inside \(T_u\).

First reconstruct \(T_u\). At each of the four retained vertices, collect
all child-subtree occurrences whose attachment edges were omitted. Keep each
such subtree in full, and denote this family by \(\mathcal O\). Set
\(S=B_+(\prod_{a\in\mathcal O}a)\). In \(F_SF_Y\), graft each
occurrence \(a\in\mathcal O\) to its original parent in the copy of
\(Y\). These subtrees are disjoint and account for every omitted vertex,
so this grafting produces \(T_u\). Thus \(F_{T_u}\) occurs with
positive coefficient in \(F_SF_Y\). If \(\mathcal O=\varnothing\),
then \(T_u=Y\) and this step is unnecessary.

If \(u\) is the root of \(T\), the construction is complete. Otherwise,
let \(R\) be obtained from \(T\) by deleting all strict descendants of
\(u\), leaving \(u\) as a leaf. In \(F_{T_u}F_R\), deleting the
root of the left factor leaves the child-subtree occurrences of \(u\).
Grafting all of them to the retained vertex \(u\) of \(R\)
reconstructs \(T\), so \(F_T\) occurs with positive coefficient.

Isomorphic branches are counted as distinct occurrences in both products.
This constructs the required sequence in at most two steps.
\end{proof}

\begin{theorem}[Positivity rigidity]\label{thm:GL-vanishing}
Every basis-positive Grossman--Larson character and every two-sided
Pieri state vanishes on \(\Ipf\).
\end{theorem}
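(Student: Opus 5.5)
The proof chains the three preceding lemmas; only the linear extension needs a separate remark. Let \(L\) be a basis-positive character or a two-sided Pieri state. Since \(\Ipf\) is spanned by the \(F_T\) with \(T\) not path-forest and \(L\) is linear, it suffices to show \(L(F_T)=0\) for each such \(T\).

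\emph{Base step.} Lemma~\ref{lem:GL-kills-Y} gives \(L(F_Y)=0\) in both cases.

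\emph{Propagation.} Fix a non-path-forest tree \(T\). Lemma~\ref{lem:constructive-propagation-from-Y} supplies a chain \(Y=A_0,A_1,\ldots,A_m=T\) with \(m\le2\). For each \(j<m\) there is a tree \(S_j\) such that \(F_{A_{j+1}}\) occurs with positive coefficient in \(F_{S_j}F_{A_j}\) or in \(F_{A_j}F_{S_j}\). We induct on \(j\). If \(L(F_{A_j})=0\), then Lemma~\ref{lem:positivity-propagation} applies to \(A=A_j\) and \(S=S_j\). It says that every basis vector occurring in either product has value zero, so \(L(F_{A_{j+1}})=0\). After at most two steps we reach \(L(F_T)=0\).

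\emph{Where the hypotheses enter.} Lemma~\ref{lem:positivity-propagation} requires only that \(L\) be a basis-positive character or a two-sided Pieri state, and Lemma~\ref{lem:GL-kills-Y} holds in both settings, so the argument covers both cases uniformly. The characters in the statement are exactly those of Lemma~\ref{lem:positivity-propagation}. The zero map and the counit satisfy the claim trivially, and the argument does not need to separate them. No single step is hard here; the substantive work, the two-step grafting construction and the positivity sums for two-sided states, is already contained in the lemmas. The one remaining point is that "occurring" means appearing with positive coefficient in an expansion with nonnegative integer coefficients. This is exactly the sense used in Lemma~\ref{lem:constructive-propagation-from-Y}, so the two lemmas fit together without a gap.
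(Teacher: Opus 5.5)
Your proposal is correct and matches the paper's proof: the initial zero at \(F_Y\) from Lemma~\ref{lem:GL-kills-Y}, then Lemma~\ref{lem:positivity-propagation} applied along the chain of at most two grafting steps from Lemma~\ref{lem:constructive-propagation-from-Y}, then linearity to conclude \(L(\Ipf)=0\). Your remarks on the zero map, the counit, and the meaning of ``occurring'' are accurate, but they are not needed for the argument.
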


\begin{proof}
Lemma~\ref{lem:GL-kills-Y} gives the initial zero at \(Y\).
For a non-path-forest tree \(T\), take the sequence from
Lemma~\ref{lem:constructive-propagation-from-Y}. Apply
Lemma~\ref{lem:positivity-propagation} at each of its at most two steps.
This gives \(L(F_T)=0\), and linearity gives \(L(\Ipf)=0\).
\end{proof}

\subsection{The symmetric-function quotient}

\begin{proposition}[The path-forest Hopf ideal]
\label{prop:path-forest-hopf-ideal}
The subspace \(\Ipf\) is a graded Hopf ideal.
\end{proposition}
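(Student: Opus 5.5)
We must show three things: \(\Ipf\) is homogeneous, it is a two-sided ideal, and it is a coideal with \(S(\Ipf)\subseteq\Ipf\). Gradedness is immediate, since \(\Ipf\) is spanned by basis vectors \(F_t\) and each has degree \(|t|-1\).

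\textbf{Ideal property.} Fix trees \(s,t\) with at least one of them non-path-forest, and consider a grafted tree \(T=t\circ_f B_-(s)\) from \eqref{eq:GL-product}. The plan is to show that \(T\) is non-path-forest.

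First suppose \(t\) has a non-root branching vertex. Grafting only adds children, so that vertex keeps at least two children in \(T\), and it is still a non-root vertex there.

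Next suppose \(s\) has a non-root branching vertex \(w\). Then \(w\) lies in some branch \(s_i\). Its children in \(s_i\) stay its children after the graft. Moreover \(w\) is not the root of \(T\), because the root of \(T\) is the root of \(t\). Hence \(T\) is non-path-forest.

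Both \(F_sF_t\) and \(F_tF_s\) are sums of such grafts, so \(\GL\Ipf+\Ipf\GL\subseteq\Ipf\).

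\textbf{Coideal property.} We need \(\Delta(\Ipf)\subseteq\Ipf\otimes\GL+\GL\otimes\Ipf\) and \(\varepsilon(\Ipf)=0\). The counit condition holds because \(\one\) is path-forest.

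For the coproduct, use \eqref{eq:GL-coproduct}. Let \(t=B_+(t_1\cdots t_k)\) be non-path-forest. Then some branch \(t_i\) is not a chain, since a tree whose root branches are all chains is path-forest. In every ordered bipartition \(I\sqcup J=[k]\), the index \(i\) lies in exactly one of \(I\) or \(J\). The corresponding factor \(B_+(\mathfrak f_I)\) or \(B_+(\mathfrak f_J)\) then contains \(t_i\) as a root branch, so it is non-path-forest. Thus every term of \(\Delta F_t\) lies in \(\Ipf\otimes\GL+\GL\otimes\Ipf\).

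\textbf{Antipode.} Since \(\GL\) is connected graded, a homogeneous biideal is automatically stable under the antipode. The plan is to check this by induction on degree using \eqref{eq:GL-antipode-recursion}. Take \(x=F_t\in\Ipf\) of positive degree. In the recursion, the reduced coproduct terms \(x'\otimes x''\) come from proper bipartitions. By the coproduct computation above, each term has \(x'\in\Ipf\) or \(x''\in\Ipf\); write them with basis tensors so that this is termwise.

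If \(x''\in\Ipf\), then \(S(x')x''\in\Ipf\) by the ideal property. If \(x'\in\Ipf\), then \(x'\) has strictly smaller degree, so \(S(x')\in\Ipf\) by induction, and again \(S(x')x''\in\Ipf\). Together with the term \(-x\in\Ipf\), this gives \(S(x)\in\Ipf\).

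I expect no serious obstacle. The most delicate point is the bookkeeping: making sure grafting never relocates a non-root branching vertex to the root. This holds because the root of the product is always the root of the right factor \(t\).
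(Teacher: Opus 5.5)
Your proof is correct and follows essentially the same route as the paper's. It uses the same three ingredients: a non-root branching vertex of either factor survives grafting (because the root of every term is the root of the right factor), the non-chain root branch lands in one tensor factor of every coproduct term, and an induction on degree through the antipode recursion combined with the two-sided ideal property.
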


\begin{proof}
If the right factor of a grafting product has a non-root branching vertex,
that vertex persists in every grafting term. If the left factor has such a
vertex, it lies in one of its root branches and remains non-root after
grafting. Thus every term is non-path-forest whenever either factor is,
proving the two-sided ideal property.

For a non-path-forest tree \(t=B_+(t_1\cdots t_k)\), some branch
\(t_j\) is not a chain. In each summand of \eqref{eq:GL-coproduct}, that
occurrence belongs to one of the two subforests. Its tensor factor is
therefore non-path-forest, so
\[
 \Delta\Ipf\subseteq\Ipf\otimes\GL+\GL\otimes\Ipf,
 \qquad \varepsilon(\Ipf)=0.
\]
The subspace is homogeneous. In each reduced term of
\eqref{eq:GL-antipode-recursion}, at least one factor belongs to \(\Ipf\)
and both have smaller degree. The ideal property and induction therefore
give \(S(\Ipf)\subseteq\Ipf\).
\end{proof}

Write \(\GLpf=\GL/\Ipf\) and \(\overline F_t\) for the image of
\(F_t\). We use the standard Hopf algebra
\(\Lambda=\R[\mathsf p_1,\mathsf p_2,\ldots]\) of symmetric
functions, where \(\mathsf p_k=\sum_i x_i^k\) has degree \(k\). Its coproduct is
evaluation on the disjoint union of two variable alphabets
\(\boldsymbol x\) and \(\boldsymbol y\):
\(\Delta f=f(\boldsymbol x\sqcup\boldsymbol y)\).
Let \(m_\lambda\) be
the monomial symmetric function and put
\[
 a_\lambda=\prod_{j\ge1}r_j(\lambda)!,\qquad
 \widetilde m_\lambda=a_\lambda m_\lambda,
 \qquad m_\varnothing=\widetilde m_\varnothing=1.
\]
See \cite{macdonald1995symmetric,stanley1999enumerative} for these
conventions.

The quotient map below is Hoffman's morphism
\cite[Equation~(4.2)]{hoffman2009rootedsymmetric}. Let
\(e_n=m_{(1^n)}\), where \((1^n)\) has \(n\) parts equal to one,
and put \(e_\lambda=\prod_i e_{\lambda_i}\). Hoffman's duality uses
\((e_\lambda,m_\eta)=\delta_{\lambda,\eta}\) and
\(\langle F_t,\mathfrak f\rangle
=|\Aut(t)|\mathbf1_{\{t\cong B_+(\mathfrak f)\}}\) for a rooted
forest \(\mathfrak f\)
\cite[Section~2 and Theorem~3.1]{hoffman2009rootedsymmetric}.
Under these pairings, the quotient morphism is dual to
\(e_n\mapsto C_n\), \(n\ge1\), into the Connes--Kreimer rooted-forest
Hopf algebra \cite{connes1998hopf,hoffman2003combinatorics}.
Since \(|\Aut(P_\lambda)|=a_\lambda\), the dual map has the
augmented monomial normalization above. The \(n\)-vertex chain has
degree \(n\) in the forest algebra, whereas \(F_{C_n}\) has
degree \(n-1\) in \(\GL\). We verify the quotient directly from grafting.

\begin{proposition}[Hoffman's path-forest quotient]
\label{prop:path-forest-hopf-quotient}
There is a graded Hopf algebra isomorphism
\(\Phi:\GLpf\to\Lambda\) satisfying
\(\Phi(\overline F_{P_\lambda})=\widetilde m_\lambda\) for every
integer partition \(\lambda\). In particular,
\(\Phi(\overline F_\tau)=\mathsf p_1\).
\end{proposition}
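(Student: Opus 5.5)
We prove the proposition by computing the product, the coproduct and the unit directly on the path-forest basis of \(\GLpf\). By Proposition~\ref{prop:path-forest-hopf-ideal}, the classes \(\overline F_{P_\lambda}\) form a basis of \(\GLpf\), and \(\overline F_{P_\lambda}\) has degree \(|\lambda|\). Likewise \(\{\widetilde m_\lambda\}\) is a basis of \(\Lambda\), with \(\widetilde m_\lambda\) of degree \(|\lambda|\). So the linear map \(\Phi(\overline F_{P_\lambda})=\widetilde m_\lambda\) is a graded linear isomorphism. It remains to show that \(\Phi\) is a morphism of bialgebras. The antipode is then preserved automatically, since both sides are connected graded Hopf algebras. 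Finally, \(P_{(1)}=\tau\) and \(\widetilde m_{(1)}=m_{(1)}=\mathsf p_1\).

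\textbf{Coproduct.} Write \(P_\lambda=B_+(C_{\lambda_1}\cdots C_{\lambda_k})\), with the branches treated as distinct occurrences. By \eqref{eq:GL-coproduct}, \(\Delta F_{P_\lambda}=\sum_{I\sqcup J=[k]}F_{P_{\lambda_I}}\otimes F_{P_{\lambda_J}}\), summed over ordered set splittings of the occurrences. On the other side, \(\widetilde m_\lambda=a_\lambda m_\lambda\) is the sum over injective maps from the \(k\) part-occurrences to the variables \(x_i\) of \(\prod_j x_{\iota(j)}^{\lambda_j}\). This holds because each monomial of \(m_\lambda\) is hit exactly \(a_\lambda\) times. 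Evaluating this sum on \(\boldsymbol x\sqcup\boldsymbol y\), we first split the occurrences according to whether they are sent into \(\boldsymbol x\) or into \(\boldsymbol y\). This gives
\[
\Delta\widetilde m_\lambda=\sum_{I\sqcup J=[k]}\widetilde m_{\lambda_I}\otimes\widetilde m_{\lambda_J},
\]
which matches the tree coproduct term by term. The counits also agree.

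\textbf{Product.} We expand \(\overline F_{P_\mu}\,\overline F_{P_\lambda}\) using \eqref{eq:GL-product}, modulo \(\Ipf\). The left factor contributes chain branches \(C_{\mu_1},\dots,C_{\mu_l}\), and each is grafted onto a vertex of \(P_\lambda\). The result is path-forest exactly when every grafting target is either the root or the current bottom leaf of some chain. A leaf receives at most one branch; a second branch there would create non-root branching. Grafting onto an interior vertex, or onto the lower end after another chain has already been attached there, also leaves the path-forest class. So the surviving terms correspond to the following data: a partial injection \(\sigma\) from the occurrences of \(\mu\) to the occurrences of \(\lambda\), sending part \(j\) to the leaf of chain \(\sigma(j)\), and producing the merged part \(\lambda_{\sigma(j)}+\mu_j\); every unmatched part of \(\mu\) becomes a new root branch. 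On the symmetric-function side, \(\widetilde m_\mu\widetilde m_\lambda\) is a sum over pairs of injections \(\iota,\iota'\) into the variables. Grouping these pairs by the coincidence pattern \(\{(j,i):\iota(j)=\iota'(i)\}\) gives the same partial injections \(\sigma\), with merged exponents \(\lambda_i+\mu_j\). The terms with a fixed \(\sigma\) then sum to \(\widetilde m\) of the merged partition. The two expansions therefore agree term by term.

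\textbf{Main obstacle.} The delicate point is the product step. We must check that grafting several chains at the same leaf is excluded: only the first attachment keeps a chain, while the second creates branching at that vertex. We must also check that the multiplicities match exactly, with occurrences counted as distinct and the augmented normalization absorbing the \(a_\lambda\) factors. We will verify both points on the example \(F_\tau F_{C_3}\) of Figure~\ref{fig:GL-low-degree-products}: there \(B+C_4\) maps to \(\widetilde m_{(2,1)}+m_{(3)}=\mathsf p_1\mathsf p_2\), which equals \(\widetilde m_{(1)}\widetilde m_{(2)}\), as required.
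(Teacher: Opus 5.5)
Your proposal is correct and follows essentially the same route as the paper. Both proofs match the tree coproduct with \(\widetilde m_\lambda(\boldsymbol x\sqcup\boldsymbol y)\) through ordered splittings of the distinguished branch occurrences. Both identify the terms of the product that survive modulo \(\Ipf\) with partial matchings of branch occurrences, and match them with the coincidence patterns of pairs of injections in \(\widetilde m_\mu\widetilde m_\lambda\).

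One wording point: in the Grossman--Larson product all grafting targets are vertices of the right factor, so there is no sequential ``current bottom leaf''. The surviving targets are simply the root and the original chain leaves of \(P_\lambda\), which is exactly what your partial-injection count uses anyway.
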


\begin{proof}
We first compare products. Distinguish the root-branch occurrences of
\(P_\lambda\) and \(P_\mu\), with sizes
\(\lambda_1,\ldots,\lambda_r\) and
\(\mu_1,\ldots,\mu_q\), respectively. A grafting term in
\(F_{P_\lambda}F_{P_\mu}\) survives modulo \(\Ipf\) if and only if
each incoming branch is grafted either to the root of \(P_\mu\), where
it remains separate, or to the leaf of a distinct branch, where the two
chains concatenate. Every other grafting creates non-root branching: it
either targets a non-root vertex with a child or sends two incoming branches to the
same leaf. Since all targets belong to \(P_\mu\), the surviving maps
correspond to partial matchings of the two sets of branch occurrences.

Such a map is encoded by a subset \(I\subseteq[r]\) and an injection
\(\alpha:I\hookrightarrow[q]\). For \(i\in I\), the parts
\(\lambda_i\) and \(\mu_{\alpha(i)}\) are replaced by their sum;
unmatched parts remain separate. Let \(\nu(I,\alpha)\) be the decreasing
rearrangement of the multiset
\[
    \{\lambda_i+\mu_{\alpha(i)}:i\in I\}
    \sqcup\{\lambda_i:i\notin I\}
    \sqcup\{\mu_j:j\notin\alpha(I)\}.
\]
Then
\begin{equation}\label{eq:path-forest-product-constants}
    \overline F_{P_\lambda}\,\overline F_{P_\mu}
    =\sum_{I,\alpha}\overline F_{P_{\nu(I,\alpha)}}.
\end{equation}
The augmented monomial basis has a corresponding formula. With the parts of
\(\lambda\) distinguished,
\begin{equation}\label{eq:augmented-monomial-injection-formula}
    \widetilde m_\lambda
    =\sum_{\iota:\{1,\ldots,r\}\hookrightarrow\N}
       \prod_{i=1}^r x_{\iota(i)}^{\lambda_i}.
\end{equation}
An ordinary monomial of type \(\lambda\) occurs once for each permutation
of equal parts, giving the factor \(\prod_j r_j(\lambda)!\). In the
product of two sums of the form
\eqref{eq:augmented-monomial-injection-formula}, the images of the two
injections may overlap. Injectivity makes this overlap a partial matching
between the two sets of parts. For a fixed matching \((I,\alpha)\),
merge each matched pair into one distinguished part whose exponent is the
sum, leaving the other parts distinguished. Assignments of these parts to
distinct variables are in bijection with the pairs of injections realizing
that matching. Its contribution is therefore
\(\widetilde m_{\nu(I,\alpha)}\), including the multiplicities of
equal parts. Summing over matchings gives
\(\widetilde m_\lambda\widetilde m_\mu
=\sum_{I,\alpha}\widetilde m_{\nu(I,\alpha)}\), as in
\eqref{eq:path-forest-product-constants}. Thus \(\Phi\) preserves products.
The classes \(\overline F_{P_\lambda}\) and the augmented monomials are
bases indexed by integer partitions, so \(\Phi\) is a graded algebra
isomorphism.

For the coproduct, \eqref{eq:GL-coproduct} splits the distinguished
root branches into two submultisets:
\begin{equation}\label{eq:path-forest-coproduct}
    \Delta(\overline F_{P_\lambda})
    =\sum_{I\sqcup J=[r]}
      \overline F_{P_{\lambda_I}}
      \otimes
      \overline F_{P_{\lambda_J}},
\end{equation}
where \(\lambda_I\) is the subpartition formed by the parts indexed by
\(I\), with \(\lambda_\varnothing=\varnothing\). Evaluating
\eqref{eq:augmented-monomial-injection-formula} on
\(\boldsymbol x\sqcup\boldsymbol y\) assigns each distinguished part
to one of these two alphabets, giving the same ordered splits
\(I\sqcup J=[r]\). Hence
\((\Phi\otimes\Phi)\Delta=\Delta\Phi\). The counits agree on the unit and
vanish in positive degree. Thus \(\Phi\) is a graded bialgebra isomorphism,
and therefore a Hopf algebra isomorphism.
\end{proof}

Thus the morphism used in the introduction is
\begin{equation}\label{eq:quotient-map-Q}
 \mathsf Q=\Phi\circ\pi_{\mathrm{pf}}:\GL\longrightarrow\Lambda,
 \qquad \pi_{\mathrm{pf}}:\GL\longrightarrow\GLpf,
\end{equation}
where \(\pi_{\mathrm{pf}}\) is the quotient map. In particular,
\(\ker\mathsf Q=\Ipf\) and \(\mathsf Q(F_\tau)=\mathsf p_1\).
Theorem~\ref{thm:GL-vanishing} says that every basis-positive character
factors through this map.

\begin{remark}[Comparison with Connes--Kreimer characters]
\label{rem:Connes-Kreimer-characters}
The Connes--Kreimer Hopf algebra \(H_{\mathrm{CK}}\) is the free
commutative algebra generated by rooted trees, with the empty forest
as unit \cite{connes1998hopf,hoffman2003combinatorics}.
Any family \((a_t)_{t\in\T}\) of real numbers therefore determines
a unique unital character through
\(\varphi_a(t_1\cdots t_r)=\prod_{j=1}^r a_{t_j}\).
This character is nonnegative on the forest basis if and only if
\(a_t\ge0\) for every \(t\), so positivity imposes no vanishing
condition on branching trees.

Under graded duality, unital characters of \(H_{\mathrm{CK}}\)
correspond to group-like elements in the degree completion of \(\GL\)
\cite[Proposition~4.4]{hoffman2003combinatorics}.
The present classification concerns multiplicative linear functionals on
\(\GL\), whose basis positivity is constrained by the grafting product.
\end{remark}

\subsection{Chain defects and independent tests}
\label{subsec:deterministic-counterexample}

For a left Pieri state, chain commutators test vanishing on \(\Ipf\).
We first prove this criterion and then show that the chain tests are
independent.

\begin{lemma}[Chain commutators and defect bounds]
\label{lem:chain-Pieri-defects}
For \(m\ge3\), label the vertices of \(C_m\) by
\(v_0,\ldots,v_{m-1}\) in increasing depth, and let \(T_{m,j}\)
be obtained by attaching one leaf at \(v_j\), where
\(1\le j\le m-2\). Thus \(|T_{m,j}|=m+1\), and its unique
branching vertex has depth \(j\). Then
\begin{equation}\label{eq:chain-positive-commutator}
 F_\tau F_{C_m}-F_{C_m}F_\tau
   =\sum_{j=1}^{m-2}F_{T_{m,j}}.
\end{equation}
For a left Pieri state \(L\), define its \emph{chain defects} by
\begin{equation}\label{eq:chain-Pieri-defect}
 \delta_m(L):=L(F_{C_m})-L(F_{C_m}F_\tau)
            =\sum_{j=1}^{m-2}L(F_{T_{m,j}}).
\end{equation}
They satisfy \(0\le\delta_m(L)\le1\). If a tree \(T\) has a
branching vertex at depth \(d\ge1\), then
\begin{equation}\label{eq:chain-defect-descendant-bound}
 0\le L(F_T)\le\delta_{d+2}(L).
\end{equation}
Consequently, all chain defects vanish if and only if \(L(\Ipf)=0\).
\end{lemma}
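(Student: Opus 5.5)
First compute the two products directly from \eqref{eq:GL-product}, exactly as in Lemma~\ref{lem:GL-kills-Y}. In \(F_\tau F_{C_m}\) the single branch of \(\tau\) is one vertex, grafted at some \(v_j\), \(0\le j\le m-1\). Grafting at \(v_0\) gives \(P_{(m-1,1)}\), grafting at \(v_{m-1}\) gives \(C_{m+1}\), and grafting at \(v_j\) with \(1\le j\le m-2\) gives \(T_{m,j}\). In \(F_{C_m}F_\tau\) the single branch of \(C_m\) is the chain \(C_{m-1}\). It is grafted either at the root of \(\tau\), giving \(P_{(m-1,1)}\), or at its leaf, giving \(C_{m+1}\). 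Subtracting proves \eqref{eq:chain-positive-commutator}. The trees \(T_{m,j}\) are pairwise non-isomorphic, since their branching depths differ, so every coefficient is \(1\). Applying \(L\) and using the left Pieri identity \(L(F_\tau F_{C_m})=L(F_{C_m})\) gives \eqref{eq:chain-Pieri-defect}. Each term is nonnegative, so \(\delta_m\ge0\). For the upper bound, \(\delta_m(L)\le L(F_{C_m})\le u(C_m)^{-1}=1\) by the bound \(0\le L(F_t)\le u(t)^{-1}\) from Section~\ref{subsec:central-measures}, since \(u(C_m)=1\). Alternatively, \(L(F_{C_m}F_\tau)\ge0\) follows from basis positivity.

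The main step is \eqref{eq:chain-defect-descendant-bound}. Let \(T\) have a branching vertex \(v\) at depth \(d\ge1\). Then \(T\) contains, along the path from the root to \(v\) with two children of \(v\) attached, a copy of \(T_{d+2,d}\). This copy is a chain of \(d+2\) vertices reaching one child of \(v\), plus a leaf at \(v\). I will show that \(T\) is reachable from \(T_{d+2,d}\) by leaf additions, that is, by a path \(T_{d+2,d}=s_0\nearrow s_1\nearrow\cdots\nearrow s_k=T\) in the Hoffman graph. Such a path exists because every rooted tree containing a rooted subtree sharing its root can be grown from it one leaf at a time, adding the remaining vertices in order of depth.

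Harmonicity then gives monotonicity along edges. Since \(h(s)=\sum_{s\nearrow S}n(s;S)h(S)\) with nonnegative terms and integer coefficients \(n(s;S)\ge1\) on edges, we get \(h(S)\le h(s)\) whenever \(s\nearrow S\). Iterating along the path gives \(L(F_T)\le L(F_{T_{d+2,d}})\le\delta_{d+2}(L)\), where the last inequality holds because \(L(F_{T_{d+2,d}})\) is one summand of \eqref{eq:chain-Pieri-defect} with \(m=d+2\). The only care needed is that the embedded copy shares the root of \(T\), which holds by construction.

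For the final equivalence, suppose all \(\delta_m\) vanish. Every non-path-forest tree has a branching vertex at some depth \(d\ge1\), so \eqref{eq:chain-defect-descendant-bound} gives \(L(F_T)=0\), and linearity gives \(L(\Ipf)=0\). Conversely, each \(T_{m,j}\) with \(1\le j\le m-2\) is non-path-forest, since \(v_j\) is a non-root vertex with two children. So \(L(\Ipf)=0\) forces \(\delta_m(L)=0\) by \eqref{eq:chain-Pieri-defect}. I expect the only mildly delicate step to be the leaf-growth embedding argument; the remaining steps are direct computation.
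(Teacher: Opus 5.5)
Your proposal is correct and follows the paper's proof essentially step by step: you use the same direct grafting computation of the commutator, the same bound \(\delta_m(L)\le L(F_{C_m})\le1\), and the same embedding of \(T_{d+2,d}\) as a root-containing subtree, followed by leaf-by-leaf growth and monotonicity of the harmonic function along edges. One wording point: the nonnegativity of \(L(F_{C_m}F_\tau)\), which holds because \(F_{C_m}F_\tau=F_{P_{(m-1,1)}}+F_{C_{m+1}}\) lies in the basis cone, is not an ``alternative''; it is exactly what the first inequality \(\delta_m(L)\le L(F_{C_m})\) needs.
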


\begin{proof}
In \(F_\tau F_{C_m}\), attaching at the root or at the leaf
produces \(P_{(m-1,1)}\) or \(C_{m+1}\), respectively. These are
the two terms of \(F_{C_m}F_\tau\). Each other attachment gives one
\(T_{m,j}\). These trees have different branching depths, so each
occurs with coefficient one, proving \eqref{eq:chain-positive-commutator}.
The left Pieri identity yields \eqref{eq:chain-Pieri-defect} and
nonnegativity. Since \(F_{C_m}F_\tau\in\GL_+\), one also has
\(\delta_m(L)\le L(F_{C_m})\le1\); the last inequality follows
from \eqref{eq:iterated-Pieri}, \(u(C_m)=1\), and
\(L(F_\tau^{m-1})=1\).

Choose a branching vertex \(v\) of \(T\) at depth \(d\). Keep
the path from the root to \(v\) and two children of \(v\), with all
other vertices deleted. The resulting rooted subtree has \(d+3\) vertices
and is \(U=T_{d+2,d}\). It contains every ancestor of each retained
vertex. Adding the omitted vertices in an order with every parent before
its children therefore gives a leaf-growth sequence
\(U=s_0\nearrow s_1\nearrow\cdots\nearrow s_\ell=T\).
For each step, \eqref{eq:harmonic-general} and positivity give
\[
 L(F_{s_j})=\sum_{R:s_j\nearrow R}n(s_j;R)L(F_R)
       \ge n(s_j;s_{j+1})L(F_{s_{j+1}}).
\]
Since every edge multiplicity is a positive integer, iteration yields
\[
 \delta_{d+2}(L)\ge L(F_U)
   \ge\left(\prod_{j=0}^{\ell-1}n(s_j;s_{j+1})\right)L(F_T)
   \ge L(F_T).
\]
The first inequality uses the occurrence of \(F_U\) in
\eqref{eq:chain-Pieri-defect}; for \(U=T\), the product is empty
and equals one. This proves \eqref{eq:chain-defect-descendant-bound}.
Every non-path-forest tree has such a vertex, so vanishing of all defects
implies \(L(\Ipf)=0\). Conversely, each \(T_{m,j}\) is
non-path-forest, so \(L(\Ipf)=0\) makes every defect vanish.
\end{proof}

To separate the chain tests, fix an integer \(h\ge1\) and define a
leaf-growth path \((t_n^{(h)})_{n\ge1}\) as follows. Up to size
\(h+1\), set
\(t_n^{(h)}=C_n\). Thereafter keep a stem of length \(h\) from
the root to a vertex \(v\), and attach all further vertices as leaves
to \(v\). Thus, for \(n\ge h+1\), the vertex at depth \(h\)
has \(n-h-1\) leaf children. The first non-path-forest endpoint is
\(t_{h+3}^{(h)}=T_{h+2,h}\).

\begin{proposition}[Independent chain tests]
\label{prop:extremal-non-character}
For every \(h\ge1\), the Dirac law \(\mu_h\) on the path
\((t_n^{(h)})_{n\ge1}\) is central and extreme.
Its left Pieri state \(L_h\) satisfies
\begin{equation}\label{eq:independent-chain-defects}
 \delta_m(L_h)=\mathbf1_{\{m=h+2\}},\qquad m\ge3.
\end{equation}
Thus each chain identity can fail while all the others hold, and
\(\mu_1\) is not induced by a Grossman--Larson character.
\end{proposition}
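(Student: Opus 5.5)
We prove the proposition in four steps: centrality, extremality, the defect formula, and the consequence for characters.

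\emph{Centrality.} A Dirac law on a single weighted path \(\gamma_\infty\) is central exactly when every endpoint \(t_n^{(h)}\) has weighted path dimension \(u(t_n^{(h)})=1\). In that case the unique prefix ending at \(t_n^{(h)}\) gets mass \(1=M_n(t_n^{(h)})/u(t_n^{(h)})\) in \eqref{eq:central-cylinder-followup}. We check this with the hook formula \eqref{eq:rooted-hook}.

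For \(n\le h+1\) the tree is a chain, so \(u=1\). For \(n\ge h+2\), let \(k=n-h-1\). The tree is a stem \(v_0,\dots,v_h\) with \(k\) leaves attached at \(v_h\). The subtree sizes are:
\begin{itemize}
\item \(|t_{v_i}|=n-i\) for \(i\le h\);
\item \(1\) for each leaf.
\end{itemize}
Hence \(d=n!/\bigl(n(n-1)\cdots(n-h)\bigr)=(n-h-1)!=k!\). The automorphism group permutes the \(k\) leaves, so \(|\Aut|=k!\) and \(u=1\).

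A single path is also realized: the edge from \(t_n^{(h)}\) to \(t_{n+1}^{(h)}\) exists. For \(n\ge h+2\), its multiplicity \(n(t_n;t_{n+1})=k\) can exceed one. We fix one edge copy, and Lemma~\ref{lem:path-quotient-labelling-bijection} makes the choice irrelevant. In fact \(u=1\) already forces a single weighted path.

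\emph{Extremality.} A Dirac measure on \(\mathcal X_\Gamma\) is extreme among all probability measures, hence in \(\mathscr C(\Gamma)\).

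\emph{The defect formula.} By \eqref{eq:left-Pieri-central-map},
\[
L_h(F_t)=\mu_h\{X_{|t|}=t\}/u(t),
\]
which is \(1\) if \(t=t_{|t|}^{(h)}\) and \(0\) otherwise. Use \eqref{eq:chain-Pieri-defect}:
\[
\delta_m(L_h)=\sum_{j=1}^{m-2}L_h(F_{T_{m,j}}).
\]
The endpoint of size \(m+1\) is \(t_{m+1}^{(h)}\), so we must decide when it equals some \(T_{m,j}\).

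\begin{itemize}
\item If \(m+1\le h+1\), it is a chain.
\item If \(m+1=h+2\), it is \(C_{h+2}\), still a chain.
\item If \(m+1=h+3\), it is \(T_{h+2,h}\), and \(j=h\le m-2=h\) is admissible, giving defect \(1\).
\item If \(m+1\ge h+4\), the depth-\(h\) vertex has at least three children. Every \(T_{m,j}\) has a branching vertex with exactly two children, so there is no match.
\end{itemize}
Thus \(\delta_m(L_h)=\mathbf1_{\{m=h+2\}}\).

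\emph{Consequences.} The first consequence is immediate from the formula: exactly the identity at level \(m=h+2\) fails. For \(\mu_1\), suppose it came from a normalized basis-positive character \(\psi\). Then \(L_1=\psi\) would vanish on \(\Ipf\) by Theorem~\ref{thm:GL-vanishing}, so \(\delta_3=0\) by Lemma~\ref{lem:chain-Pieri-defects}. This contradicts \(\delta_3(L_1)=1\). Equivalently, \(L_1(F_Y)=1\), contradicting Lemma~\ref{lem:GL-kills-Y}.

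The only step that needs real care is the dimension computation \(u(t_n^{(h)})=1\); everything else is bookkeeping.
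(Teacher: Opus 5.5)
Your proof is correct and follows the paper's argument. The hook formula gives \(u(t_n^{(h)})=1\), so the Dirac law is central and extreme with \(L_h(F_t)=\mathbf1_{\{t=t_{|t|}^{(h)}\}}\). The defect is then read off by comparing the child count and depth of the branching vertex of \(t_{m+1}^{(h)}\) with those of the \(T_{m,j}\).

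One side remark is false, though harmless. You claim \(n(t_n^{(h)};t_{n+1}^{(h)})=k\), but in fact \(n(t_n^{(h)};t_{n+1}^{(h)})=1\): only the depth-\(h\) vertex produces \(t_{n+1}^{(h)}\). It is \(m(t_n^{(h)};t_{n+1}^{(h)})\) that exceeds one. A multiplicity \(k>1\) would also contradict your own \(u(t_{n+1}^{(h)})=1\) via \eqref{eq:Hoffman-dimension-recursion}. So you should drop that remark, and the appeal to Lemma~\ref{lem:path-quotient-labelling-bijection} is unnecessary.
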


\begin{proof}
The chain endpoints have dimension one. For \(n\ge h+1\), put
\(r=n-h-1\). The stem vertices have hook sizes
\(n,n-1,\ldots,r+1\), and the \(r\) attached leaves have hook size
one. The hook formula and the permutations of these leaves give
\[
 \prod_{x\in V(t_n^{(h)})}|(t_n^{(h)})_x|=\frac{n!}{r!},
 \qquad d(t_n^{(h)})=|\Aut(t_n^{(h)})|=r!,
 \qquad u(t_n^{(h)})=1.
\]
The compatible endpoint sequence therefore has a unique weighted prefix at
every size. Its Dirac law \(\mu_h\) is central and extreme, and
\eqref{eq:left-Pieri-central-map} gives
\(L_h(F_t)=\mathbf1_{\{t=t_{|t|}^{(h)}\}}\).

Each \(T_{m,j}\) has a single branching vertex with two children.
The endpoint \(t_{m+1}^{(h)}\) is a chain when \(m<h+2\),
equals \(T_{h+2,h}\) when \(m=h+2\), and has at least three
children at its branching vertex when \(m>h+2\). At \(m=h+2\),
the branching depth singles out the term \(j=h\). Summing its
values in \eqref{eq:chain-Pieri-defect} proves
\eqref{eq:independent-chain-defects}. Finally,
\(t_4^{(1)}=Y\), so \(L_1(F_Y)=1\), whereas every character
vanishes on \(F_Y\) by Lemma~\ref{lem:GL-kills-Y}.
\end{proof}

\section{Paintbox characters and two-sided Pieri states}
\label{sec:kingman-boundary-characters}

The quotient reduces the classification to positive Pieri functionals on
symmetric functions. Kingman's representation expresses them as paintbox
mixtures; multiplicativity forces the mixing measure to be a point mass.

\subsection{Paintbox coordinates}
\label{subsec:paintbox-coordinates}

Recall the ranked simplex
\(\King=\{p_1\ge p_2\ge\cdots\ge0:\sum_i p_i\le1\}\), with the
product topology and dust mass \(p_0=1-\sum_i p_i\). For \(k\ge2\),
put \(S_k(p)=\sum_i p_i^k\). The paintbox specialization is the
unital algebra homomorphism \(\chi_p:\Lambda\to\R\) determined by
\[
 \chi_p(\mathsf p_1)=1,\qquad
 \chi_p(\mathsf p_k)=S_k(p)\quad(k\ge2).
\]

Associate to \(p\) the mass-weighted probability measure
\(\nu_p=p_0\delta_0+\sum_{i\ge1}p_i\delta_{p_i}\) on \([0,1]\).
Its moments satisfy \(\int x^r\,\nu_p(dx)=S_{r+1}(p)\) for
\(r\ge1\). These are the standard Kingman moment coordinates, with
an index shift from \cite[Section~2.2]{petrov2009kingman}.
We record the continuity and separation properties used below.

\begin{lemma}[Continuity and identification of paintbox coordinates]
\label{lem:Kingman-simplex-continuity}
The space \(\King\) is compact metrizable. The functions \(S_k\),
\(k\ge2\), are continuous and separate its points. In particular,
\(p\mapsto\chi_p(f)\) is continuous for every \(f\in\Lambda\).
\end{lemma}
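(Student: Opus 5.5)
Compactness of \(\King\) comes first: it is a closed subset of the compact metrizable product \([0,1]^{\N}\), since the monotonicity constraints \(p_i\ge p_{i+1}\) and the truncated conditions \(\sum_{i\le n}p_i\le1\) are all closed. Continuity of \(S_k\) for \(k\ge2\) follows from uniform convergence of the partial sums on \(\King\). The key estimate is that ordering and \(\sum_i p_i\le1\) force \(p_i\le1/i\). Hence the tail satisfies
\[
\sum_{i>n}p_i^k\le p_{n+1}^{k-1}\sum_{i>n}p_i\le (n+1)^{-(k-1)},
\]
which tends to \(0\) uniformly in \(p\) because \(k\ge2\). Each partial sum is continuous in the product topology, so \(S_k\) is a uniform limit of continuous functions and is therefore continuous. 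For a general \(f\in\Lambda\), we write \(f\) as a polynomial in \(\mathsf p_1,\mathsf p_2,\ldots\). Then \(\chi_p(f)\) is the same polynomial with \(\mathsf p_1\mapsto1\) and \(\mathsf p_k\mapsto S_k(p)\), so it is continuous in \(p\).

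The main step is separation, and I would argue through the mass-weighted measures \(\nu_p\). Suppose \(S_k(p)=S_k(q)\) for all \(k\ge2\). The moments of \(\nu_p\) of order \(r\ge1\) are \(S_{r+1}(p)\), and the zeroth moment is \(1\). So \(\nu_p\) and \(\nu_q\), both probability measures on the compact interval \([0,1]\), have identical moments. By the Weierstrass approximation theorem (the Hausdorff moment problem is determinate), \(\nu_p=\nu_q\).

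It remains to recover \(p\) from \(\nu_p\). On \((0,1]\), the measure \(\nu_p\) assigns to each \(x>0\) the mass \(x\cdot\#\{i:p_i=x\}\). The multiplicity of every positive value among the \(p_i\) is therefore \(\nu_p(\{x\})/x\). Since \(p\) is nonincreasing and its positive entries are listed with multiplicity, followed by zeros, these multiplicities determine \(p\). Any dust is absorbed in the atom at \(0\) and does not need to be recovered separately. Hence \(p=q\).

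I expect the only delicate point to be the uniform tail bound, which needs \(k\ge2\). The function \(S_1\) is not continuous in the product topology, since mass can escape to dust; this is exactly why the definition sets \(\chi_p(\mathsf p_1)=1\) rather than \(\sum_i p_i\). Metrizability is inherited directly from the product.
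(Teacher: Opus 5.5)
Your proposal is correct and follows the paper's proof essentially step for step. Both arguments use closedness of \(\King\) in \([0,1]^{\N}\), the same uniform tail bound \(\sum_{i>L}p_i^k\le p_{L+1}^{k-1}\sum_{i>L}p_i\le(L+1)^{1-k}\), and separation through equality of all moments of \(\nu_p\) and \(\nu_q\), after which the multiplicities are read off as \(\nu_p(\{x\})/x\).
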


\begin{proof}
The ordering and finite partial-sum inequalities define a closed subset of
\([0,1]^{\N}\). For every \(p\in\King\),
\begin{equation}\label{eq:Kingman-uniform-moment-tail}
 \sum_{i>L}p_i^k
 \le p_{L+1}^{k-1}\sum_{i>L}p_i
 \le (L+1)^{1-k},\qquad k\ge2.
\end{equation}
Thus the power sums are uniform limits of continuous functions.

If \(S_k(p)=S_k(q)\) for all \(k\ge2\), then the probability
measures \(\nu_p\) and \(\nu_q\) have equal moments of every
nonnegative integer order. Polynomial approximation on \([0,1]\)
gives \(\nu_p=\nu_q\). For \(x>0\), its multiplicity in the
ranked mass sequence is \(\nu_p(\{x\})/x\), and the dust mass is
\(\nu_p(\{0\})\). Hence \(p=q\). Every symmetric function is a
polynomial in finitely many power sums, which also proves continuity of
\(p\mapsto\chi_p(f)\).
\end{proof}

Let \(\mathfrak P_\infty\) be the space of partitions of \(\N\),
viewed as the closed subspace of equivalence relations in
\(\{0,1\}^{\N\times\N}\), and let \(\Pi\) denote its canonical
partition. Write \(\mathfrak P_n\) for the partitions of \([n]\), with
\(\mathfrak P_0\) containing only the empty partition. A partition law
is \emph{exchangeable} if it is invariant under all permutations of
\(\N\) with finite support.
The paintbox directed by \(p\) assigns each positive integer
label independently to atom \(i\) with probability \(p_i\), or to dust
with probability \(p_0\). Labels assigned to the same positive atom form a
block; each dust label forms a new singleton. Write \(Q^p\) for the law of
this partition of \(\N\). For a finite set partition \(\pi\), let
\(\lambda(\pi)\) be its ranked block-size partition.

\begin{lemma}[Augmented monomials as partition probabilities]
\label{lem:paintbox-augmented-monomial-probabilities}
For a fixed partition \(\pi\) of \([n]\),
\begin{equation}\label{eq:paintbox-fixed-partition}
 Q^p\{\Pi|_{[n]}=\pi\}=\chi_p(\widetilde m_{\lambda(\pi)}).
\end{equation}
In particular, these specializations are nonnegative on the augmented
monomial basis.
\end{lemma}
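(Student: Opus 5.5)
The plan is a direct computation of the probability of $\{\Pi|_{[n]}=\pi\}$ under the paintbox, matched against the injection formula \eqref{eq:augmented-monomial-injection-formula}. Write $\pi=\{b_1,\dots,b_r\}$ with block sizes $\lambda_i=|b_i|$. The event $\Pi|_{[n]}=\pi$ occurs exactly when two things hold. First, every block of size $\ge2$ is painted by a single positive atom, with distinct blocks using distinct atoms. Second, every singleton block is either painted by a positive atom not used by any other block, or sent to dust. Since labels are painted independently, summing over the admissible assignments gives
\[
 Q^p\{\Pi|_{[n]}=\pi\}=\sum_{A\subseteq\{i:\lambda_i=1\}}p_0^{|A|}
 \sum_{\iota:[r]\setminus A\hookrightarrow\N}\ \prod_{i\notin A}p_{\iota(i)}^{\lambda_i}.
\]
Here $A$ is the set of singleton blocks sent to dust, and $\iota$ records the distinct atoms used by the remaining blocks.

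Next I evaluate the right side of \eqref{eq:paintbox-fixed-partition} in the same form. The specialization $\chi_p$ is the evaluation $x_i\mapsto p_i$ on the atoms, modified in $\mathsf p_1$ so that $\chi_p(\mathsf p_1)=1=p_0+\sum_ip_i$. Formally, $\chi_p$ is evaluation on the alphabet of atoms $(p_i)$ together with a ``dust'' alphabet $\mathsf d$ whose only nonzero power sum is $\mathsf p_1(\mathsf d)=p_0$. This alphabet is the exponential specialization $\mathsf p_1\mapsto p_0$, $\mathsf p_k\mapsto0$ for $k\ge2$. Because $\Delta f=f(\boldsymbol x\sqcup\boldsymbol y)$, we get $\chi_p=\mathrm{ev}_{p}\star\mathrm{ex}_{p_0}$ as a convolution of two algebra homomorphisms, each determined by its power sums.

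Applying the coproduct formula for augmented monomials, which splits the distinguished parts into ordered subsets $I\sqcup J=[r]$ (this is the symmetric-function side of \eqref{eq:path-forest-coproduct}), gives
\[
 \chi_p(\widetilde m_\lambda)=\sum_{I\sqcup J=[r]}\mathrm{ev}_p(\widetilde m_{\lambda_I})\,\mathrm{ex}_{p_0}(\widetilde m_{\lambda_J}).
\]
By \eqref{eq:augmented-monomial-injection-formula}, $\mathrm{ev}_p(\widetilde m_{\lambda_I})$ is the inner injection sum. So it remains to show that $\mathrm{ex}_{p_0}(\widetilde m_\mu)$ equals $p_0^{|J|}$ when $\mu=(1^{|J|})$ and $0$ otherwise.

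This last claim is the one step needing care. The quickest route is through the generating function. We have $\widetilde m_{(1^k)}=k!\,e_k$, and the exponential specialization sends $\sum_k e_kz^k$ to $\exp(p_0z)$, so $k!\,e_k\mapsto p_0^k$. For a $\mu$ with some part $\ge2$, I expand $\widetilde m_\mu$ in power sums. By Möbius inversion on set partitions of the distinguished parts, $\widetilde m_\mu=\sum_{\sigma}\mu(\hat0,\sigma)\prod_{B\in\sigma}\mathsf p_{\sum_{i\in B}\mu_i}$. Every term then contains a power sum of degree $\ge2$, hence vanishes under $\mathrm{ex}_{p_0}$.

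Matching the two expansions term by term, with $A=J$, proves \eqref{eq:paintbox-fixed-partition}. Nonnegativity on the augmented monomial basis follows because every partition $\lambda$ of $n$ arises as $\lambda(\pi)$ for some $\pi$, and the left side of \eqref{eq:paintbox-fixed-partition} is a probability.
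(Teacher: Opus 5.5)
Your argument is correct, but it handles dust differently from the paper. Both proofs begin with the explicit formula \eqref{eq:paintbox-explicit-dust}.

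The paper settles the case \(p_0=0\) directly from the injection formula and reaches general \(p\) by approximation. Dust is replaced by \(M\) atoms of mass \(p_0/M\). The resulting ordinary evaluations converge to \(\chi_p(\widetilde m_\lambda)\) because their power sums converge, and a coupling bounds the change in the partition probability by \(\binom n2p_0^2/M\).

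You get an exact identity instead, from three ingredients:
\begin{enumerate}
\item the factorization \(\chi_p=\mathrm{ev}_p\star\mathrm{ex}_{p_0}\), which holds because power sums are primitive, so convolution adds their values;
\item the distinguished-part coproduct of \(\widetilde m_\lambda\);
\item the evaluation of the exponential specialization on augmented monomials.
\end{enumerate}
Your M\"obius expansion already gives the \((1^k)\) case, since only the discrete partition survives, so the generating-function step is optional.

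The paper's route needs only continuity of \(\chi_p\) in the power sums, and it keeps the picture of dust as many infinitesimal atoms. Your route avoids the limit in \(M\) and yields \eqref{eq:paintbox-ordinary-monomial-dust} at once. It also makes visible the atom/dust splitting that the paper uses only later, in \eqref{eq:positive-character-convolution}. Since you prove the factorization directly from primitivity, there is no circularity.

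One step needs a sentence. \(\mathrm{ev}_p\) is defined through the power sums \(\sum_ip_i^k\), and its agreement on \(\widetilde m_\mu\) with the infinite injection series does not follow directly from \eqref{eq:augmented-monomial-injection-formula}, which is a formal identity. As the paper does, truncate to \((p_1,\ldots,p_N,0,\ldots)\) and let \(N\to\infty\). The series is dominated by \(\prod_a\sum_ip_i^{\mu_a}\le1\), so it converges absolutely and the limits agree.
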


\begin{proof}
Write the block sizes as \(\lambda_1,\ldots,\lambda_\ell\), treating
the blocks as distinguished, and let \(J=\{a:\lambda_a=1\}\). Only
singleton blocks may come from dust. Choosing the subset \(D\subseteq J\)
of such blocks and assigning the remaining blocks to distinct positive atoms
gives
\begin{equation}\label{eq:paintbox-explicit-dust}
 Q^p\{\Pi|_{[n]}=\pi\}
 =\sum_{D\subseteq J}p_0^{|D|}
   \sum_{\iota:[\ell]\setminus D\hookrightarrow\N}\,
       \prod_{a\notin D}p_{\iota(a)}^{\lambda_a}.
\end{equation}
We set empty products and \(p_0^0\) equal to one; the unique empty
injection contributes one.
When \(p_0=0\), the right side of
\eqref{eq:paintbox-explicit-dust} reduces to
\[
 \sum_{\iota:[\ell]\hookrightarrow\N}
       \prod_{a=1}^{\ell}p_{\iota(a)}^{\lambda_a}.
\]
This is the evaluation of \(\widetilde m_\lambda\) by
\eqref{eq:augmented-monomial-injection-formula}. The series converges since
it is bounded by \(\prod_a\sum_i p_i^{\lambda_a}\le1\); its equality
with symmetric-function evaluation follows by truncating the variables.

For general \(p\), replace dust by \(M\) additional atoms of mass
\(p_0/M\), leaving the original atoms unchanged. The resulting ordinary
evaluation has first power sum one and, for \(k\ge2\), power sum
\[
 S_k(p)+M(p_0/M)^k\longrightarrow S_k(p).
\]
Thus its value on \(\widetilde m_\lambda\) tends to
\(\chi_p(\widetilde m_\lambda)\). Couple the partitions by assigning
each dust label independently to one of the \(M\) new atoms. On \([n]\)
they can differ only when two labels choose the same new atom, an event of
probability at most \(\binom n2 p_0^2/M\). Passing to the limit proves
\eqref{eq:paintbox-fixed-partition}.
\end{proof}

For ordinary monomials, let \(\lambda\setminus1^k\)
denote the partition obtained by deleting \(k\) singleton parts. Grouping
\eqref{eq:paintbox-explicit-dust} by \(k=|D|\) and using
\(\binom{r_1(\lambda)}k a_{\lambda\setminus1^k}/a_\lambda=1/k!\)
gives
\begin{equation}\label{eq:paintbox-ordinary-monomial-dust}
 \chi_p(m_\lambda)
 =\sum_{k=0}^{r_1(\lambda)}\frac{p_0^k}{k!}\,
       m_{\lambda\setminus1^k}(p_1,p_2,\ldots).
\end{equation}
The terms on the right are ordinary evaluations on the atom sequence,
whose first power sum is \(1-p_0\). Formula
\eqref{eq:paintbox-ordinary-monomial-dust} is the Kingman boundary
kernel in the monomial normalization of
\cite[Equation~(4.8)]{borodin2000harmonic}.

The finite-partition cylinders are clopen and determine weak convergence on
\(\mathfrak P_\infty\). Hence \eqref{eq:paintbox-fixed-partition}
and Lemma~\ref{lem:Kingman-simplex-continuity} show that
\(p\mapsto Q^p\) is weakly continuous and hence defines a Borel probability
kernel.

\subsection{Positive Pieri functionals on symmetric functions}

The following identities express normalization and restriction consistency
of partition probabilities in the augmented monomial basis
\cite{pitman1995exchangeable,borodin2000harmonic}:
\begin{align}
 \mathsf p_1^n
   &=\sum_{\pi\in\mathfrak P_n}\widetilde m_{\lambda(\pi)},
       \label{eq:set-partition-expansion-p1-power}\\
 \mathsf p_1\widetilde m_{\lambda(\pi)}
   &=\sum_{\substack{\pi'\in\mathfrak P_{n+1}\\
                     \pi'|_{[n]}=\pi}}
       \widetilde m_{\lambda(\pi')}.
       \label{eq:set-partition-augmented-pieri}
\end{align}
For the first identity, expand \((\sum_jx_j)^n\) and group maps
\([n]\to\N\) by their kernel partitions. For the second, multiplying
the injection formula by \(\sum_jx_j\) either places the new label in one
of the existing distinguished blocks or gives it a new variable and hence a
singleton block. Equal block sizes are counted separately before collecting
integer-partition types.

Let \(\mathscr S_{\mathrm{Pieri}}\) be the set of linear functionals
\(\ell:\Lambda\to\R\) satisfying
\begin{equation}\label{eq:Pieri-state-conditions}
 \ell(1)=1,\qquad \ell(\widetilde m_\lambda)\ge0,
 \qquad \ell(\mathsf p_1f)=\ell(f)\quad(f\in\Lambda).
\end{equation}
We give this space the topology of pointwise convergence. A compact convex
set is a \emph{Bauer simplex} if its extreme points form a closed set and
each point is the barycentre of a unique probability measure on that set.

\begin{proposition}[Paintbox representation of positive Pieri functionals]
\label{prop:path-forest-Pieri-state-simplex}
The map
\begin{equation}\label{eq:Pieri-state-barycentre}
 \rho\longmapsto\ell_\rho,\qquad
 \ell_\rho(f)=\int_{\King}\chi_p(f)\,\rho(dp),
\end{equation}
is an affine homeomorphism from \(\mathcal P(\King)\) onto
\(\mathscr S_{\mathrm{Pieri}}\). In particular, this space is a Bauer
simplex, with extreme points \(\chi_p\), \(p\in\King\).
\end{proposition}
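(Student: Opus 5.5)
The plan is to transport positive Pieri functionals to exchangeable partition laws and then invoke Kingman's representation theorem.

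\emph{Well-definedness.} First check that each \(\ell_\rho\) lies in \(\mathscr S_{\mathrm{Pieri}}\). The map \(p\mapsto\chi_p(f)\) is continuous and bounded on the compact space \(\King\) by Lemma~\ref{lem:Kingman-simplex-continuity}, so the integral is defined. The normalization \(\ell_\rho(1)=1\) is clear. Nonnegativity on \(\widetilde m_\lambda\) follows from Lemma~\ref{lem:paintbox-augmented-monomial-probabilities}. The Pieri identity holds because \(\chi_p\) is multiplicative with \(\chi_p(\mathsf p_1)=1\). The map \(\rho\mapsto\ell_\rho\) is affine, and it is continuous for the weak topology, again because each \(\chi_p(f)\) is continuous in \(p\).

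\emph{Surjectivity.} Given \(\ell\in\mathscr S_{\mathrm{Pieri}}\), define for every \(n\) and every \(\pi\in\mathfrak P_n\)
\[
 P_n(\pi)=\ell(\widetilde m_{\lambda(\pi)}).
\]
By \eqref{eq:set-partition-expansion-p1-power} and \(\ell(\mathsf p_1^n)=1\), each \(P_n\) is a probability law on \(\mathfrak P_n\). It depends only on block sizes, so it is exchangeable. Applying \(\ell\) to \eqref{eq:set-partition-augmented-pieri} and using \(\ell(\mathsf p_1 f)=\ell(f)\) shows that the family is consistent under restriction. Kolmogorov extension then gives an exchangeable law on \(\mathfrak P_\infty\), and Kingman's representation theorem \cite{kingman1978representation,pitman2006combinatorial} gives \(\rho\in\mathcal P(\King)\) with this law equal to \(\int Q^p\,\rho(dp)\). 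By \eqref{eq:paintbox-fixed-partition}, \(\ell\) and \(\ell_\rho\) agree on every \(\widetilde m_\lambda\), since every partition \(\lambda\) is \(\lambda(\pi)\) for some \(\pi\). The augmented monomials form a basis of \(\Lambda\), so \(\ell=\ell_\rho\).

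\emph{Injectivity and homeomorphism.} Suppose \(\ell_\rho=\ell_{\rho'}\). Then the mixtures \(\int Q^p\rho(dp)\) and \(\int Q^p\rho'(dp)\) agree on all finite cylinders, hence as laws on \(\mathfrak P_\infty\). Uniqueness of the Kingman mixing measure, obtained from almost-sure ranked asymptotic block frequencies, then gives \(\rho=\rho'\). Alternatively, injectivity can be argued from the moment side. The values \(\ell_\rho(\mathsf p_{k_1}\cdots\mathsf p_{k_r})\) are mixed moments of the continuous functions \(S_k\). These functions generate a point-separating unital subalgebra of \(C(\King)\), which is dense by Stone--Weierstrass using Lemma~\ref{lem:Kingman-simplex-continuity}. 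Hence \(\rho\) is determined.

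The map is then a continuous affine bijection from the compact space \(\mathcal P(\King)\) onto \(\mathscr S_{\mathrm{Pieri}}\). That target is Hausdorff in the pointwise topology, so the map is a homeomorphism.

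\emph{Bauer simplex.} The extreme points of \(\mathcal P(\King)\) are the Dirac masses, which form a closed set homeomorphic to \(\King\), and \(\mathcal P(\King)\) is a Bauer simplex. These properties transfer through the affine homeomorphism, so \(\mathscr S_{\mathrm{Pieri}}\) is a Bauer simplex whose extreme points are the \(\chi_p\). The step needing most care is invoking Kingman's theorem in the present normalization, in particular checking that \(\ell(\widetilde m_{\lambda(\pi)})\) is the fixed-partition probability rather than the probability of a block-size type. Identity \eqref{eq:set-partition-augmented-pieri}, whose sum runs over set partitions, is exactly what confirms this.
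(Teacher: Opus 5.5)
Your proposal is correct and follows the paper's proof essentially step for step. You show that each \(\ell_\rho\) satisfies the conditions using the paintbox lemma. You turn \(\ell\) into a consistent exchangeable partition law via the two set-partition identities, and obtain surjectivity from Kingman's theorem, the fixed-partition formula and the basis property. The homeomorphism then follows from compactness of the domain and the Hausdorff target. The only difference is that you state injectivity explicitly, which the paper leaves implicit in the uniqueness clause of Kingman's theorem, and you add an alternative Stone--Weierstrass argument via the moments \(S_k\); both are valid.
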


\begin{proof}
Every \(\ell_\rho\) satisfies \eqref{eq:Pieri-state-conditions} by
Lemma~\ref{lem:paintbox-augmented-monomial-probabilities} and
\(\chi_p(\mathsf p_1)=1\). Conversely, let
\(\ell\in\mathscr S_{\mathrm{Pieri}}\). Assign to each
\(\pi\in\mathfrak P_n\) the weight
\(w_n(\pi)=\ell(\widetilde m_{\lambda(\pi)})\). These weights are
nonnegative and depend only on the block sizes. The two identities above
give, respectively,
\[
 \sum_{\pi\in\mathfrak P_n}w_n(\pi)=\ell(\mathsf p_1^n)=1,
 \qquad
 \sum_{\pi':\,\pi'|_{[n]}=\pi}w_{n+1}(\pi')=w_n(\pi).
\]
They therefore define an exchangeable random partition of \(\N\).
Kingman's representation theorem
\cite{kingman1978representation,pitman2006combinatorial} gives a unique
\(\rho\in\mathcal P(\King)\) such that its law is
\(\int Q^p\,\rho(dp)\). By
\eqref{eq:paintbox-fixed-partition},
\[
 \ell(\widetilde m_\lambda)
   =\int_{\King}\chi_p(\widetilde m_\lambda)\,\rho(dp).
\]
The augmented monomials are a basis, so \(\ell=\ell_\rho\).

Continuity follows from Lemma~\ref{lem:Kingman-simplex-continuity}.
The domain is compact and the pointwise topology is Hausdorff, so this affine
bijection is a homeomorphism. Since the extreme points of
\(\mathcal P(\King)\) are its point masses, and they form a closed set,
the extreme-point description follows.
\end{proof}

\subsection{Classification of characters and two-sided states}

\begin{theorem}[Normalized basis-positive characters]
\label{thm:positive-GL-characters}
The normalized basis-positive characters of \(\GL\) are
\(\psi_p=\chi_p\circ\mathsf Q\), \(p\in\King\). Equivalently,
\[
 \psi_p(F_{P_\lambda})=\chi_p(\widetilde m_\lambda),\qquad
 \psi_p(F_t)=0\quad\text{if }t\text{ is not path-forest}.
\]
The parameter is unique, and \(p\mapsto\psi_p\) is a homeomorphism onto
the character space with pointwise convergence on the tree basis.
\end{theorem}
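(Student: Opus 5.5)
The proof runs in two directions, and I would first show that each \(\psi_p\) is a normalized basis-positive character. Since \(\mathsf Q\) is an algebra morphism by Proposition~\ref{prop:path-forest-hopf-quotient} and \(\chi_p\) is a unital algebra homomorphism, the composite \(\psi_p=\chi_p\circ\mathsf Q\) is multiplicative. Because \(\mathsf Q(F_\tau)=\mathsf p_1\), it also satisfies \(\psi_p(F_\tau)=\chi_p(\mathsf p_1)=1\). On the tree basis, \(\psi_p\) vanishes on non-path-forest trees, since \(\ker\mathsf Q=\Ipf\). On path-forest trees it takes the values \(\psi_p(F_{P_\lambda})=\chi_p(\widetilde m_\lambda)\), and these are nonnegative by Lemma~\ref{lem:paintbox-augmented-monomial-probabilities}. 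This proves basis positivity and gives the displayed formulas.

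Conversely, let \(\psi\) be a normalized basis-positive character. Theorem~\ref{thm:GL-vanishing} gives \(\psi(\Ipf)=0\), so \(\psi=\ell\circ\mathsf Q\) for a unique linear functional \(\ell\) on \(\Lambda\), using \(\GL/\Ipf\cong\Lambda\). Multiplicativity of \(\psi\) and surjectivity of \(\mathsf Q\) make \(\ell\) multiplicative. Normalization gives \(\psi(F_\one)=1\), because the zero map is excluded, and hence \(\ell(1)=1\). It also gives \(\ell(\mathsf p_1)=1\), so \(\ell(\mathsf p_1f)=\ell(f)\) for every \(f\in\Lambda\). Positivity on the classes \(\overline F_{P_\lambda}\) gives \(\ell(\widetilde m_\lambda)\ge0\). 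Thus \(\ell\in\mathscr S_{\mathrm{Pieri}}\), and Proposition~\ref{prop:path-forest-Pieri-state-simplex} yields a unique \(\rho\in\mathcal P(\King)\) with \(\ell=\int\chi_p\,\rho(dp)\).

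The main step is showing that \(\rho\) is a point mass. Multiplicativity gives
\[
 \ell(f^2)=\ell(f)^2
 \qquad(f\in\Lambda),
\]
so \(\int\chi_p(f)^2\,\rho(dp)=\bigl(\int\chi_p(f)\,\rho(dp)\bigr)^2\). Hence the variance of \(p\mapsto\chi_p(f)\) under \(\rho\) vanishes. Apply this with \(f=\mathsf p_k\) for \(k\ge2\): each \(S_k\) is \(\rho\)-almost surely constant. Countably many conditions still leave a full-measure set on which all \(S_k\) take fixed values. By Lemma~\ref{lem:Kingman-simplex-continuity} the \(S_k\) separate points, so this set is a single point \(p\), and \(\rho=\delta_p\). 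Therefore \(\psi=\psi_p\).

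For uniqueness, \(\psi_p=\psi_q\) gives \(\chi_p=\chi_q\) on \(\Lambda\), since \(\mathsf Q\) is surjective. Evaluating on the power sums then gives \(S_k(p)=S_k(q)\) for all \(k\ge2\), so \(p=q\) by separation. Each coordinate \(p\mapsto\psi_p(F_t)=\chi_p(\mathsf Q F_t)\) is continuous by Lemma~\ref{lem:Kingman-simplex-continuity}, so the parametrization is a continuous bijection. Its domain \(\King\) is compact and the pointwise topology is Hausdorff, so it is a homeomorphism onto the character space.
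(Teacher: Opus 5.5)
Your proposal is correct and follows the paper's own proof essentially step by step. You factor through \(\mathsf Q\) via Theorem~\ref{thm:GL-vanishing}, represent the quotient character as a paintbox mixture via Proposition~\ref{prop:path-forest-Pieri-state-simplex}, force the mixing measure to be a point mass by the zero-variance identity for \(\mathsf p_k\), \(k\ge2\), and conclude with the compact-to-Hausdorff argument; your explicit uniqueness step through surjectivity of \(\mathsf Q\) and separation by the \(S_k\) just spells out what the paper states in one line.
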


\begin{proof}
The displayed maps are multiplicative because \(\mathsf Q\) and
\(\chi_p\) are algebra morphisms. They are normalized and basis-positive
by Lemma~\ref{lem:paintbox-augmented-monomial-probabilities}.
Conversely, by Theorem~\ref{thm:GL-vanishing}, any normalized basis-positive
character \(\psi\) factors through \(\mathsf Q\), giving a character
\(\chi\) with \(\chi(1)=\chi(\mathsf p_1)=1\) and
\(\chi(\widetilde m_\lambda)=\psi(F_{P_\lambda})\ge0\).
Multiplicativity gives \(\chi(\mathsf p_1f)=\chi(f)\), so
\(\chi\in\mathscr S_{\mathrm{Pieri}}\). Write
\(\chi=\ell_\rho\) by Proposition~\ref{prop:path-forest-Pieri-state-simplex}.
For each \(k\ge2\), multiplicativity implies
\begin{equation}\label{eq:character-mixing-variance}
 \int S_k(p)^2\,\rho(dp)=\chi(\mathsf p_k^2)
 =\chi(\mathsf p_k)^2
 =\left(\int S_k(p)\,\rho(dp)\right)^2.
\end{equation}
Each \(S_k\) is therefore constant almost surely. On a common set of
\(\rho\)-measure one all these countably many functions are constant;
by Lemma~\ref{lem:Kingman-simplex-continuity} this set has only one
point. Thus \(\rho=\delta_p\), proving the classification and uniqueness.
The coordinate functions of \(p\mapsto\psi_p\) are continuous, and a
continuous bijection from compact \(\King\) to the Hausdorff character
space is a homeomorphism.
\end{proof}

\begin{theorem}[Two-sided Pieri characterization]
\label{thm:two-sided-Pieri-characterization}
Let \(L:\GL\to\R\) be a normalized positive left Pieri state. The
following are equivalent:
\begin{enumerate}[label=(\roman*)]
\item \(L(F_{C_m}F_\tau)=L(F_{C_m})\) for every \(m\ge3\);
\item \(L(xF_\tau)=L(x)\) for every \(x\in\GL\);
\item \(L(xy)=L(yx)\) for every \(x,y\in\GL\);
\item \(L(\Ipf)=0\);
\item there is a unique \(\rho\in\mathcal P(\King)\) such that
\begin{equation}\label{eq:two-sided-Pieri-mixture}
 L(x)=\int_{\King}\psi_p(x)\,\rho(dp)\qquad(x\in\GL).
\end{equation}
\end{enumerate}
Moreover, for each \(m\ge3\), there exists a normalized positive left Pieri
state for which the identity in \textup{(i)} fails at level \(m\) and holds
at every other level. The mixture map is an
affine homeomorphism onto the two-sided states, whose extreme points are the
normalized basis-positive characters.
\end{theorem}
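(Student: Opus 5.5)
The plan is to close the cycle (i)$\Leftrightarrow$(iv), (iv)$\Rightarrow$(v)$\Rightarrow$(iii)$\Rightarrow$(ii)$\Rightarrow$(i), using the results already established.

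The equivalence (i)$\Leftrightarrow$(iv) is exactly the last assertion of Lemma~\ref{lem:chain-Pieri-defects}. Condition (i) says $\delta_m(L)=0$ for all $m\ge3$, and the bound \eqref{eq:chain-defect-descendant-bound} then gives vanishing on every non-path-forest tree. For (ii)$\Rightarrow$(i) one simply specializes $x=F_{C_m}$. For (iii)$\Rightarrow$(ii), take $y=F_\tau$ and combine the trace identity with the left Pieri identity: $L(xF_\tau)=L(F_\tau x)=L(x)$.

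For (iv)$\Rightarrow$(v), the state factors through $\mathsf Q$ as $L=\ell\circ\mathsf Q$, since $\ker\mathsf Q=\Ipf$. We then check that $\ell$ lies in $\mathscr S_{\mathrm{Pieri}}$:
\begin{itemize}
\item normalization holds because $\ell(1)=L(F_\one)=1$;
\item positivity holds because $\ell(\widetilde m_\lambda)=L(F_{P_\lambda})\ge0$;
\item the Pieri condition holds because $\mathsf Q(F_\tau)=\mathsf p_1$ and $\mathsf Q$ is a surjective algebra map, so $\ell(\mathsf p_1 f)=L(F_\tau x)=L(x)=\ell(f)$ for any lift $x$ of $f$.
\end{itemize}
Proposition~\ref{prop:path-forest-Pieri-state-simplex} then gives a unique $\rho$ with $\ell=\ell_\rho$, which is \eqref{eq:two-sided-Pieri-mixture}. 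Uniqueness of $\rho$ on $\GL$ follows because $L\mapsto\ell$ is injective on states that vanish on $\Ipf$.

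For (v)$\Rightarrow$(iii), each $\psi_p=\chi_p\circ\mathsf Q$ factors through the commutative algebra $\Lambda$, so $\psi_p(xy)=\psi_p(yx)$. Integrating this identity against $\rho$ gives the trace property.

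The remaining assertions follow from earlier results. The independence statement is Proposition~\ref{prop:extremal-non-character} with $h=m-2$, since it gives $\delta_{m'}(L_h)=\mathbf1_{\{m'=m\}}$. For the mixture map, write it as $\rho\mapsto\ell_\rho\circ\mathsf Q$, which is the homeomorphism of Proposition~\ref{prop:path-forest-Pieri-state-simplex} composed with the injective affine pullback along $\mathsf Q$. Pointwise continuity holds on the tree basis: $L(F_t)$ is either $0$ or $\ell(\widetilde m_\lambda)$. A continuous bijection from a compact space to a Hausdorff space is a homeomorphism. The extreme points are the images of Dirac masses, namely the $\psi_p$, and these are exactly the normalized basis-positive characters by Theorem~\ref{thm:positive-GL-characters}.

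No step is a genuine obstacle, since the content was front-loaded into the earlier lemmas. The only point needing care is the well-definedness of the Pieri condition for $\ell$, where lifts are used. Any two lifts differ by an element $z\in\Ipf$, and $F_\tau z\in\Ipf$ by Proposition~\ref{prop:path-forest-hopf-ideal}, so both sides of the Pieri identity are independent of the choice of lift.
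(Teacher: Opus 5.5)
Your proof is correct and follows essentially the paper's route: (i)$\Leftrightarrow$(iv) comes from Lemma~\ref{lem:chain-Pieri-defects}, (iv)$\Rightarrow$(v) from Proposition~\ref{prop:path-forest-Pieri-state-simplex} via the factorization $L=\ell\circ\mathsf Q$, and independence from Proposition~\ref{prop:extremal-non-character}. The only difference is the ordering: you get the trace property from (v) by integrating $\psi_p(xy)=\psi_p(yx)$, whereas the paper gets it from (iv) through commutativity of $\Lambda$ and closes (v)$\Rightarrow$(iv) by noting that each $\psi_p$ vanishes on $\Ipf$.
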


\begin{proof}
Condition (i) says that \(\delta_m(L)=0\) for every \(m\ge3\).
Lemma~\ref{lem:chain-Pieri-defects} then gives (iv).

If (iv) holds, \(L\) factors uniquely as \(\ell\circ\mathsf Q\).
The quotient is commutative, so for \(x,y\in\GL\),
\[
 L(xy)=\ell(\mathsf Q(x)\mathsf Q(y))
      =\ell(\mathsf Q(y)\mathsf Q(x))=L(yx).
\]
This proves (iii). Condition (iii) and the left Pieri identity give
\(L(xF_\tau)=L(F_\tau x)=L(x)\), proving (ii), which implies (i)
by restriction to chains. The cases \(C_1=\one\) and
\(C_2=\tau\) already satisfy the right identity by left invariance.
For each \(m\ge3\),
Proposition~\ref{prop:extremal-non-character} provides a normalized positive
left Pieri state satisfying every chain identity except the one at level
\(m\).

For a functional satisfying (iv), its quotient \(\ell\) satisfies
\(\ell(1)=L(F_\one)=1\) and
\(\ell(\widetilde m_\lambda)=L(F_{P_\lambda})\ge0\).
For the Pieri identity, choose \(x\) with \(\mathsf Q(x)=f\) and use
\[
 \ell(\mathsf p_1f)=L(F_\tau x)=L(x)=\ell(f).
\]
Thus \(\ell\in\mathscr S_{\mathrm{Pieri}}\), and
Proposition~\ref{prop:path-forest-Pieri-state-simplex} gives the unique
representation (v). Conversely, every mixture in (v) satisfies (iv), since
all \(\psi_p\) vanish on \(\Ipf\). The topology of the mixture
space and its extreme points follow from the same proposition under the
identification \(L=\ell\circ\mathsf Q\).
\end{proof}

\begin{example}[A nonmultiplicative mixture]
\label{ex:two-sided-nonmultiplicative}
Let \(p^{\mathrm d}=(0,0,\ldots)\) be the pure-dust parameter and
\(p^{\mathrm a}=(1,0,0,\ldots)\) the one-atom parameter. Their
central laws are concentrated, respectively, on the star path
\(P_{(1^N)}\) and the chain path \(C_{N+1}\), \(N\ge0\). The functional
\(L=\tfrac12(\psi_{p^{\mathrm d}}+\psi_{p^{\mathrm a}})\)
is a two-sided Pieri state and a trace. Since
\(\psi_{p^{\mathrm d}}(F_{C_3})=0\) and
\(\psi_{p^{\mathrm a}}(F_{C_3})=1\), it satisfies
\[
 L(F_{C_3}^2)=\tfrac12
       \ne\tfrac14=L(F_{C_3})^2.
\]
Thus a two-sided state need not be a character.
\end{example}

\begin{remark}[Traces and the left Pieri condition]
\label{rem:traces-left-Pieri}
Even for unital basis-positive traces, the normalization
\(L(F_\tau)=1\) does not imply the left Pieri condition in
Theorem~\ref{thm:two-sided-Pieri-characterization}.
Fix \(p\in\King\). For \(c>0\), define
\(\psi_p^{(c)}(F_t)=c^{|t|-1}\psi_p(F_t)\).
The grading makes each \(\psi_p^{(c)}\) a unital basis-positive
character. Hence
\(L_0=\tfrac12(\psi_p^{(1/2)}+\psi_p^{(3/2)})\)
is a basis-positive trace satisfying
\(L_0(F_\one)=L_0(F_\tau)=1\), whereas
\(L_0(F_\tau^2)=5/4\).
Thus \(L_0\) is not a left Pieri state.
\end{remark}

\subsection{The common kernel and uniform completion}

We now determine the common kernel of the normalized characters and identify
the resulting quotient with the moment-coordinate algebra
\cite[Sections~2.2--2.3]{petrov2009kingman}. Put
\[
 \mathcal J_1=\Ipf+\langle F_\tau-F_\one\rangle_{\mathrm{2s}},
\]
where the brackets denote the generated two-sided algebra ideal.

\begin{proposition}[The normalized-character kernel]
\label{prop:normalized-positive-character-kernel-completion}
One has
\begin{equation}\label{eq:normalized-positive-character-kernel}
 \bigcap_{p\in\King}\ker\psi_p=\mathcal J_1,
 \qquad \GL/\mathcal J_1\cong\Lambda/(\mathsf p_1-1).
\end{equation}
Evaluation embeds this quotient as \(\R[S_2,S_3,\ldots]\), uniformly
dense in \(C(\King)\). Its completion for
\(\|[x]\|_+=\sup_{p\in\King}|\psi_p(x)|\) is therefore
\(C(\King)\).
\end{proposition}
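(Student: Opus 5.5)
We first show \(\mathcal J_1\subseteq\bigcap_p\ker\psi_p\). Each \(\psi_p\) vanishes on \(\Ipf\) by Theorem~\ref{thm:positive-GL-characters}. Each \(\psi_p\) is a unital character with \(\psi_p(F_\tau)=1\), so \(\psi_p(x(F_\tau-F_\one)y)=\psi_p(x)\cdot0\cdot\psi_p(y)=0\). Hence every \(\psi_p\) kills \(\mathcal J_1\).

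For the reverse inclusion, pass to the quotient by \(\Ipf\) via \(\mathsf Q\) (Proposition~\ref{prop:path-forest-hopf-quotient}). Since \(\ker\mathsf Q=\Ipf\subseteq\mathcal J_1\) and \(\mathsf Q\) is surjective, \(\mathsf Q\) maps the two-sided ideal generated by \(F_\tau-F_\one\) onto the ideal \((\mathsf p_1-1)\subseteq\Lambda\). It follows that \(\mathcal J_1=\mathsf Q^{-1}((\mathsf p_1-1))\) and \(\GL/\mathcal J_1\cong\Lambda/(\mathsf p_1-1)\). Because \(\psi_p=\chi_p\circ\mathsf Q\), it then suffices to prove
\[
 \bigcap_{p\in\King}\ker\chi_p=(\mathsf p_1-1)\quad\text{in }\Lambda.
\]
Write \(\Lambda=\R[\mathsf p_1,\mathsf p_2,\ldots]\). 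Division by the monic linear polynomial \(\mathsf p_1-1\) shows that every \(f\) is congruent to a unique polynomial \(g(\mathsf p_2,\mathsf p_3,\ldots)\). So \(\Lambda/(\mathsf p_1-1)\cong\R[\mathsf p_2,\mathsf p_3,\ldots]\), and \(\chi_p(f)=g(S_2(p),S_3(p),\ldots)\).

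The main step, and the only real obstacle, is injectivity of evaluation: if \(g(S_2(p),S_3(p),\ldots)=0\) for all \(p\in\King\), then \(g=0\). In other words, \(S_2,\ldots,S_N\) must be algebraically independent as functions on \(\King\). I would restrict to finitely supported \(p=(x_1,\ldots,x_n,0,\ldots)\) with \(x_i\) distinct, positive, and of small total sum (dust is allowed, so the constraint \(\sum x_i\le1\) is harmless). Ranking is irrelevant because the \(S_k\) are symmetric. The map \((x_1,\ldots,x_n)\mapsto(S_2,\ldots,S_{n+1})\) has Jacobian \(\det(k x_i^{k-1})_{k=2..n+1,\,i}\). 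This equals \(\prod_k k\cdot\prod_i x_i\) times a Vandermonde determinant, so it is nonzero on an open set. Taking \(n=N-1\), the image of this map contains an open subset of \(\R^{N-1}\), and a polynomial vanishing on an open set is zero. This proves the kernel identity and identifies the quotient with \(\R[S_2,S_3,\ldots]\subseteq C(\King)\); continuity of these functions is Lemma~\ref{lem:Kingman-simplex-continuity}.

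Finally, \(\R[S_2,S_3,\ldots]\) is a unital subalgebra of \(C(\King)\) that separates points by Lemma~\ref{lem:Kingman-simplex-continuity}. Since \(\King\) is compact metrizable, Stone--Weierstrass gives uniform density. The seminorm \(\|[x]\|_+\) is exactly the supremum norm of the image function, and it is a genuine norm on \(\GL/\mathcal J_1\) by the injectivity just proved. Hence the completion is \(C(\King)\).
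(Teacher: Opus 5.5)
Your proposal is correct and follows essentially the same route as the paper. The shared steps are the reduction through $\mathsf Q$ (your surjectivity argument is the paper's lifting $x-(F_\tau-F_\one)y\in\Ipf$ in different words), the Jacobian/Vandermonde inverse-function argument on finitely supported parameters with dust, and Stone--Weierstrass. You additionally spell out the easy inclusion $\mathcal J_1\subseteq\bigcap_p\ker\psi_p$ and why $\|[x]\|_+$ is a genuine norm, both of which the paper leaves implicit.
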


\begin{proof}
We first show that the \(\chi_p\) separate
\(\Lambda/(\mathsf p_1-1)\). Write a class as a polynomial
\(g(\mathsf p_2,\ldots,\mathsf p_d)\). For \(d\ge2\), take
\(x_1>\cdots>x_{d-1}>0\) with \(\sum_{j<d}x_j<1\). These
inequalities define a nonempty open set in \(\R^{d-1}\). For the parameter
\(p=(x_1,\ldots,x_{d-1},0,\ldots)\), the specialization satisfies
\(\chi_p(\mathsf p_1)=1\), and its higher moments are given by
\[
 (x_1,\ldots,x_{d-1})\longmapsto
 \left(\sum_{j<d}x_j^2,\ldots,\sum_{j<d}x_j^d\right).
\]
Its Jacobian \(J\), with rows indexed by \(2\le k\le d\) and
columns by \(1\le j<d\), satisfies
\begin{equation}\label{eq:moment-map-Jacobian}
 \begin{aligned}
 J_{k,j}&=kx_j^{k-1},\\
 \det J&=d!\left(\prod_{j=1}^{d-1}x_j\right)
               \prod_{1\le i<j\le d-1}(x_j-x_i)\ne0.
 \end{aligned}
\end{equation}
Factoring \(k\) from each row and \(x_j\) from each column leaves
Vandermonde rows of degrees \(0,\ldots,d-2\).
The inverse function theorem applied to \eqref{eq:moment-map-Jacobian}
gives an open set of moment values. A polynomial vanishing at all paintbox
moments vanishes on this open set and is zero. The case of a constant
polynomial is immediate. Hence the common kernel on \(\Lambda\) is
\((\mathsf p_1-1)\).

Its inverse image under \(\mathsf Q\) is \(\mathcal J_1\). Indeed,
if \(\mathsf Q(x)=(\mathsf p_1-1)f\), choose \(y\) with
\(\mathsf Q(y)=f\). Then \(x-(F_\tau-F_\one)y\in\Ipf\).
This proves \eqref{eq:normalized-positive-character-kernel} and injectivity
of evaluation. The image is \(\R[S_2,S_3,\ldots]\); it contains the
constants and separates points by
Lemma~\ref{lem:Kingman-simplex-continuity}. Stone--Weierstrass gives
the stated density and completion.
\end{proof}

\section{The Kingman face and its Martin boundary}
\label{sec:kingman-face}

The block-to-chain correspondence realizes the classified states as central
laws supported on path-forest trees. We identify their exposed face and
describe the Martin boundary of the path-forest subgraph through
finite-population sampling.

\subsection{Central paintbox laws and the exposed face}

Let \(\Gamma_{\mathrm{pf}}\) be the subgraph of path-forest trees, with
the inherited edge multiplicities. Its level \(N\) consists of
\(P_\lambda\), \(\lambda\vdash N\). Its path space is the closed
subset \(\mathcal X_{\Gamma_{\mathrm{pf}}}\) of
\(\mathcal X_\Gamma\) consisting of paths for which \(X_n\) is path-forest
for every \(n\ge1\). Once non-root branching appears, leaf growth cannot
remove it. Hence every path ending at a path-forest tree has stayed
in this subgraph. Its path dimensions and cotransitions therefore agree
with those inherited from \(\Gamma\).

Fix a compatible path--quotient-labelling identification as in
Lemma~\ref{lem:path-quotient-labelling-bijection}.

\begin{lemma}[The block-to-chain correspondence]
\label{lem:block-chain-correspondence}
Weighted paths to \(P_\lambda\), \(\lambda\vdash N\), correspond to
partitions of \([N]\) of type \(\lambda\), compatibly with restriction.
The resulting homeomorphism
\(F:\mathfrak P_\infty\to\mathcal X_{\Gamma_{\mathrm{pf}}}\)
identifies exchangeable partition laws with central path laws. In particular,
\begin{equation}\label{eq:path-forest-dimension-formula}
 u(P_\lambda)=\frac{N!}{\prod_i\lambda_i!\,a_\lambda}.
\end{equation}
\end{lemma}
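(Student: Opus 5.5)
We build the correspondence level by level, using the quotient-labelling picture from Lemma~\ref{lem:path-quotient-labelling-bijection}. A quotient labelling of \(P_\lambda\) uses the labels \(\{0,\ldots,N\}\), with the root labelled \(0\). The labels \(1,\ldots,N\) are distributed among the chain branches, increasing along each chain. Since labels increase down a chain, the labelling of a branch is determined by its label set. Taking the quotient by \(\Aut(P_\lambda)\), which permutes branches of equal size, therefore forgets the order of the branches. This gives a bijection
\[
 \QSL(P_\lambda)\;\longleftrightarrow\;\{\pi\in\mathfrak P_N:\lambda(\pi)=\lambda\},
\]
sending a labelling to the partition of \([N]\) into the label sets of the branches. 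We also check that the action is free, as the paper asserts in general: a nontrivial permutation of equal-size branches moves their label sets.

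The second step is compatibility with restriction. Deleting the largest label \(N\) removes the leaf of the branch containing \(N\), or deletes that branch if it was a single vertex. On partitions, this removes \(N\) from its block, or removes the block \(\{N\}\), which is exactly restriction to \([N-1]\). Composing with the fixed compatible path identification, we obtain bijections between weighted paths to level-\(N\) path-forest trees and \(\mathfrak P_N\), commuting with truncation. Passing to the inverse limit gives a bijection \(F:\mathfrak P_\infty\to\mathcal X_{\Gamma_{\mathrm{pf}}}\).

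For continuity, we note that prefix cylinders correspond to finite-restriction cylinders \(\{\Pi|_{[N]}=\pi\}\). Both families are clopen bases, so \(F\) is a homeomorphism.

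For the measures, a path law \(\mu\) on \(\mathcal X_{\Gamma_{\mathrm{pf}}}\) is central exactly when the cylinder mass depends only on the endpoint \(P_\lambda\). Under \(F\), this says the law of \(\Pi|_{[N]}\) depends only on \(\lambda(\pi)\), for every \(N\). We claim this is equivalent to exchangeability. If a law is exchangeable, permutations of \([N]\) act transitively on partitions of a fixed type, so the probabilities depend only on the type. Conversely, if the probabilities of \(\Pi|_{[N]}\) depend only on block type for every \(N\), then they are invariant under finite permutations. The cylinders determine the law, so the law is exchangeable. Since central laws on \(\Gamma\) supported on \(\mathcal X_{\Gamma_{\mathrm{pf}}}\) have the same cylinder structure, this identifies the two classes of laws.

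Finally, the dimension formula \eqref{eq:path-forest-dimension-formula} is the count of set partitions of \([N]\) of type \(\lambda\), which is \(N!/(\prod_i\lambda_i!\,a_\lambda)\). As a cross-check against \eqref{eq:rooted-hook}, the hook product of \(P_\lambda\) is \((N+1)\prod_i\lambda_i!\), which gives \(d=N!/\prod_i\lambda_i!\). Dividing by \(|\Aut(P_\lambda)|=a_\lambda\) yields the same value.

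The main point needing care is the first step: verifying that the \(\Aut\)-quotient of labellings matches set partitions exactly, including the freeness of the action. Everything else is bookkeeping.
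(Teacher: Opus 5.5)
Your proposal is correct and follows essentially the same route as the paper. You group the positive labels of a quotient labelling by root branch, check that deleting the largest label corresponds to restriction, pass to the inverse limit through cylinders, and use transitivity of the symmetric group on partitions of a fixed type to match exchangeability with centrality; your explicit freeness check and the hook-formula cross-check of \(u(P_\lambda)\) are harmless additions to the paper's direct count of partitions of type \(\lambda\).
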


\begin{proof}
For a quotient labelling of \(P_\lambda\), group the positive labels by
root branch. Conversely, arrange each block in its unique increasing chain
order and attach that chain to the root labelled zero. The two maps are
inverse and commute with restriction to an initial label set. Both infinite
maps are consequently continuous.
Counting partitions of type \(\lambda\) gives
\eqref{eq:path-forest-dimension-formula}.

The symmetric group on \([N]\) is transitive on partitions of a fixed
type. Thus exchangeability is equivalent to uniformity within each type,
which under the bijection is centrality. The choice of compatible edge-copy
identification does not change the pushed-forward laws by
Lemma~\ref{lem:path-quotient-labelling-bijection}.
\end{proof}

For \(p\in\King\), denote by \(\mu^p\) the central law of
\(\psi_p\). The preceding correspondence realizes it as \(F_*Q^p\):
each finite path ending at \(P_\lambda\) has probability
\(\chi_p(\widetilde m_\lambda)=\psi_p(F_{P_\lambda})\). Hence
\begin{equation}\label{eq:paintbox-central-endpoints}
 M_n^p(P_\lambda)=u(P_\lambda)\chi_p(\widetilde m_\lambda),
 \qquad |\lambda|=n-1.
\end{equation}

\begin{theorem}[The exposed Kingman face]
\label{thm:path-forest-Bauer-face}
Let
\[
 \mathscr C_{\mathrm{pf}}
 =\{\mu\in\mathscr C(\Gamma):
      \mu(X_n\text{ is path-forest})=1\text{ for every }n\}.
\]
This is a proper exposed face, with exposing functional
\begin{equation}\label{eq:exposing-functional}
 \mathscr D(\mu_L)=\sum_{m\ge3}2^{-m}\delta_m(L).
\end{equation}
The barycentre map
\begin{equation}\label{eq:Kingman-barycentre-map}
 \mathsf B:\mathcal P(\King)\longrightarrow\mathscr C_{\mathrm{pf}},
 \qquad \mathsf B(\rho)=\int_{\King}\mu^p\,\rho(dp),
\end{equation}
is an affine homeomorphism, so the face is a Bauer simplex with extreme
points \(\mu^p\), \(p\in\King\). These laws are also extreme in
\(\mathscr C(\Gamma)\).
\end{theorem}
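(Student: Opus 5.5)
The plan is to translate everything through the affine homeomorphism \(L\mapsto\mu_L\) between left Pieri states and \(\mathscr C(\Gamma)\) from \eqref{eq:left-Pieri-central-map}. The first step identifies the face condition with vanishing on \(\Ipf\). For \(\mu=\mu_L\) and \(t\in\T_n\) we have \(\mu(X_n=t)=u(t)L(F_t)\) with \(u(t)>0\). Hence \(\mu\in\mathscr C_{\mathrm{pf}}\) if and only if \(L(F_t)=0\) for every non-path-forest \(t\), that is, \(L(\Ipf)=0\). By Lemma~\ref{lem:chain-Pieri-defects} this is equivalent to \(\delta_m(L)=0\) for all \(m\ge3\).

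The functional \(\mathscr D\) in \eqref{eq:exposing-functional} is affine in \(\mu\), because each \(\delta_m(L)\) is a finite linear combination of values \(L(F_t)=\mu(C_\gamma)\), and it converges absolutely since \(0\le\delta_m\le1\). Each \(\delta_m\) is a finite combination of cylinder masses of clopen sets, so it is weakly continuous. The uniformly convergent series is therefore a continuous affine function. It is nonnegative on \(\mathscr C(\Gamma)\) and vanishes exactly on \(\mathscr C_{\mathrm{pf}}\), so \(\mathscr C_{\mathrm{pf}}\) is the set where a continuous affine function attains its minimum. This makes it an exposed face and in particular a closed face.

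Properness follows from Proposition~\ref{prop:extremal-non-character}: \(\mu_1\) has \(\mathscr D(\mu_1)=2^{-3}>0\). The face is nonempty, since it contains for example \(\mu^p\).

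Next, Theorem~\ref{thm:two-sided-Pieri-characterization} identifies the states with \(L(\Ipf)=0\) as exactly the mixtures \(L_\rho=\int\psi_p\,\rho(dp)\), with \(\rho\) unique and \(\rho\mapsto L_\rho\) an affine homeomorphism onto the two-sided states. Composing with \(L\mapsto\mu_L\), I check that \(\mu_{L_\rho}=\mathsf B(\rho)\). Both sides are central, and on each cylinder \(C_\gamma\) ending at \(t\) they give \(\int\psi_p(F_t)\,\rho(dp)=\int\mu^p(C_\gamma)\,\rho(dp)\). The integral is well defined because \(p\mapsto\mu^p(C_\gamma)\) is continuous by Lemma~\ref{lem:Kingman-simplex-continuity}, and cylinders determine the measure.

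Hence \(\mathsf B\) is an affine bijection onto \(\mathscr C_{\mathrm{pf}}\). It is continuous for the weak topology, since cylinder masses determine weak convergence. Its domain is compact and its target Hausdorff, so it is a homeomorphism. The Bauer simplex structure and the extreme points \(\mu^p\) then transfer from \(\mathcal P(\King)\), where the extreme points are the closed set of Dirac masses, exactly as in Proposition~\ref{prop:path-forest-Pieri-state-simplex}.

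Finally, extremality in \(\mathscr C(\Gamma)\) uses the face property. Suppose \(\mu^p=a\nu+(1-a)\nu'\) with \(0<a<1\) and \(\nu,\nu'\in\mathscr C(\Gamma)\). Then \(\mathscr D(\nu)=\mathscr D(\nu')=0\) by nonnegativity, so both lie in \(\mathscr C_{\mathrm{pf}}\). Extremality of \(\mu^p\) in the face then forces \(\nu=\nu'=\mu^p\).

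The step needing the most care is continuity of \(\mathscr D\) and of the maps, together with the face argument. Both rest on the uniform bound \(\delta_m\le1\) and on the clopen-cylinder description of the weak topology, which makes \(\mathscr D\) a uniform limit of continuous affine functions.
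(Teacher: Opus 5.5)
Your proposal is correct and follows essentially the same route as the paper. You realize \(\mathscr C_{\mathrm{pf}}\) as the zero set of the nonnegative, uniformly convergent, continuous affine functional \(\mathscr D\) via Lemma~\ref{lem:chain-Pieri-defects}, obtain properness from Proposition~\ref{prop:extremal-non-character}, transfer the Bauer structure from Theorem~\ref{thm:two-sided-Pieri-characterization} through \eqref{eq:left-Pieri-central-map}, and deduce extremality in \(\mathscr C(\Gamma)\) from the face property; your explicit use of \(\mu(X_n=t)=u(t)L(F_t)\) with \(u(t)>0\) and your cylinder-wise check of \(\mu_{L_\rho}=\mathsf B(\rho)\) merely spell out steps the paper leaves implicit.
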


\begin{proof}
An exposed face is the set of minimizers of a continuous affine functional.
Each \(\delta_m(L)\) is a finite linear combination of tree
coordinates, so it is continuous and affine under the correspondence
\eqref{eq:left-Pieri-central-map}. The bounds
\(0\le\delta_m(L)\le1\) in Lemma~\ref{lem:chain-Pieri-defects}
give uniform convergence of \eqref{eq:exposing-functional}. Hence
\(\mathscr D\) is continuous, affine, and nonnegative.

Its zero set consists of the states whose chain defects all vanish.
By Theorem~\ref{thm:two-sided-Pieri-characterization}, these are the
states annihilating \(\Ipf\), which correspond to the central laws in
\(\mathscr C_{\mathrm{pf}}\). Thus \(\mathscr D\) exposes this
face. For the extreme laws of
Proposition~\ref{prop:extremal-non-character},
\(\mathscr D(\mu_h)=2^{-(h+2)}>0\), proving that the face is proper.

The same characterization identifies \(\mathscr C_{\mathrm{pf}}\) with the
two-sided Pieri states, equivalently with the traces among left Pieri states,
and yields the unique mixture representation. Since
\(p\mapsto\mu^p=F_*Q^p\) is weakly continuous, it defines a Borel probability
kernel, so \eqref{eq:Kingman-barycentre-map} is well defined. The affine
homeomorphism follows by composing the state parametrization in
Theorem~\ref{thm:two-sided-Pieri-characterization} with
\eqref{eq:left-Pieri-central-map}. Its extreme points are the images of
the point masses and form a closed set. An extreme point of a face is
extreme in the containing convex set, so each \(\mu^p\) is also extreme
in \(\mathscr C(\Gamma)\).
\end{proof}

Restriction and zero extension identify central laws on
\(\Gamma_{\mathrm{pf}}\) with \(\mathscr C_{\mathrm{pf}}\): the
inherited path counts give the same central cylinder equations. Since the
subgraph path space is closed, this identification is an affine
homeomorphism for weak convergence.

\subsection{Kingman normalization and finite-population kernels}

Let \(\mathbb Y\) be the set of all integer partitions. For
\(\mu\vdash r\) and \(\lambda\vdash N\), write
\(n_{\mathrm{pf}}(\mu;\lambda)\) for the weighted number of paths from
\(P_\mu\) to \(P_\lambda\), with the value zero when \(r>N\)
and \(n_{\mathrm{pf}}(\mu;\lambda)=\delta_{\mu\lambda}\) when
\(r=N\). We use the following normalization of the Martin kernel:
\begin{equation}\label{eq:path-forest-Martin-kernel}
 K^{\mathrm{pf}}_\mu(\lambda)
   =\frac{u(P_\mu)n_{\mathrm{pf}}(\mu;\lambda)}{u(P_\lambda)}.
\end{equation}
For \(r\le N\), these coordinates sum to one over \(\mu\vdash r\).

The classical Kingman graph \(\mathbb K\) has the same partition vertices.
If \(\lambda\) is obtained from \(\mu\) by adding a singleton part
or by increasing a part of size \(k-1\) to \(k\), its edge weight is
\(\kappa_{\mathbb K}(\mu,\lambda)=r_k(\lambda)\), with \(k=1\)
in the singleton case \cite[Section~4]{borodin2000harmonic}.
The Hoffman weights satisfy
\begin{equation}\label{eq:path-forest-Kingman-gauge}
 n(P_\mu;P_\lambda)
    =\frac{a_\mu}{a_\lambda}\kappa_{\mathbb K}(\mu,\lambda).
\end{equation}
For a new singleton, the left side is one and
\(a_\lambda/a_\mu=r_1(\lambda)\). For an extension with \(k\ge2\),
the left side is \(r_{k-1}(\mu)\) and
\[
 \frac{a_\lambda}{a_\mu}
      =\frac{r_k(\mu)+1}{r_{k-1}(\mu)},
 \qquad r_k(\lambda)=r_k(\mu)+1.
\]
This proves \eqref{eq:path-forest-Kingman-gauge}. Let
\(\kappa_{\mathbb K}(\mu;\lambda)\) be the corresponding weighted
number of paths, including the length-zero path, and set
\(\dim_{\mathbb K}(\lambda)=\kappa_{\mathbb K}(\varnothing;\lambda)\).
The ratios in \eqref{eq:path-forest-Kingman-gauge} telescope along paths, so
\[
 n_{\mathrm{pf}}(\mu;\lambda)
   =\frac{a_\mu}{a_\lambda}\kappa_{\mathbb K}(\mu;\lambda),
 \qquad
 \dim_{\mathbb K}(\lambda)=a_\lambda u(P_\lambda)
                         =\frac{N!}{\prod_i\lambda_i!}.
\]
Consequently the normalized Martin kernels agree:
\[
 K^{\mathrm{pf}}_\mu(\lambda)
 =\frac{\dim_{\mathbb K}(\mu)\kappa_{\mathbb K}(\mu;\lambda)}
        {\dim_{\mathbb K}(\lambda)}.
\]
The augmented monomial normalization and the change of graph weights thus
use the same factor \(a_\lambda\).

\begin{lemma}[A uniform finite-population estimate]
\label{lem:path-forest-partition-kernel}
Let \(\Pi_\lambda\) be uniform among the partitions of \([N]\) of
type \(\lambda\), where \(N\ge1\), and set \(q_i=\lambda_i/N\).
For \(1\le r\le N\),
\begin{equation}\label{eq:finite-population-TV}
 \left\|\mathcal L(\Pi_\lambda|_{[r]})
          -\mathcal L_{Q^q}(\Pi|_{[r]})\right\|_{\mathrm{TV}}
 \le 1-\frac{(N)_r}{N^r}
 \le\frac{\binom r2}{N},
\end{equation}
where \((N)_r=N(N-1)\cdots(N-r+1)\) is the falling factorial and
\(\|P-Q\|_{\mathrm{TV}}=\sup_A|P(A)-Q(A)|\). Moreover,
\begin{equation}\label{eq:path-forest-Martin-partition-restriction}
 K^{\mathrm{pf}}_\mu(\lambda)
   =\Prob\{\lambda(\Pi_\lambda|_{[r]})=\mu\},
 \qquad \mu\vdash r.
\end{equation}
Thus, if \(N_j\to\infty\), \(\lambda^{(j)}\vdash N_j\), and
\(\lambda_i^{(j)}/N_j\to p_i\) for every fixed \(i\), then
\(p\in\King\) and
\begin{equation}\label{eq:path-forest-kernel-paintbox-limit}
 K^{\mathrm{pf}}_\mu(\lambda^{(j)})
   \longrightarrow u(P_\mu)\chi_p(\widetilde m_\mu)
 \qquad(\mu\in\mathbb Y).
\end{equation}
\end{lemma}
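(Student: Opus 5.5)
The plan has three parts: a coupling for the total-variation bound, a block-to-chain computation for the kernel identity, and a limit argument for the convergence.

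\textbf{The TV bound.} I would realise both laws on one probability space. Fix a distinguished set partition of \([N]\) of type \(\lambda\), with blocks \(B_1,\ldots,B_\ell\). Draw \(U_1,U_2,\ldots\) independently and uniformly from \([N]\). Let \(\Pi'\) be the partition of \([r]\) generated by the block labels of \(U_1,\ldots,U_r\). Label \(i\) lies in block \(j\) with probability \(|B_j|/N=q_j\), independently across labels, and \(q\) has no dust. Hence \(\Pi'\) has law \(Q^q\) restricted to \([r]\).

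On the event that \(U_1,\ldots,U_r\) are distinct, the tuple \((U_1,\ldots,U_r)\) is a uniform injection \([r]\hookrightarrow[N]\). I would identify \(\Pi_\lambda|_{[r]}\) with the partition induced by pulling back the fixed partition through a uniform random permutation of \([N]\); restricted to \([r]\), this pulls back through a uniform injection. So I would construct \(\Pi_\lambda|_{[r]}\) from \(U\) on the distinctness event and from an independent uniform injection off it. The two restrictions then agree except on the collision event, whose probability is \(1-(N)_r/N^r\). The union bound over the \(\binom r2\) pairs gives the second inequality. This coupling is the crux; everything else is bookkeeping.

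\textbf{The kernel identity.} I would use Lemma~\ref{lem:block-chain-correspondence}. Weighted paths \(\one\to P_\lambda\) are in bijection with the partitions of \([N]\) of type \(\lambda\), compatibly with restriction. Each path to \(P_\lambda\) factors uniquely through its level-\(r\) prefix, which ends at some \(P_\mu\). The number of paths from \(\one\) through \(P_\mu\) to \(P_\lambda\) is therefore \(u(P_\mu)n_{\mathrm{pf}}(\mu;\lambda)\). Dividing by \(u(P_\lambda)\) gives the probability that a uniform partition of type \(\lambda\) restricts to type \(\mu\) on \([r]\). This is \eqref{eq:path-forest-Martin-partition-restriction}. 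Here the identification of weighted path counts with labelling counts in Lemma~\ref{lem:path-quotient-labelling-bijection} is used.

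\textbf{The limit.} The condition \(p\in\King\) follows because ordering is preserved in the limit and Fatou gives \(\sum_ip_i\le1\), since each finite partial sum of \(\lambda^{(j)}_i/N_j\) is at most \(1\). For fixed \(\mu\vdash r\), I would combine three facts:
\begin{itemize}
\item \eqref{eq:path-forest-Martin-partition-restriction} writes \(K^{\mathrm{pf}}_\mu(\lambda^{(j)})\) as a probability for \(\Pi_{\lambda^{(j)}}\);
\item \eqref{eq:finite-population-TV} replaces this law by \(Q^{q^{(j)}}\) on \([r]\), with error at most \(\binom r2/N_j\to0\);
\item the Kingman topology is coordinatewise, so \(q^{(j)}\to p\) in \(\King\).
\end{itemize}
Weak continuity of \(p\mapsto Q^p\) (from Lemma~\ref{lem:Kingman-simplex-continuity} and \eqref{eq:paintbox-fixed-partition}) then gives
\[
Q^{q^{(j)}}\{\lambda(\Pi|_{[r]})=\mu\}\to Q^p\{\lambda(\Pi|_{[r]})=\mu\}.
\]
By Lemma~\ref{lem:paintbox-augmented-monomial-probabilities}, the right side is the number of set partitions of \([r]\) of type \(\mu\) times \(\chi_p(\widetilde m_\mu)\). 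That number is \(u(P_\mu)\) by \eqref{eq:path-forest-dimension-formula}, which gives \eqref{eq:path-forest-kernel-paintbox-limit}.
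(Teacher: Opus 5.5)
Your proposal is correct and follows essentially the same route as the paper. The paper also couples sampling with and without replacement: it keeps the with-replacement tuple on the no-collision event and resamples a uniform injection otherwise. It then counts $u(P_\mu)n_{\mathrm{pf}}(\mu;\lambda)$ weighted paths through $P_\mu$ under the block-to-chain correspondence, and combines the TV estimate with continuity in $p$ to get the limit. The only omissions are cosmetic: the trivial case $\mu=\varnothing$ (both sides equal one), and the observation that $N_j\ge r$ eventually, so the TV bound applies.
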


\begin{proof}
Divide a population of \(N\) individuals into distinguished cells of
sizes \(\lambda_i\). For each partition of type \(\lambda\), there
are \(a_\lambda\prod_i\lambda_i!\) bijections from \([N]\) to the
population that induce it: equally sized blocks can be assigned to cells
in \(a_\lambda\) ways, and their labels can be assigned to individuals
in \(\prod_i\lambda_i!\) ways. A uniform bijection therefore gives
\(\Pi_\lambda\). Restricting to \([r]\) samples \(r\) individuals
without replacement and records their cells.
With replacement, the cells are independent with probabilities \(q_i\),
so they give the paintbox \(Q^q\). Conditional on having no repeated
individual, the ordered with-replacement sample is uniform among distinct
\(r\)-tuples. Keep this sample on that event and otherwise draw a fresh
uniform distinct tuple; the resulting tuple has the without-replacement law.
The two samples therefore agree with probability at least \((N)_r/N^r\).
Recording cells cannot increase the discrepancy probability. A union bound
over pairs proves the second inequality in \eqref{eq:finite-population-TV}.
This is the sampling-without-replacement comparison used in finite
exchangeability \cite{diaconis1980finite}.

For a partition \(\pi\in\mathfrak P_r\) with distinguished block
sizes \(\mu_1,\ldots,\mu_\ell\), counting ordered samples gives
\begin{equation}\label{eq:finite-population-fixed-partition}
 \Prob\{\Pi_\lambda|_{[r]}=\pi\}
 =\frac1{(N)_r}
   \sum_{\iota:[\ell]\hookrightarrow\N}\,
       \prod_{a=1}^{\ell}(\lambda_{\iota(a)})_{\mu_a}.
\end{equation}
Here \((b)_0=1\), \((b)_j=b(b-1)\cdots(b-j+1)\) for
\(1\le j\le b\), and \((b)_j=0\) for \(j>b\), with
\(b\) a nonnegative integer. The formula assigns distinct population cells
to distinct blocks and distinct individuals to their labels.

Under Lemma~\ref{lem:block-chain-correspondence}, a uniform partition of
type \(\lambda\) is a uniform weighted path to \(P_\lambda\).
There are \(u(P_\mu)n_{\mathrm{pf}}(\mu;\lambda)\) such paths through
\(P_\mu\), proving
\eqref{eq:path-forest-Martin-partition-restriction}. There are
\(u(P_\mu)\) partitions of type \(\mu\). Combining \eqref{eq:finite-population-fixed-partition} with the
dimension formula gives the following kernel for \(\mu\vdash r\) and
\(r\le N\):
\begin{equation}\label{eq:finite-population-explicit-kernel}
 K^{\mathrm{pf}}_\mu(\lambda)
 =\frac{r!}{a_\mu\prod_{a=1}^{\ell}\mu_a!}\,
   \frac{1}{(N)_r}
   \sum_{\iota:[\ell]\hookrightarrow\N}
      \prod_{a=1}^{\ell}(\lambda_{\iota(a)})_{\mu_a}.
\end{equation}
This is the factorial-monomial form of the Kingman kernel
\cite[Section~3]{petrov2009kingman}, with the dimension factor
\(u(P_\mu)\) giving probability-valued coordinates.
The same type event under \(Q^q\) has probability
\(u(P_\mu)\chi_q(\widetilde m_\mu)\). Applying
\eqref{eq:finite-population-TV} to this event gives
\begin{equation}\label{eq:finite-population-kernel-error}
 \left|K^{\mathrm{pf}}_\mu(\lambda)
       -u(P_\mu)\chi_q(\widetilde m_\mu)\right|
 \le \frac{\binom r2}{N}.
\end{equation}
The normalized partitions belong to the closed space \(\King\), so their
coordinatewise limit \(p\) also belongs to \(\King\). For each fixed
\(\mu\vdash r\), eventually \(N_j\ge r\), and the right side of
\eqref{eq:finite-population-kernel-error} tends to zero. Continuity from
Lemma~\ref{lem:Kingman-simplex-continuity} proves
\eqref{eq:path-forest-kernel-paintbox-limit}. Formula
\eqref{eq:finite-population-explicit-kernel} also holds for \(r=0\)
with the empty-product conventions; the empty-partition coordinate is one.
\end{proof}

\subsection{The full and minimal Martin boundaries}

We use the fixed-cotransition Martin compactification
\cite[Sections~2.3 and~3.3]{vershik2015equipped}. Embed \(\mathbb Y\) in
\([0,1]^{\mathbb Y}\) by
\(\lambda\mapsto(K^{\mathrm{pf}}_\mu(\lambda))_\mu\), and denote its
closure by \(\Omega_M(\Gamma_{\mathrm{pf}})\). This space is compact
metrizable because \(\mathbb Y\) is countable. We also write
\(K^{\mathrm{pf}}_\mu\) for the continuous coordinate function on
this closure. The Martin boundary
\(\partial\Omega_M(\Gamma_{\mathrm{pf}})\) is the closure minus the
finite vertices. For a boundary point \(z\), put
\(h_z(P_\mu)=K^{\mathrm{pf}}_\mu(z)/u(P_\mu)\). The minimal boundary
consists of the points for which \(h_z\) is normalized nonnegative
harmonic and spans a minimal ray.

\begin{theorem}[Kingman boundary and convergence criterion]
\label{thm:path-forest-Kingman-boundary}
The full and minimal Martin boundaries of \(\Gamma_{\mathrm{pf}}\)
coincide and are homeomorphic to \(\King\) via
\begin{equation}\label{eq:path-forest-boundary-kernel}
 K^{\mathrm{pf}}_\mu(p)=u(P_\mu)\chi_p(\widetilde m_\mu).
\end{equation}
For any sequence \(N_j\to\infty\) and
\(\lambda^{(j)}\vdash N_j\), convergence to the boundary point \(p\)
is equivalent to \(\lambda_i^{(j)}/N_j\to p_i\) for every fixed \(i\).
\end{theorem}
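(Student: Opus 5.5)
The plan is to use Lemma~\ref{lem:path-forest-partition-kernel} for the boundary points, the separation in Lemma~\ref{lem:Kingman-simplex-continuity} for injectivity, and Theorem~\ref{thm:path-forest-Bauer-face} for minimality.

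First, I will identify the full boundary with a subset of \(\King\). Let \(\lambda^{(j)}\vdash N_j\) be a sequence whose image converges in \(\Omega_M(\Gamma_{\mathrm{pf}})\) to a boundary point \(z\). Since each vertex is isolated in the closure (a vertex \(\lambda\vdash N\) is determined by its coordinates at level \(N\), where \(K^{\mathrm{pf}}_\mu(\lambda)=\delta_{\mu\lambda}\)), boundary points arise only from sequences with \(N_j\to\infty\). The normalized frequency vectors \(q^{(j)}=\lambda^{(j)}/N_j\) lie in the compact space \(\King\), so along a subsequence \(q^{(j)}\to p\). Then \eqref{eq:path-forest-kernel-paintbox-limit} gives
\[
K^{\mathrm{pf}}_\mu(z)=u(P_\mu)\chi_p(\widetilde m_\mu)\qquad\text{for every }\mu .
\]
Every point of \(\King\) is attained. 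For the pure-dust parameter take \(\lambda=(1^N)\). In general, take \(\lambda_i=\lfloor p_iN\rfloor\) for \(i\le L_N\), with \(L_N\to\infty\) slowly, and fill the remainder with singletons. Then \(\lambda_i/N\to p_i\) for each fixed \(i\), because the singleton parts contribute \(1/N\to0\) to every coordinate.

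Next I need injectivity and continuity of the map \(p\mapsto (u(P_\mu)\chi_p(\widetilde m_\mu))_\mu\). Continuity follows from Lemma~\ref{lem:Kingman-simplex-continuity}. For injectivity, suppose \(\chi_p(\widetilde m_\mu)=\chi_q(\widetilde m_\mu)\) for all \(\mu\). The augmented monomials span \(\Lambda\), so \(\chi_p=\chi_q\), hence \(S_k(p)=S_k(q)\) for all \(k\ge2\), and separation gives \(p=q\). A continuous injection from compact \(\King\) into the Hausdorff space \([0,1]^{\mathbb Y}\) is a homeomorphism onto its image, and that image is the boundary. This also gives the uniqueness of subsequential limits.

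The convergence criterion then follows from a compactness argument. If \(\lambda^{(j)}/N_j\to p\) coordinatewise, then \eqref{eq:path-forest-kernel-paintbox-limit} gives convergence to \(p\). Conversely, suppose the kernels converge to \(p\). Every subsequential limit \(p'\) of \(q^{(j)}\) in the compact space \(\King\) produces the boundary point \(p'\) by the forward direction. Injectivity forces \(p'=p\), so \(q^{(j)}\to p\).

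For minimality, the function \(h_p(P_\mu)=\chi_p(\widetilde m_\mu)=\psi_p(F_{P_\mu})\) is the harmonic function of the central law \(\mu^p\), which lies in \(\mathscr C_{\mathrm{pf}}\) under restriction. Theorem~\ref{thm:path-forest-Bauer-face} says \(\mu^p\) is extreme in \(\mathscr C_{\mathrm{pf}}\), and this face is identified with the central laws on \(\Gamma_{\mathrm{pf}}\). Proposition~\ref{prop:central-tail-extreme}, applied to the subgraph \(\Gamma_{\mathrm{pf}}\), then shows that \(h_p\) spans a minimal ray, so every boundary point is minimal.

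The main obstacle I expect is the bookkeeping, not the analysis. The step needing most care is transferring Proposition~\ref{prop:central-tail-extreme} to \(\Gamma_{\mathrm{pf}}\), or more simply arguing minimality of the ray directly from extremality in the Bauer face via the affine correspondence with harmonic functions. The other point to check is the isolation of the finite vertices, so that the boundary is exactly the set of limits with \(N_j\to\infty\).
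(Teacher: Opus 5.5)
Your outline follows the paper's proof. You use compactness of \(\King\) with Lemma~\ref{lem:path-forest-partition-kernel} to identify limits, an explicit approximating partition sequence for each \(p\), separation by power sums for injectivity, and the compact-to-Hausdorff argument for the homeomorphism. Minimality comes from Theorem~\ref{thm:path-forest-Bauer-face} with Proposition~\ref{prop:central-tail-extreme} on \(\Gamma_{\mathrm{pf}}\). Your converse of the convergence criterion (subsequential limits plus injectivity) is the paper's argument through the one-part kernels \(K^{\mathrm{pf}}_{(k)}\), phrased via injectivity instead.

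The step that fails as written is your justification of isolation. A vertex \(\lambda\vdash N\) is not determined among all vertices by its level-\(N\) coordinates. For \(N'>N\), the vertex \((1^{N'})\) has \(K^{\mathrm{pf}}_\mu((1^{N'}))=\delta_{\mu,(1^N)}\) for every \(\mu\vdash N\), exactly as \((1^N)\) does.

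You do not need isolation to see that boundary points come from sequences with \(N_j\to\infty\): a bounded-level subsequence has a constant sub-subsequence, which would make \(z\) a finite vertex. You do need a replacement when you claim that every \(p\in\King\) is attained. The limit of your approximating sequence is a priori only a point of the closure, and you must rule out that it is a finite vertex.

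The fix, as in the paper, is to use coordinates that detect the level. For \(r\le N\), the level-\(r\) coordinates of any \(\lambda\vdash N\) sum to one. Hence the limit point, with coordinates \(u(P_\mu)\chi_p(\widetilde m_\mu)\), has level-\(r\) coordinates summing to one for every \(r\); equivalently \(\sum_{\mu\vdash r}u(P_\mu)\chi_p(\widetilde m_\mu)=\chi_p(\mathsf p_1^r)=1\). A vertex \(\lambda\vdash N\), by contrast, has all level-\((N+1)\) coordinates equal to zero. The open set \(\{K^{\mathrm{pf}}_\lambda>\tfrac12,\ \sum_{\nu\vdash N+1}K^{\mathrm{pf}}_\nu<\tfrac12\}\) built from this observation gives the isolation you asserted, and also injectivity of the embedding across levels. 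With this repair your argument is complete.
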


\begin{proof}
First, finite vertices are isolated in the compactification. For
\(\lambda\vdash N\), the conditions
\[
 K^{\mathrm{pf}}_\lambda(z)>\tfrac12,
 \qquad \sum_{\nu\vdash N+1}K^{\mathrm{pf}}_\nu(z)<\tfrac12
\]
define an open set \(U_\lambda\) containing its embedded point and no
other finite vertex: the first condition excludes lower levels and other
vertices at level \(N\), and the second excludes all higher levels.
The set \(U_\lambda\setminus\{\lambda\}\) is open and disjoint from
the dense set of finite vertices, so it is empty. The embedding is also
injective by the same coordinate identities.

By metrizability, every boundary point is a limit of finite vertices.
Their levels tend to infinity, since each bounded set of levels consists
of finitely many isolated vertices. Write these vertices as
\(\lambda^{(j)}\vdash N_j\), with \(N_j\to\infty\). By compactness of
\(\King\), some subsequence of
\((\lambda_i^{(j)}/N_j)_i\) converges coordinatewise to a parameter
\(p\). Lemma~\ref{lem:path-forest-partition-kernel} identifies the kernel
limit along this subsequence with \eqref{eq:path-forest-boundary-kernel}.
Thus every boundary point has the stated form.

Conversely, fix \(p\in\King\). Let \(b_N=\lfloor\sqrt N\rfloor\),
take parts \(\lfloor Np_i\rfloor\) for \(1\le i\le b_N\), discard
zeros, and add
\(N-\sum_{i\le b_N}\lfloor Np_i\rfloor\) singleton parts. Their ranked
union is a partition \(\lambda^{(N)}\) of \(N\). For every fixed
\(i\), once \(b_N\ge i\),
\[
 \frac{\lfloor Np_i\rfloor}{N}
 \le\frac{\lambda_i^{(N)}}N
 \le\max\left\{\frac{\lfloor Np_i\rfloor}{N},\frac1N\right\}.
\]
Hence its normalized coordinates converge to \(p\), and its kernels
converge to \eqref{eq:path-forest-boundary-kernel}. For every fixed partition level, 
the limiting kernel coordinates sum to one.
No finite vertex has this property at all levels, so the limit belongs to the
Martin boundary.

For a one-part partition \((k)\), the chain \(P_{(k)}=C_{k+1}\)
has \(u(P_{(k)})=1\), and \(\widetilde m_{(k)}=\mathsf p_k\).
Thus its boundary coordinate is \(S_k(p)\) for \(k\ge2\).
These functions separate points. Continuity of all coordinates and
compactness give the homeomorphism.

For \(p\in\King\), \eqref{eq:path-forest-boundary-kernel} identifies
the boundary function as \(h_p(P_\mu)=\psi_p(F_{P_\mu})\).
The left Pieri identity and vanishing off the subgraph make \(h_p\)
normalized nonnegative harmonic on \(\Gamma_{\mathrm{pf}}\), with
central law the restriction of \(\mu^p\). This law is extreme by
Theorem~\ref{thm:path-forest-Bauer-face} and the restriction
correspondence above. Proposition~\ref{prop:central-tail-extreme},
applied to \(\Gamma_{\mathrm{pf}}\), therefore shows that \(h_p\)
spans a minimal ray.

Coordinatewise convergence of the normalized partitions implies kernel
convergence by \eqref{eq:finite-population-kernel-error}. Conversely, suppose
the kernels converge to those of \(p\), and set
\(q^{(j)}=(\lambda_i^{(j)}/N_j)_{i\ge1}\).
For fixed \(k\ge2\) and all large \(j\),
\eqref{eq:finite-population-explicit-kernel} and
\eqref{eq:finite-population-kernel-error} give
\begin{equation}\label{eq:one-part-kernel-moment-control}
 \begin{aligned}
 K^{\mathrm{pf}}_{(k)}(\lambda^{(j)})
   &=\frac{\sum_i(\lambda_i^{(j)})_k}{(N_j)_k},\\
 |S_k(q^{(j)})-S_k(p)|
   &\le \frac{\binom k2}{N_j}
       +|K^{\mathrm{pf}}_{(k)}(\lambda^{(j)})-S_k(p)|
       \longrightarrow0.
 \end{aligned}
\end{equation}
Every subsequential limit of \(q^{(j)}\) in the compact space
\(\King\) therefore has the same power sums as \(p\), by continuity
and \eqref{eq:one-part-kernel-moment-control}.
Lemma~\ref{lem:Kingman-simplex-continuity} identifies that limit
with \(p\). Compactness gives convergence of the whole sequence.
\end{proof}

In the recursive-paintbox boundary of \cite{zhang2026recursivepaintboxesmartinboundary}, the
point \(p\) is represented by the root mass partition \(p\), with dust
mass \(p_0\), and a single atom of mass one, with no dust, at every
descendant split. Each occupied positive root block then grows as a chain,
while root dust produces singleton branches. Its finite sampling laws are
\eqref{eq:paintbox-central-endpoints}. Since these laws determine the full
boundary point and depend continuously on \(p\), this construction
identifies the Kingman boundary above with a closed subspace of the full
Hoffman boundary.

\begin{corollary}[The mass-weighted profile]
\label{cor:mass-profile-continuity}
The mass-weighted profile \(p\mapsto\nu_p\) defined in
Subsection~\ref{subsec:paintbox-coordinates} is continuous for weak
convergence on \([0,1]\).
In particular, under the convergence criterion above,
\[
 \sum_i\frac{\lambda_i^{(j)}}{N_j}\,
       \delta_{\lambda_i^{(j)}/N_j}\Longrightarrow\nu_p.
\]
\end{corollary}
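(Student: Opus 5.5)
To prove continuity of \(p\mapsto\nu_p\) for the weak topology on \(\mathcal P([0,1])\), I will use moments. Since \([0,1]\) is compact, polynomials are uniformly dense in \(C([0,1])\) by Weierstrass. Weak convergence of probability measures on \([0,1]\) is therefore equivalent to convergence of all moments \(\int x^r\,d\nu\), \(r\ge0\). The zeroth moment is identically one. For \(r\ge1\), the identity recorded in Subsection~\ref{subsec:paintbox-coordinates} gives \(\int x^r\,\nu_p(dx)=S_{r+1}(p)\).

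By Lemma~\ref{lem:Kingman-simplex-continuity}, each \(S_k\) with \(k\ge2\) is continuous on \(\King\). Hence every moment of \(\nu_p\) depends continuously on \(p\). Since \(\King\) is metrizable, it suffices to check sequences. If \(p^{(j)}\to p\), then for each \(f\in C([0,1])\) and \(\varepsilon>0\), I choose a polynomial \(g\) with \(\|f-g\|_\infty<\varepsilon\). Then \(|\int f\,d\nu_{p^{(j)}}-\int f\,d\nu_p|\le 2\varepsilon+|\int g\,d\nu_{p^{(j)}}-\int g\,d\nu_p|\), and the last term tends to zero. This gives continuity.

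For the displayed consequence, I take \(q^{(j)}=(\lambda_i^{(j)}/N_j)_i\). This is a point of \(\King\) with zero dust, since its parts sum to one. Its profile is exactly \(\nu_{q^{(j)}}=\sum_i q^{(j)}_i\delta_{q^{(j)}_i}\). By Theorem~\ref{thm:path-forest-Kingman-boundary}, convergence of \(\lambda^{(j)}\) to the boundary point \(p\) is equivalent to \(q^{(j)}_i\to p_i\) for every fixed \(i\), which is convergence \(q^{(j)}\to p\) in the product topology of \(\King\). Continuity then gives \(\nu_{q^{(j)}}\Rightarrow\nu_p\).

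The only step needing care is the identification \(\nu_{q^{(j)}}=\sum_i q_i^{(j)}\delta_{q_i^{(j)}}\) when the dust mass vanishes. With \(p_0=0\), the \(\delta_0\) term drops out. Repeated equal parts contribute additively, consistently with the definition of \(\nu_p\) as a sum over the ranked sequence. Once this is checked, nothing else is delicate: the atom masses drifting into the dust are absorbed automatically, because the power sums \(S_k\), \(k\ge2\), are continuous even though \(\sum_i p_i\) is not.
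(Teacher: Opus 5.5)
Your proposal is correct and follows the paper's proof. Both arguments rest on the same points: all the measures $\nu_p$ have mass one, their moments $S_{r+1}(p)$ are continuous by Lemma~\ref{lem:Kingman-simplex-continuity}, and polynomial approximation on $[0,1]$ turns this into weak continuity. Your treatment of the displayed consequence is also the intended one: apply continuity at the zero-dust points $q^{(j)}=(\lambda_i^{(j)}/N_j)_i$, using the convergence criterion of Theorem~\ref{thm:path-forest-Kingman-boundary}.
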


\begin{proof}
All these measures have total mass one, and their moments of order
\(r\ge1\) are \(S_{r+1}(p)\). Continuity of these moments and polynomial
approximation on \([0,1]\) prove the assertion.
\end{proof}

\section{Root frequencies and the central tail}\label{sec:central-tail}

Under a central law in the Kingman face, the boundary parameter can be read
from the asymptotic sizes of the root branches. We show that this parameter
generates the central tail and determines the conditional path law.

For a path-forest tree \(t=P_\lambda\), write \(\lambda(t):=\lambda\) for
its root-branch partition, with parts listed in nonincreasing order. Thus, for
\(P_\lambda\in\T_n\), one has \(|\lambda|=n-1\). Define
\begin{equation}\label{eq:empirical-root-frequencies}
    \mathbf P_n(P_\lambda)
    :=\left(\frac{\lambda_1}{n-1},
             \frac{\lambda_2}{n-1},\ldots\right)
    \in\King,
    \qquad n\ge2,
\end{equation}
with zero padding. Define also the singleton-branch density and the
mass-weighted empirical branch measure:
\begin{equation}\label{eq:empirical-dust-and-mass-measure}
    D_n(P_\lambda):=\frac{r_1(\lambda)}{n-1},
    \qquad
    \nu_n(P_\lambda)
    :=\sum_{j\ge1}\frac{\lambda_j}{n-1}
       \delta_{\lambda_j/(n-1)}.
\end{equation}
The measure \(\nu_n(P_\lambda)\) is a probability measure on \([0,1]\).
Set \(\mathbf P_1=\mathbf 0=(0,0,\ldots)\), \(D_1=0\), and
\(\nu_1=\delta_0\). On non-path-forest states, use the same values,
so all three observables are defined on every Hoffman state.

The block-to-chain correspondence turns root-branch sizes into occupancy
counts. For the dust-free frequency limit in the classical occupancy scheme,
see \cite[Proposition~26]{gnedin2007occupancy}.

\begin{lemma}[Empirical branch frequencies, dust, and mass profile]
\label{lem:deterministic-paintbox-frequency-limit}
For every \(p\in\King\), under the central measure \(\mu^p\), almost surely,
\[
    \mathbf P_n(X_n)\longrightarrow p
    \quad\text{coordinatewise},
    \qquad
    D_n(X_n)\longrightarrow p_0,
    \qquad
    \nu_n(X_n)\Longrightarrow\nu_p.
\]
\end{lemma}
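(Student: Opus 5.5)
Under \(\mu^p\), Lemma~\ref{lem:block-chain-correspondence} lets us realise the path as \(F(\Pi)\), where \(\Pi\) has law \(Q^p\). Then \(X_n=P_{\lambda(\Pi|_{[n-1]})}\), so the root-branch sizes of \(X_n\) are the block sizes of the paintbox restricted to the first \(n-1\) labels. We build \(\Pi\) from i.i.d.\ colours \(c_1,c_2,\ldots\), where \(\Prob(c=i)=p_i\) for \(i\ge1\) and \(\Prob(c=0)=p_0\) (dust). Put \(N=n-1\), let \(K_{i,N}=\#\{a\le N:c_a=i\}\), and let \(Z_N\) be the number of dust labels among the first \(N\). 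By the strong law of large numbers, almost surely \(K_{i,N}/N\to p_i\) for every \(i\ge1\) simultaneously, and \(Z_N/N\to p_0\). The block multiset of \(\Pi|_{[N]}\) consists of the nonzero \(K_{i,N}\) together with \(Z_N\) singletons.

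We first prove that \(\nu_n(X_n)\Rightarrow\nu_p\). Following Corollary~\ref{cor:mass-profile-continuity}, it suffices to show that the moments converge: \(\sum_j(\lambda_j/N)^{k}\to S_k(p)\) for each \(k\ge2\). Write this sum as \(\sum_i(K_{i,N}/N)^k+Z_N/N^k\). The second term is at most \(N^{1-k}\to0\). For the atom part, fix \(L\). The first \(L\) terms converge to \(\sum_{i\le L}p_i^k\). For the tail, use \(x^k\le x^2\) and split at a level \(\varepsilon\): \(\sum_{i>L}(K_{i,N}/N)^k\le \max_{i>L}(K_{i,N}/N)\cdot\sum_{i>L}K_{i,N}/N\). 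Here \(\sum_{i>L}K_{i,N}/N\to\sum_{i>L}p_i\) almost surely, by the strong law applied to the single event \(\{c>L\}\). So the tail is at most \(\sum_{i>L}p_i+o(1)\). This goes to zero as \(L\to\infty\) unless \(p\) has infinite support, but the bound is only needed for \(k\ge2\). A sharper route is to note that \(\sum_{i>L}(K_{i,N}/N)^2\) is the empirical probability that two fixed-like draws share a colour beyond \(L\). Its expectation is about \(\sum_{i>L}p_i^2+1/N\), and a.s.\ control follows by a U-statistic strong law: \(\sum_i K_{i,N}(K_{i,N}-1)/(N(N-1))\) restricted to colours \(>L\) is a U-statistic with kernel \(\mathbf 1\{c_a=c_b>L\}\), and Hoeffding's SLLN for U-statistics gives a.s.\ convergence to \(\sum_{i>L}p_i^2\). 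We intend to use this U-statistic argument for every \(k\) (kernel \(\mathbf 1\{c_{a_1}=\cdots=c_{a_k}\ge1\}\)). It gives directly \(\sum_i (K_{i,N})_k/(N)_k\to S_k(p)\) almost surely, and the difference from \(\sum_i(K_{i,N}/N)^k\) is \(O(1/N)\) because \(\sum_iK_{i,N}\le N\). Convergence of all moments on \([0,1]\) then gives weak convergence.

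Next we derive \(D_n\to p_0\) and \(\mathbf P_n\to p\) from the previous step together with the colour counts. For the dust density, \(r_1(\lambda)=Z_N+\#\{i:K_{i,N}=1\}\). It remains to show that \(\#\{i\ge1:K_{i,N}=1\}/N\to0\). This number is bounded by \(L+\sum_{i>L}K_{i,N}\), so dividing by \(N\) gives \(\limsup\le\sum_{i>L}p_i\), which tends to \(0\) as \(L\to\infty\). For the ranked frequencies we use the argument of Theorem~\ref{thm:path-forest-Kingman-boundary}. The sequence \(\mathbf P_n(X_n)\) lies in the compact space \(\King\), and the block sizes of \(X_n\) normalised by \(N\) have power sums converging a.s.\ to \(S_k(p)\) for all \(k\ge2\). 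By continuity of the \(S_k\) (Lemma~\ref{lem:Kingman-simplex-continuity}), every subsequential limit has the power sums of \(p\), so it equals \(p\) by the separation in that lemma. Hence the whole sequence converges coordinatewise.

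The main obstacle is the almost-sure convergence of the power sums when \(p\) has infinitely many atoms. Here the truncation error must be controlled uniformly in \(N\), and the a.s.\ statement must hold simultaneously for all \(k\). The U-statistic strong law handles each \(k\) separately, and countably many null sets are then discarded. If one wants to avoid U-statistics, an alternative is the reverse martingale in \(N\) of \(\sum_i(K_{i,N})_k/(N)_k\). By exchangeability this is the conditional expectation of \(\mathbf 1\{c_1=\cdots=c_k\ge1\}\) given the exchangeable \(\sigma\)-field of the first \(N\) labels, so it converges a.s. Its limit is \(S_k(p)\) by the Hewitt--Savage zero--one law.
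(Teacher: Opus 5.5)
Your proof is correct, but it runs in the opposite order from the paper's. The paper proves the coordinatewise limit of \(\mathbf P_n(X_n)\) directly, using only the strong law for each atom, for the tail sets \(\{c>L\}\), and for dust. It compares the decreasing rearrangement of the first \(L\) normalized counts with \((p_1,\ldots,p_L)\) by monotonicity of order statistics, and then sandwiches the \(k\)-th ranked frequency as \(b_k^{(n)}\le a_k^{(n)}\le\max\{b_k^{(n)},2\varepsilon\}\). It obtains the profile limit afterwards from \(\nu_n(X_n)=\nu_{\mathbf P_n(X_n)}\) and Corollary~\ref{cor:mass-profile-continuity}. You instead prove convergence of the power sums first, which gives the profile limit at once. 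You then recover the ranked frequencies by the compactness-and-separation argument of Lemma~\ref{lem:Kingman-simplex-continuity}, as in the converse part of Theorem~\ref{thm:path-forest-Kingman-boundary}. This avoids the order-statistic bookkeeping.

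Your reverse-martingale variant also fits the paper's later structure. For \(k\ge2\), the quantity \(\sum_i(K_{i,N})_k/(N)_k\) is exactly the one-part kernel coordinate \(K^{\mathrm{pf}}_{(k)}(\lambda(X_n))\). That is the conditional probability of the chain prefix that appears in Step~3 of Theorem~\ref{thm:path-forest-tail}.

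The cost of your route is heavier machinery than the statement needs. The U-statistic strong law and Hewitt--Savage are unnecessary, because your remark that the tail bound \(\sum_{i>L}p_i\) ``goes to zero as \(L\to\infty\) unless \(p\) has infinite support'' is mistaken: \(\sum_{i>L}p_i\to0\) for every \(p\in\King\). The bound \(\sum_{i>L}(K_{i,N}/N)^k\le\sum_{i>L}K_{i,N}/N\to\sum_{i>L}p_i\) almost surely already gives \(\limsup_N\bigl|\sum_i(K_{i,N}/N)^k-S_k(p)\bigr|\le2\sum_{i>L}p_i\) for every \(L\). That is all your power-sum step needs, and it uses only the same countably many strong laws as the paper.
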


\begin{proof}
Use the block-to-chain representation of \(\mu^p\). Let \(N_i(n)\)
count labels among \(1,\ldots,n-1\) assigned to positive atom \(i\),
and let \(Z_0(n)\) count dust labels. Apply the strong law to the
indicators of individual atoms, the sets of atoms with index greater than
\(L\), and dust. On a common probability-one event, for every \(i,L\ge1\),
\[
 \frac{N_i(n)}{n-1}\longrightarrow p_i,\qquad
 \frac{\sum_{i>L}N_i(n)}{n-1}\longrightarrow\sum_{i>L}p_i,\qquad
 \frac{Z_0(n)}{n-1}\longrightarrow p_0.
\]
Work on this event. Fix \(k\ge1\) and \(\varepsilon>0\), and choose
\(L\ge k\) with \(\sum_{i>L}p_i<\varepsilon\). Eventually every
tail-atom block has normalized size at most \(2\varepsilon\), since
this bounds their total mass, and each dust block has normalized size
\(1/(n-1)<\varepsilon\). Let \((b_1^{(n)},\ldots,b_L^{(n)})\) be the decreasing rearrangement
of the first \(L\) normalized counts. Monotonicity of order statistics
under coordinatewise comparison gives
\[
 \max_{1\le j\le L}|b_j^{(n)}-p_j|
 \le \max_{1\le i\le L}\left|\frac{N_i(n)}{n-1}-p_i\right|
 \longrightarrow0.
\]
Let \(a_k^{(n)}=[\mathbf P_n(X_n)]_k\). Including all tail-atom
and dust blocks gives
\[
 b_k^{(n)}\le a_k^{(n)}\le\max\{b_k^{(n)},2\varepsilon\}.
\]
Let \(n\to\infty\) and then \(\varepsilon\downarrow0\). This proves
coordinatewise convergence to \(p\) on the common event.

The number of singleton blocks contributed by positive atoms is
\(R_n=\sum_{i\ge1}\mathbf1_{\{N_i(n)=1\}}\), and
\(r_1(\lambda(X_n))=Z_0(n)+R_n\). For every \(L\),
\[
 0\le\frac{R_n}{n-1}
 \le\frac{L}{n-1}+\frac{\sum_{i>L}N_i(n)}{n-1},
 \qquad
 \limsup_{n\to\infty}\frac{R_n}{n-1}\le\sum_{i>L}p_i.
\]
Sending \(L\to\infty\) gives \(R_n/(n-1)\to0\), and hence
\(D_n(X_n)\to p_0\). Finally,
\(\nu_n(X_n)=\nu_{\mathbf P_n(X_n)}\) for \(n\ge2\), because
these empirical frequency vectors have total mass one.
Corollary~\ref{cor:mass-profile-continuity} proves the profile limit.
\end{proof}

\begin{theorem}[The paintbox variable and the central tail]
\label{thm:path-forest-tail}
There is a \(\Tcen^0\)-measurable map
\(\mathbf P^0:\mathcal X_\Gamma\to\King\) with the following
properties under every \(\mu\in\mathscr C_{\mathrm{pf}}\).
Write \(\mathbf P_\mu=(\mathbf P_{\mu,i})_{i\ge1}\) for this map
viewed under \(\mu\), and put
\(\mathbf P_{\mu,0}=1-\sum_{i\ge1}\mathbf P_{\mu,i}\).
Almost surely,
\begin{equation}\label{eq:tail-empirical-limits}
 \mathbf P_n(X_n)\longrightarrow\mathbf P_\mu,
 \qquad D_n(X_n)\longrightarrow\mathbf P_{\mu,0},
 \qquad \nu_n(X_n)\Longrightarrow\nu_{\mathbf P_\mu},
\end{equation}
where the first convergence is coordinatewise.
The endpoints \(X_n\) also converge almost surely to \(\mathbf P_\mu\)
in the path-forest Martin compactification.

The completed central tail and conditional path law satisfy
\begin{equation}\label{eq:conditional-tail-law}
 \Tcen=\overline{\sigma(\mathbf P_\mu)}^{\,\mu},
 \qquad
 \Prob_\mu(A\mid\mathbf P_\mu)=\mu^{\mathbf P_\mu}(A)
 \quad\mu\text{-a.s.},
\end{equation}
for every Borel path event \(A\).
With \(\Theta_\mu=\mathcal L_\mu(\mathbf P_\mu)\),
the representation
\(\mu=\int_{\King}\mu^p\,\Theta_\mu(dp)\)
is the unique decomposition of \(\mu\) into extreme points of
\(\mathscr C(\Gamma)\).
In particular, \(\mu\) is extreme if and only if
\(\mathbf P_\mu\) is almost surely constant.
\end{theorem}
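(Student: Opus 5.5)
We first define the tail variable explicitly. On \(\mathcal X_\Gamma\), set \(\mathbf P^0=\lim_{n}\mathbf P_n(X_n)\) on the Borel set where this coordinatewise limit exists and lies in \(\King\), and \(\mathbf P^0=\mathbf 0\) elsewhere. The set where a sequence of coordinates converges is Borel. For each \(N\), the limit depends only on \((X_n)_{n\ge N}\), which is \(\mathscr G_N^0\)-measurable. Hence \(\mathbf P^0\) is \(\Tcen^0\)-measurable, and it does not depend on \(\mu\). Now fix \(\mu\in\mathscr C_{\mathrm{pf}}\). Theorem~\ref{thm:path-forest-Bauer-face} gives \(\mu=\int\mu^p\,\rho(dp)\) with \(\rho=\mathsf B^{-1}(\mu)\), and the map \(p\mapsto\mu^p\) is a Borel kernel. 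Lemma~\ref{lem:deterministic-paintbox-frequency-limit} shows that under each \(\mu^p\), almost surely, \(\mathbf P_n(X_n)\to p\), \(D_n(X_n)\to p_0\), and \(\nu_n(X_n)\Rightarrow\nu_p\). Integrating against \(\rho\) transfers all three convergences to \(\mu\). Since \(\mathbf P^0=p\) holds \(\mu^p\)-a.s., this yields \eqref{eq:tail-empirical-limits} with \(\mathbf P_{\mu,0}\) equal to the dust mass of the limit. Martin convergence then follows from the convergence criterion in Theorem~\ref{thm:path-forest-Kingman-boundary}: writing \(X_n=P_{\lambda}\) with \(\lambda\vdash n-1\), the normalized parts converge to \(\mathbf P_\mu\). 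The index shift \(n-1\) versus \(N\) is harmless.

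Next we derive the conditional law. Because \(\mathbf P^0=p\) holds \(\mu^p\)-a.s., for Borel \(A\subseteq\mathcal X_\Gamma\) and \(G\subseteq\King\) we have
\[
\mu(A\cap\{\mathbf P_\mu\in G\})=\int_G\mu^p(A)\,\rho(dp).
\]
Taking \(A\) to be the whole space shows \(\Theta_\mu=\rho\). The identity then says exactly that \(\Prob_\mu(A\mid\mathbf P_\mu)=\mu^{\mathbf P_\mu}(A)\) almost surely. Uniqueness of the extreme decomposition follows from the Bauer face property, since \(\mathsf B\) is injective. Extremality in \(\mathscr C(\Gamma)\) comes from two facts: \(\mathscr C_{\mathrm{pf}}\) is a face, and each \(\mu^p\) is extreme in \(\mathscr C(\Gamma)\). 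Any representation of \(\mu\) by a measure on the extreme points of \(\mathscr C(\Gamma)\) must be carried by \(\mathscr C_{\mathrm{pf}}\), because \(\mu\) gives full mass to path-forest paths. Such a representation therefore reduces to a measure on \(\{\mu^p\}\), and injectivity of \(\mathsf B\) identifies it with \(\rho\). Finally, \(\mu\) is extreme if and only if \(\rho\) is a point mass, which is the statement that \(\mathbf P_\mu\) is almost surely constant.

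The main step is the tail identification \(\Tcen=\overline{\sigma(\mathbf P_\mu)}^{\,\mu}\). The inclusion \(\supseteq\) holds by measurability of \(\mathbf P^0\). For \(\subseteq\), take a raw-tail event \(B\). Proposition~\ref{prop:central-tail-extreme} shows that each \(\mu^p\) has trivial tail, so \(\mu^p(B)\in\{0,1\}\) for \(\rho\)-a.e.\ \(p\). Set \(G=\{p:\mu^p(B)=1\}\), which is Borel because \(\mu^p(B)\) is measurable in \(p\). Then
\[
\mu\bigl(B\,\triangle\,\{\mathbf P_\mu\in G\}\bigr)=\int\mu^p\bigl(B\,\triangle\,\{\mathbf P^0\in G\}\bigr)\,\rho(dp)=0,
\]
because under \(\mu^p\) the event \(\{\mathbf P^0\in G\}\) has probability \(\mathbf 1_G(p)\), which equals \(\mu^p(B)\) almost surely in \(\rho\). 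So every tail event agrees with a \(\sigma(\mathbf P_\mu)\)-event up to a \(\mu\)-null set. Completion then gives equality, since null sets belong to both completed fields. The delicate points are the measurability of \(p\mapsto\mu^p(B)\), which we handle by a monotone-class argument starting from cylinders using the weak continuity of \(p\mapsto\mu^p\), and the check that \(\mathbf P^0\) is defined pathwise, independently of \(\mu\).
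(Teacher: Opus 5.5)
Your proposal is correct. For the construction of \(\mathbf P^0\), the almost-sure limits and the conditional law, it follows the paper's Steps 1--2. The paper checks that the convergence events are jointly Borel in \((p,\omega)\). You instead take the limits toward \(\mathbf P^0(\omega)\), which gives Borel events in \(\omega\) alone with \(\mu^p\)-probability one for every \(p\); this does the same job and is slightly cheaper.

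The genuine difference is the tail identification \(\Tcen=\overline{\sigma(\mathbf P_\mu)}^{\,\mu}\).

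\emph{The paper's route.} It counts finite bridges to get \(\E_\mu[\mathbf1_{C_\gamma}\mid\mathscr G_N^0]=K^{\mathrm{pf}}_\kappa(\lambda(X_N))/u(P_\kappa)\). Martin convergence of the endpoints and reverse-martingale convergence then give \(\E_\mu[\mathbf1_A\mid\Tcen^0]=\mu^{\mathbf P_\mu}(A)\) for every Borel \(A\). It concludes that each raw-tail event equals \(\{\mu^{\mathbf P_\mu}(A)>1/2\}\) up to a null set.

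\emph{Your route.} You use the abstract ergodic-decomposition argument. Each \(\mu^p\) is extreme in \(\mathscr C(\Gamma)\) by Theorem~\ref{thm:path-forest-Bauer-face}, so it is tail-trivial by Proposition~\ref{prop:central-tail-extreme}; indeed \(\mu^p(B)\in\{0,1\}\) for every \(p\), not just \(\rho\)-a.e. Because the tail-measurable map \(\mathbf P^0\) separates these components, \(B\) agrees with \(\{\mathbf P_\mu\in G\}\), where \(G=\{p:\mu^p(B)=1\}\), up to a \(\mu\)-null set.

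\emph{What each buys.} Your argument is shorter and does not need the Martin convergence or the explicit kernels at this stage. It does rely on the previously established extremality of \(\mu^p\) in the full simplex and on the general Dynkin criterion. The paper's argument does not use those inputs. It yields tail triviality of each \(\mu^p\) as a by-product and gives the conditional law directly with respect to the raw tail. It also shows concretely how finite-level Martin kernels approximate tail conditional probabilities.

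For uniqueness of the extreme decomposition, you use \(\mu(X_n\text{ not path-forest})=0\) together with the barycentre identity for these clopen-cylinder events, rather than the exposing functional \(\mathscr D\). This is the same argument, run with a countable family of continuous affine functionals instead of their weighted sum.
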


\begin{proof}
\emph{Step 1: Construction of a common tail map.}
For each path, set
\[
 a_i=\limsup_{n\to\infty}[\mathbf P_n(X_n)]_i,\qquad
 E=\bigcap_{i\ge1}
   \left\{\liminf_{n\to\infty}[\mathbf P_n(X_n)]_i=a_i\right\}.
\]
Since the coordinates lie in \([0,1]\), \(E\) is the event of
coordinatewise convergence.
For every \(N\), all terms with \(n\ge N\) are
\(\mathscr G_N^0\)-measurable. Since removing finitely many terms changes
neither convergence nor the limsup, \(a_i\) and \(E\) are measurable
with respect to \(\Tcen^0\). On \(E\), the vector \(\mathbf a=(a_i)_i\)
belongs to the closed space \(\King\). Define
\begin{equation}\label{eq:common-path-forest-tail-map}
 \mathbf P^0=
 \begin{cases}
 \mathbf a,&\text{on }E,\\
 (0,0,\ldots),&\text{on }E^c.
 \end{cases}
\end{equation}
This \(\Tcen^0\)-measurable map is independent of the choice of law.

\medskip\noindent
\emph{Step 2: Empirical limits and the conditional law.}
Fix \(\mu\in\mathscr C_{\mathrm{pf}}\) and write
\(\mu=\int\mu^p\,\rho(dp)\) by
Theorem~\ref{thm:path-forest-Bauer-face}. The preceding lemma gives
\begin{equation}\label{eq:common-tail-map-fibre-support}
 \mu^p\{\mathbf P^0=p\}=1\qquad(p\in\King),
\end{equation}
as well as the dust and profile limits. The convergence events are Borel
in \((p,\omega)\): coordinate convergence is a countable condition,
\(p\mapsto p_0\) is Borel, and weak convergence on \([0,1]\) is
determined by integer moments. Their sections have \(\mu^p\)-probability
one for every \(p\), so the limits hold almost surely under the joint law
\(\rho(dp)\mu^p(d\omega)\). Substituting
\(p=\mathbf P^0(\omega)\) gives \eqref{eq:tail-empirical-limits}
under the path marginal \(\mu\). The Martin convergence follows from
Theorem~\ref{thm:path-forest-Kingman-boundary}.

For a Borel path event \(A\) and a Borel \(B\subseteq\King\), the
fibre identity \eqref{eq:common-tail-map-fibre-support} gives
\begin{equation}\label{eq:disintegration-fibre-identity}
 \begin{aligned}
 \mu(A\cap\{\mathbf P_\mu\in B\})
   &=\int_{\King}\mu^p(A\cap\{\mathbf P^0\in B\})\,\rho(dp)\\
   &=\int_B\mu^p(A)\,\rho(dp).
 \end{aligned}
\end{equation}
Taking \(A=\mathcal X_\Gamma\) shows that
\(\Theta_\mu=\rho\). Since \(p\mapsto\mu^p\) is a Borel kernel,
\eqref{eq:disintegration-fibre-identity} proves the conditional-law assertion.

\medskip\noindent
\emph{Step 3: Identification of the central tail.}
The finite bridges identify the conditional law given the tail. Fix a
weighted prefix \(\gamma\) ending at \(P_\kappa\), where
\(\kappa\vdash r\), and let \(N\ge r+1\). Apply
\eqref{eq:prefix-uniformity-followup} at size \(N\): each prefix
ending at \(t\in\T_N\) has conditional probability
\(\mathbf1_{\{X_N=t\}}/u(t)\) given \(\mathscr G_N^0\).
There are \(n_{\mathrm{pf}}(\kappa;\lambda)\) such prefixes to
\(P_\lambda\) extending \(\gamma\). Summing over these prefixes and
\(\lambda\vdash N-1\) gives, \(\mu\)-almost surely,
\begin{equation}\label{eq:finite-bridge-tail-conditional}
 \E_\mu[\mathbf1_{C_\gamma}\mid\mathscr G_N^0]
 =\frac{n_{\mathrm{pf}}(\kappa;\lambda(X_N))}{u(X_N)}
 =\frac{K^{\mathrm{pf}}_\kappa(\lambda(X_N))}{u(P_\kappa)}.
\end{equation}
The right side is set to zero when \(X_N\) is not path-forest.
These variables lie in \([0,1]\). The endpoint Martin convergence and
\eqref{eq:path-forest-kernel-paintbox-limit} show that they converge
almost surely and in \(L^1(\mu)\) to
\(\chi_{\mathbf P_\mu}(\widetilde m_\kappa)
 =\mu^{\mathbf P_\mu}(C_\gamma)\).
The fields \(\mathscr G_N^0\) decrease to \(\Tcen^0\), so the
reverse-martingale convergence theorem yields
\(\E_\mu[\mathbf1_{C_\gamma}\mid\Tcen^0]
 =\mu^{\mathbf P_\mu}(C_\gamma)\).
Cylinders with a non-path-forest endpoint have zero probability under
\(\mu\) and every \(\mu^p\). The prefix cylinders, together with the empty set, form a generating
\(\pi\)-system. A monotone-class argument therefore extends the identity
to every Borel path event \(A\):
\begin{equation}\label{eq:raw-tail-conditional-kernel}
 \E_\mu[\mathbf1_A\mid\Tcen^0]=\mu^{\mathbf P_\mu}(A)
 \quad\mu\text{-a.s.}
\end{equation}
Since \(\mathbf P^0\) is raw-tail measurable,
\(\sigma(\mathbf P_\mu)\subseteq\Tcen^0\). For \(A\in\Tcen^0\),
\eqref{eq:raw-tail-conditional-kernel} gives
\(\mathbf1_A=\mu^{\mathbf P_\mu}(A)\) almost surely. Thus \(A\)
differs by a null set from the \(\sigma(\mathbf P_\mu)\)-measurable
event \(\{\mu^{\mathbf P_\mu}(A)>1/2\}\). Completing the two fields
proves their equality in \eqref{eq:conditional-tail-law}.

\medskip\noindent
\emph{Step 4: Uniqueness of the decomposition into extreme measures.}
Let \(\Upsilon\) be a representing probability measure for \(\mu\)
concentrated on the extreme points of \(\mathscr C(\Gamma)\).
The exposing functional \eqref{eq:exposing-functional} satisfies
\[
 0=\mathscr D(\mu)=\int_{\mathscr C(\Gamma)}
                         \mathscr D(\nu)\,\Upsilon(d\nu).
\]
Nonnegativity forces \(\Upsilon(\mathscr C_{\mathrm{pf}})=1\).
The extreme points of a face are precisely the extreme points of the
containing set that lie in the face. Therefore \(\Upsilon\) is
concentrated on \(\{\mu^p:p\in\King\}\). Pulling it back under the
homeomorphism \(p\mapsto\mu^p\) and using injectivity of
\(\mathsf B\) in \eqref{eq:Kingman-barycentre-map} gives the unique
measure \(\Theta_\mu\). By the face property and the Bauer representation,
\(\mu\) is extreme in \(\mathscr C(\Gamma)\) precisely when
\(\Theta_\mu\) is a point mass, equivalently when \(\mathbf P_\mu\)
is almost surely constant.
\end{proof}

\section{Finite-mass characters and Hopf convolution}
\label{sec:finite-mass-convolution}

The grading extends the normalized classification to arbitrary total mass. To describe the resulting parameter space, let
\[
 \mathfrak K=\{(c,q):c\ge0,\ q_1\ge q_2\ge\cdots\ge0,
                         \ \sum_iq_i\le c\},
\]
with the product topology in the total mass \(c\) and the ranked atom
coordinates. For \((c,q)\in\mathfrak K\), let
\(\operatorname{ev}_{c,q}:\Lambda\to\R\) be the unital algebra
homomorphism determined by
\[
 \operatorname{ev}_{c,q}(\mathsf p_1)=c,\qquad
 \operatorname{ev}_{c,q}(\mathsf p_k)=\sum_iq_i^k\quad(k\ge2),
\]
and put \(\Psi_{c,q}=\operatorname{ev}_{c,q}\circ\mathsf Q\). Its
dust mass is \(d=c-\sum_iq_i\).

\begin{theorem}[Finite mass and convolution]
\label{thm:positive-character-convolution}
The map \((c,q)\mapsto\Psi_{c,q}\) is a homeomorphism from
\(\mathfrak K\) onto the unital basis-positive characters, with pointwise
convergence on the tree basis. The parameter \((0,0)\) gives the counit;
for \(c>0\),
\begin{equation}\label{eq:finite-mass-scaling}
 \Psi_{c,q}(F_t)=c^{|t|-1}\psi_p(F_t),\qquad p_i=q_i/c.
\end{equation}
The zero map is the only nonunital basis-positive character.

The unital basis-positive characters form a commutative topological monoid
under Hopf convolution. Let \(q\sqcup r\) be the decreasing enumeration
of the multiset union of their positive entries, with zero padding when
this union is finite. Then
\begin{equation}\label{eq:positive-character-convolution}
 \Psi_{c,q}\star\Psi_{c',r}=\Psi_{c+c',\,q\sqcup r}.
\end{equation}
Equivalently, convolution takes the multiset union of the positive atom
masses, while the dust masses add.
\end{theorem}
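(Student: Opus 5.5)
The plan is to reduce everything to the normalized classification (Theorem~\ref{thm:positive-GL-characters}) by means of the grading, and then to compute convolution on the symmetric-function side through \(\mathsf Q\).

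\emph{Step 1: nonunital and unital characters.} First dispose of the nonunital case. As noted in Subsection~\ref{subsec:GL-product}, \(\psi(F_\one)\) is idempotent. If it is zero, then \(\psi=0\).

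Now let \(\psi\) be a unital basis-positive character and put \(c=\psi(F_\tau)\ge0\). Suppose \(c=0\). Then \(\psi(F_\tau^m)=0\) for all \(m\ge1\). By \eqref{eq:iterated-Pieri}, and since \(u(t)>0\), this gives \(\psi(F_t)=0\) for every \(t\ne\one\). Hence \(\psi=\varepsilon=\Psi_{0,0}\). (Note that \(\mathfrak K\) forces \(q=0\) when \(c=0\), and \(\operatorname{ev}_{0,0}\) kills all positive-degree power sums.)

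Suppose \(c>0\). The rescaled functional \(\psi'(F_t)=c^{1-|t|}\psi(F_t)\) is still multiplicative, because the product is graded by \(|t|-1\). It is also basis-positive and normalized. Theorem~\ref{thm:positive-GL-characters} therefore gives a unique \(p\in\King\) with \(\psi'=\psi_p\).

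Next check \(\Psi_{c,q}\) with \(q=cp\). Both \(\operatorname{ev}_{c,q}\) and \(f\mapsto c^{\deg f}\chi_p(f)\) are algebra morphisms agreeing on every \(\mathsf p_k\), since \(c^k S_k(p)=\sum q_i^k\). They are therefore equal, which proves \eqref{eq:finite-mass-scaling}. Bijectivity of \((c,q)\mapsto\Psi_{c,q}\) then follows from the uniqueness of \(c=\psi(F_\tau)\) and of \(p\).

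\emph{Step 2: topology.} Continuity holds because each coordinate \(\Psi_{c,q}(F_{P_\lambda})\) is a polynomial in \(c\) and the power sums \(\sum q_i^k\). The latter are continuous on \(\mathfrak K\) by the tail estimate \eqref{eq:Kingman-uniform-moment-tail}, rescaled, which is uniform on the set where \(c\) is bounded. For the inverse, \(c\) is recovered as the coordinate \(\Psi(F_\tau)\), and \(\sum_i q_i^k=\Psi(F_{C_{k+1}})\). So convergence of characters gives convergence of \(c\) and of all power sums.

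To conclude convergence in \(\mathfrak K\), use local compactness: \(\{c\le C\}\) is compact. Any subsequential limit then has the same power sums and the same \(c\), so it coincides with the limit point by the separation argument of Lemma~\ref{lem:Kingman-simplex-continuity}, applied to \(\nu\) rescaled by \(c\). The step I expect to need the most care is the case of limits with \(c\to0\), where \(p=q/c\) is undefined. Working directly with \((c,q)\) and the bound \(q_1\le c\) avoids this.

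\emph{Step 3: convolution.} Because \(\mathsf Q\) is a Hopf morphism (Proposition~\ref{prop:path-forest-hopf-quotient}), we have \(\Psi_{c,q}\star\Psi_{c',r}=(\operatorname{ev}_{c,q}\star\operatorname{ev}_{c',r})\circ\mathsf Q\). On \(\Lambda\), the convolution of two algebra morphisms is again an algebra morphism, because \(\Lambda\) is commutative and cocommutative. It is determined on primitives: \(\Delta\mathsf p_k=\mathsf p_k\otimes1+1\otimes\mathsf p_k\), so the values add. This gives the value \(c+c'\) at \(\mathsf p_1\), and \(\sum q_i^k+\sum r_i^k=\sum (q\sqcup r)_i^k\) for \(k\ge2\), which is \eqref{eq:positive-character-convolution}. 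The constraint \(\sum(q\sqcup r)_i\le c+c'\) holds, and dust masses add.

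Commutativity and associativity follow from the formula, and the identity is \(\varepsilon=\Psi_{0,0}\). Continuity of convolution follows because each coordinate of \(\Psi\star\Psi'\) is a finite sum, read off from \eqref{eq:GL-coproduct}, of products of coordinates.
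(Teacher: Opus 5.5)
Your proposal is correct and follows essentially the same route as the paper. Both proofs dispose of \(c=0\) via \eqref{eq:iterated-Pieri} and rescale by \(c^{|t|-1}\) to reduce to Theorem~\ref{thm:positive-GL-characters}; both get continuity from the rescaled tail bound and the inverse from compactness of \(\{c\le C\}\), and both compute convolution through the Hopf morphism \(\mathsf Q\) using primitivity of the \(\mathsf p_k\). One small correction: the convolution of two algebra morphisms \(\Lambda\to\R\) is multiplicative because \(\Delta\) is an algebra morphism and the target \(\R\) is commutative, not because \(\Lambda\) is commutative and cocommutative.
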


\begin{proof}
Every nonzero character is unital. Let \(\psi\) be unital and
basis-positive, and put \(c=\psi(F_\tau)\ge0\). For \(m\ge1\),
\eqref{eq:iterated-Pieri} gives
\begin{equation}\label{eq:finite-mass-basis-bound}
 c^m=\sum_{t\in\T_{m+1}}u(t)\psi(F_t),
 \qquad
 0\le\psi(F_t)\le\frac{c^m}{u(t)}
       \quad(t\in\T_{m+1}).
\end{equation}
If \(c=0\), \eqref{eq:finite-mass-basis-bound} forces all
positive-degree values to vanish, so \(\psi=\varepsilon\).

For \(c>0\), define the linear functional \(\widehat\psi\) by
\(\widehat\psi(F_t)=c^{-(|t|-1)}\psi(F_t)\). The grading of the
product gives
\[
 \widehat\psi(F_sF_t)
 =c^{-(|s|-1)-(|t|-1)}\psi(F_sF_t)
 =\widehat\psi(F_s)\widehat\psi(F_t).
\]
Thus \(\widehat\psi\) is a normalized basis-positive character.
Theorem~\ref{thm:positive-GL-characters} yields a unique \(p\in\King\)
with \(\widehat\psi=\psi_p\). Set \(q_i=cp_i\). Conversely, by
homogeneity of \(\widetilde m_\lambda\), the specialization
\(\operatorname{ev}_{c,q}\) satisfies \eqref{eq:finite-mass-scaling}.
For \(c=0\), the defining inequalities force \(q=0\), and the
specialization is the counit. This proves existence, positivity, and
uniqueness for all parameters.

For continuity, on a bounded mass set \(c\le C\), the ordering of the
atom masses gives
\begin{equation}\label{eq:finite-mass-moment-tail}
 \sum_{i>L}q_i^k
 \le q_{L+1}^{k-1}\sum_{i>L}q_i
 \le\frac{C^k}{(L+1)^{k-1}},\qquad k\ge2.
\end{equation}
Every convergent parameter sequence has bounded total mass, so this uniform
tail estimate makes all moments, and hence all character values, continuous
on \(\mathfrak K\). For fixed \(C\), the set
\(\{(c,q)\in\mathfrak K:c\le C\}\) is compact: the ordering and
finite partial-sum inequalities make it closed in
\([0,C]^{\{0\}\cup\N}\). The parametrization is a homeomorphism on
each such set. A convergent sequence of characters has bounded values at
\(F_\tau\), so its parameters and the parameter of its limit lie in
one such set and therefore converge. Both spaces are metrizable, proving
continuity of the inverse.

The convolution of two unital characters is a unital character because
\(\Delta\) is an algebra morphism and the scalar target is commutative.
The coproduct \eqref{eq:GL-coproduct} has nonnegative coefficients and
is cocommutative, so convolution preserves basis positivity and is
commutative. Since \(\mathsf Q\) is a Hopf morphism and
\[
 \Delta\mathsf p_k=\mathsf p_k\otimes1+1\otimes\mathsf p_k,
 \qquad k\ge1,
\]
convolution adds all power-sum values. The parameter on the right of
\eqref{eq:positive-character-convolution} has the same values, so the
characters agree. Its dust mass is
\((c-\sum_iq_i)+(c'-\sum_ir_i)\). Finally, for every fixed \(x\),
\(\Delta x\) is a finite sum of simple tensors, which proves continuity
of convolution in the pointwise topology.
\end{proof}

\begin{corollary}[The common kernel of basis-positive characters]
\label{cor:positive-character-kernel}
The common kernel of all basis-positive characters is \(\Ipf\).
\end{corollary}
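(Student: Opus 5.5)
The common kernel is the intersection of \(\ker\psi\) over all basis-positive characters \(\psi\). I would prove the two inclusions separately.

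The inclusion \(\Ipf\subseteq\bigcap\ker\psi\) is immediate from Theorem~\ref{thm:GL-vanishing}. That theorem covers every basis-positive character, including the zero map and the counit \(\varepsilon\), which vanishes in positive degree.

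For the reverse inclusion, take \(x\in\GL\) annihilated by every basis-positive character. I would show \(\mathsf Q(x)=0\); since \(\ker\mathsf Q=\Ipf\), this finishes the proof. By Theorem~\ref{thm:positive-character-convolution}, the characters \(\Psi_{c,q}=\operatorname{ev}_{c,q}\circ\mathsf Q\) with \((c,q)\in\mathfrak K\) are all basis-positive. So \(f=\mathsf Q(x)\in\Lambda\) satisfies \(\operatorname{ev}_{c,q}(f)=0\) for every \((c,q)\in\mathfrak K\).

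Write \(f\) as a polynomial \(g(\mathsf p_1,\ldots,\mathsf p_d)\) and show \(g=0\). This is the step needing care, and it mirrors the Jacobian argument of Proposition~\ref{prop:normalized-positive-character-kernel-completion}, now with \(\mathsf p_1\) free as well.

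\begin{enumerate}[label=(\arabic*)]
\item Choose finitely supported parameters \(q=(x_1,\ldots,x_{d-1},0,\ldots)\) with \(x_1>\cdots>x_{d-1}>0\).
\item Take any total mass \(c>\sum_jx_j\), so that \((c,q)\in\mathfrak K\). These inequalities define a nonempty open set of \((c,x_1,\ldots,x_{d-1})\in\R^d\).
\item On this set, the map
\[
 (c,x)\longmapsto\left(c,\ \sum_jx_j^2,\ \ldots,\ \sum_jx_j^d\right)
\]
has triangular Jacobian. Its determinant equals the Vandermonde-type determinant of \eqref{eq:moment-map-Jacobian}, which is nonzero for distinct positive \(x_j\).
\item By the inverse function theorem, the values \((\operatorname{ev}_{c,q}(\mathsf p_k))_{k\le d}\) fill a nonempty open subset of \(\R^d\).
\item The polynomial \(g\) vanishes on that open set, hence \(g=0\).
\end{enumerate}

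The case \(d=1\) is direct: \(g(\mathsf p_1)\) vanishes at every \(c>0\), so \(g=0\). Thus \(f=0\) and \(x\in\ker\mathsf Q=\Ipf\), giving the reverse inclusion.

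As an alternative to the free-\(c\) argument, one can use the normalized kernel \(\mathcal J_1\) together with homogeneity. Split \(x\) into graded components \(x_n\), each of degree \(n\). Scaling gives \(\Psi_{c,cp}(x)=\sum_nc^n\psi_p(x_n)\), which vanishes for all \(c>0\). Hence \(\psi_p(x_n)=0\) for each \(n\) and each \(p\), so \(\mathsf Q(x_n)\in(\mathsf p_1-1)\). A nonzero multiple of \(\mathsf p_1-1\) is never homogeneous, because its top and bottom degree parts cannot cancel. Therefore \(\mathsf Q(x_n)=0\) for every \(n\), which again gives \(\mathsf Q(x)=0\).

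I would present this second argument, since it reuses Proposition~\ref{prop:normalized-positive-character-kernel-completion} and \eqref{eq:finite-mass-scaling} directly. The main obstacle is the step showing that homogeneity together with \(\mathsf Q(x_n)\in(\mathsf p_1-1)\) forces \(\mathsf Q(x_n)=0\).
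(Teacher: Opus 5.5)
Your proof is correct, and the argument you say you would present takes a genuinely different route from the paper's. The paper never passes through the normalized kernel. Given \(f=\mathsf Q(x)\ne0\), it takes \(N\) at least the largest number of parts in the monomial expansion of \(f\), so that \(f(x_1,\ldots,x_N)\) is a nonzero symmetric polynomial. Such a polynomial cannot vanish on the open positive orthant. At a ranked positive vector \(q\) with \(c=\sum_iq_i\) (no dust), \(\operatorname{ev}_{c,q}\) is ordinary evaluation, so \(\Psi_{c,q}(x)=f(q)\ne0\).

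Your preferred argument uses the grading instead. The scaling \eqref{eq:finite-mass-scaling} separates the homogeneous components \(x_n\). Proposition~\ref{prop:normalized-positive-character-kernel-completion} then puts each \(\mathsf Q(x_n)\) in \((\mathsf p_1-1)\). Comparing top and bottom degrees, which is valid because \(\Lambda\) is a domain, forces \(\mathsf Q(x_n)=0\). This already settles the step you flagged as the main obstacle.

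The paper's route is shorter and self-contained, and it needs only dust-free evaluations. It also avoids the Jacobian and inverse-function-theorem argument that underlies the normalized kernel. Your route inherits that dependency. In exchange it gives a structural fact the paper leaves implicit: \(\Ipf\) is exactly the largest graded subspace contained in \(\mathcal J_1\). In other words, the extra mass parameter \(c\) serves precisely to recover the grading that normalization discards.

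Your first, free-\(c\) Jacobian argument is also valid. It is closer in spirit to the paper, since both rest on a polynomial vanishing on an open set being zero. The difference is that it works in power-sum coordinates rather than in the variables themselves.
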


\begin{proof}
The vanishing theorem gives one inclusion. For the other, let
\(x\notin\Ipf\) and \(f=\mathsf Q(x)\ne0\). Choose \(N\ge1\) at
least the largest number of parts occurring in its monomial-basis expansion.
The monomials of distinct partition types have disjoint supports, so
\(f(x_1,\ldots,x_N)\) is a nonzero symmetric polynomial. It cannot
vanish on the open positive orthant, so some positive vector
\((q_1,\ldots,q_N)\) has \(f(q)\ne0\). Rank it and put
\(c=\sum_iq_i\). Ordinary evaluation at this vector is
\(\operatorname{ev}_{c,q}\), and \(\Psi_{c,q}(x)=f(q)\ne0\).
\end{proof}

On \(\mathfrak K\), write
\((c,q)\star(c',r)=(c+c',q\sqcup r)\) for the operation corresponding
to convolution. Let \(\mathfrak N_1\) be the Radon point measures on
\((0,\infty)\), with integer multiplicities and finite first moment
\(\int x\,\eta(dx)<\infty\). For \((c,q)\in\mathfrak K\), set
\(\eta_q=\sum_{i:q_i>0}\delta_{q_i}\). For \(n\ge2\), an \(n\)-th
convolution root of \(\Psi\) is a unital basis-positive character
\(\Xi\) such that
\(\Xi^{\star n}=\Psi\). Infinite divisibility means that such a root
exists for every integer \(n\ge2\). A convolution semigroup is a family
\((\Psi_t)_{t\ge0}\) in this monoid satisfying
\(\Psi_0=\varepsilon\) and \(\Psi_{s+t}=\Psi_s\star\Psi_t\).

\begin{theorem}[Atomic factorization, convolution roots, and semigroups]
\label{thm:positive-character-atomic-rigidity}
Let \((c,q)\in\mathfrak K\) and \(d=c-\sum_iq_i\).
\begin{enumerate}[label=(\roman*)]
\item The map \((c,q)\mapsto(d,\eta_q)\) is an isomorphism of
commutative monoids from \(\mathfrak K\) onto
\(\R_{\ge0}\times\mathfrak N_1\), with componentwise addition.
Writing \((a)=(a,0,0,\ldots)\), one has the pointwise factorization
\begin{equation}\label{eq:positive-character-atomic-factorization}
 \Psi_{c,q}=\lim_{L\to\infty}
 \left(\Psi_{d,0}\star\Psi_{q_1,(q_1)}\star\cdots
                         \star\Psi_{q_L,(q_L)}\right).
\end{equation}
\item For an integer \(n\ge2\), an \(n\)-th convolution root exists
if and only if every atom
multiplicity of \(\eta_q\) is divisible by \(n\). The root is then
unique and equals \(\Psi_{c/n,r}\), where \(\eta_r=\eta_q/n\).
\item The infinitely divisible characters are precisely the pure-dust
characters \(\Psi_{c,0}\). Every convolution semigroup is
\begin{equation}\label{eq:all-convolution-semigroups}
 \Psi_t=\Psi_{at,0},\qquad t\ge0,
\end{equation}
for a unique \(a\ge0\), and is consequently pointwise continuous.
\end{enumerate}
\end{theorem}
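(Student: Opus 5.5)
The plan is to transport everything through the monoid isomorphism in (i), so that (ii) and (iii) become elementary statements about \(\R_{\ge0}\times\mathfrak N_1\) under addition.

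\textbf{Part (i).} By Theorem~\ref{thm:positive-character-convolution}, \(\star\) on \(\mathfrak K\) is \((c,q)\star(c',r)=(c+c',q\sqcup r)\).
\begin{itemize}
\item The dust of the product is \(d+d'\), and \(\eta_{q\sqcup r}=\eta_q+\eta_r\). So the map \((c,q)\mapsto(d,\eta_q)\) is a monoid morphism. It sends the neutral element \((0,0)\) to \((0,0)\).
\item For bijectivity, the inverse sends \((d,\eta)\) to \(c=d+\int x\,\eta(dx)\) and to \(q\), the decreasing enumeration of the atoms of \(\eta\) with multiplicity.
\item The enumeration is well defined: finite first moment forces \(\eta((\varepsilon,\infty))<\infty\) for each \(\varepsilon>0\), so the atoms can be listed decreasingly with limit \(0\). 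Then \(\sum_i q_i\le c\) holds.
\end{itemize}
For the factorization, the \(L\)-fold product equals \(\Psi_{c_L,q^{(L)}}\), where \(q^{(L)}=(q_1,\dots,q_L,0,\dots)\) and \(c_L=d+\sum_{i\le L}q_i\). As \(L\to\infty\), \(c_L\to c\) and \(q^{(L)}\to q\) coordinatewise in \(\mathfrak K\). The continuity part of Theorem~\ref{thm:positive-character-convolution} then gives pointwise convergence.

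\textbf{Part (ii).} Suppose \(\Xi=\Psi_{c',r}\) is a root. Then \(n\cdot(d',\eta_r)=(d,\eta_q)\), so \(d'=d/n\) and \(\eta_r=\eta_q/n\). The second identity must hold in \(\mathfrak N_1\), so \(\eta_q/n\) has integer multiplicities; this is the divisibility condition. Uniqueness follows because both coordinates are determined, and \(c'=d/n+\int x\,\eta_q(dx)/n=c/n\). Conversely, if divisibility holds, \((d/n,\eta_q/n)\) lies in the target monoid and pulls back to a root.

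\textbf{Part (iii).}
\begin{itemize}
\item If \(\eta_q\ne0\), pick an atom with multiplicity \(k\ge1\). Taking \(n=k+1\) shows this multiplicity is not divisible by \(n\), so no \(n\)-th root exists. Hence an infinitely divisible character has \(\eta_q=0\).
\item Conversely, \(\Psi_{c,0}\) has the roots \(\Psi_{c/n,0}\) for every \(n\).
\item For a semigroup \((\Psi_t)\), each \(\Psi_t\) is an \(n\)-th power \(\Psi_{t/n}^{\star n}\) for every \(n\). So each \(\Psi_t\) is infinitely divisible, and \(\Psi_t=\Psi_{d(t),0}\).
\item The function \(d\) satisfies \(d(0)=0\), \(d(s+t)=d(s)+d(t)\) and \(d\ge0\). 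It is therefore additive and monotone, hence \(d(t)=at\) with \(a=d(1)\); no continuity hypothesis is needed.
\item Uniqueness of \(a\) is clear from the parametrization. Continuity of \(t\mapsto\Psi_{at,0}\) follows from Theorem~\ref{thm:positive-character-convolution}; concretely, the values are \((at)^{|t|-1}\psi_0(F_t)\).
\end{itemize}

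\textbf{Main obstacle.} The only delicate step is bijectivity in (i). One has to check carefully that \(\mathfrak N_1\) corresponds exactly to ranked summable sequences, with zero padding in the finite case. One also has to confirm that \(\eta_q/n\) need not itself be required to be integer-valued a priori; this is precisely the divisibility condition. Everything else is bookkeeping.
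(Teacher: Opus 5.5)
Your proposal is correct and follows essentially the same route as the paper: you transport convolution to componentwise addition on \(\R_{\ge0}\times\mathfrak N_1\), obtain the factorization from continuity of the parametrization, solve \(d=nd_n\) and \(\eta_q=n\eta_n\) for roots, and reduce semigroups to an additive, nonnegative (hence monotone, hence linear) mass function. The only step you leave implicit is that \(\eta_q\) lies in \(\mathfrak N_1\). The paper checks this directly: \(\sum_iq_i<\infty\) leaves only finitely many atoms above each positive threshold, so \(\eta_q\) is Radon with first moment \(\sum_iq_i\).
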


\begin{proof}
Since \(\sum_iq_i<\infty\), only finitely many atoms lie above any
positive threshold. Thus \(\eta_q\) is Radon and its first moment is
\(\sum_iq_i\). Conversely, for \(\eta\in\mathfrak N_1\) and
\(\varepsilon>0\),
\[
 \eta([\varepsilon,\infty))
 \le \varepsilon^{-1}\int_{(0,\infty)}x\,\eta(dx)<\infty.
\]
Hence its atoms can be listed in nonincreasing order with multiplicity,
with zero padding if there are finitely many. This gives a ranked sequence
\(q\) with \(\sum_iq_i=\int x\,\eta(dx)\); the total mass is
recovered as \(c=d+\int x\,\eta(dx)\). The convolution formula becomes
\((d,\eta)+(d',\eta')=(d+d',\eta+\eta')\), proving the monoid
isomorphism. In \eqref{eq:positive-character-atomic-factorization}, the
finite convolution has parameter
\((d+\sum_{i\le L}q_i,(q_1,\ldots,q_L,0,\ldots))\). It converges to
\((c,q)\), so the factorization follows from
Theorem~\ref{thm:positive-character-convolution}.

An \(n\)-th root in these coordinates solves
\[
 d=nd_n,\qquad \eta_q=n\eta_n.
\]
The dust equation gives \(d_n=d/n\). An integer-valued solution of the
second equation exists if and only if the stated divisibility condition holds,
and is then uniquely \(\eta_n=\eta_q/n\). Its first moment is
\(n^{-1}\sum_iq_i\), so its total mass is \(c/n\). This proves (ii).
Thus an \(n\)-th root keeps each positive atom mass fixed and divides
its multiplicity by \(n\).

Each positive atom has finite multiplicity. No positive integer is divisible
by every \(n\ge2\), so infinite divisibility forces \(\eta_q=0\).
Conversely \(\Psi_{c,0}=(\Psi_{c/n,0})^{\star n}\) for every \(n\).

Finally, for a convolution semigroup and every \(t>0\),
\(\Psi_t=(\Psi_{t/n})^{\star n}\) for all \(n\ge2\). Thus
\(\Psi_t=\Psi_{c(t),0}\), where \(c(t)=\Psi_t(F_\tau)\ge0\).
The semigroup law gives \(c(s+t)=c(s)+c(t)\). Nonnegativity makes
\(c\) nondecreasing, since \(c(t)-c(s)=c(t-s)\ge0\) for
\(t\ge s\). Let~\(a=c(1)\). Additivity gives \(c(r)=ar\) for
nonnegative rational \(r\); approximating an arbitrary \(t\) from
below and above by rational numbers and using monotonicity yields
\(c(t)=at\). This proves \eqref{eq:all-convolution-semigroups}; continuity follows
from the parameter homeomorphism.
\end{proof}

The monoid is cancellative: dust masses and the finite multiplicity at
each positive atom mass cancel separately. The inverse and idempotent
equations in these nonnegative coordinates also show that the counit is
the only invertible element and the only idempotent.

\bibliographystyle{alpha}
\bibliography{references}
\end{document}